\DeclareFontFamily{OT1}{pzc}{}
\DeclareFontShape{OT1}{pzc}{m}{it}{<-> s * [1.100] pzcmi7t}{}
\DeclareMathAlphabet{\mathpzc}{OT1}{pzc}{m}{it}
    \patchcmd{\section}{\scshape}{\large\bfseries}{}{}
    \renewcommand{\@secnumfont}{\bfseries}
\numberwithin{equation}{section}
\newtheorem{theorem}{Theorem}[section]
\newtheorem*{theorem*}{Theorem}
\newtheorem{corollary}[theorem]{Corollary}
\newtheorem{lemma}[theorem]{Lemma}
\newtheorem{proposition}[theorem]{Proposition}
\theoremstyle{definition}
\newtheorem{remark}[theorem]{Remark}
\newtheorem{example}{Example}
\def\DD{\mathpzc{D}}
\def\CC{\mathpzc{C}}
\def\MM{\mathpzc{M}}
\def\PP{\mathpzc{P}}
\def\US{\mathpzc{US}}
\def\UF{\mathpzc{UF}}
\def\hom{\mathsf{hom}}
\def\MM{\mathpzc{M}}
\def\zz{\mathbb{Z}}
\begin{document}

\title{Completions and Terminal Monads}
%started Jan 1, 2022
\author{Emmanuel Dror Farjoun}
\email{edfarjoun@gmail.com}
% started early 2022
%\email[Emmanuel Dror Farjoun]{farjoun at math.huji.ac.il}
%started Dec 2021
%{\today}
%\maketitle
%\begin{abstract}
\author{Sergei O. Ivanov} 
\email{ivanov.s.o.1986@bimsa.cn, ivanov.s.o.1986@gmail.com}
\thanks{The second named author is supported by BIMSA}

\date{\today}

\begin{abstract}
    We  consider  {\em the terminal monad} among those preserving the objects of  a subcategory  $\DD\subseteq \CC,$ and in particular preserving the image of a monad over the category   $\CC.$

    Several common monads $\CC\to \CC$ 
are  shown to be  uniquely characterized by the property of being terminal objects in the category of co-augmented endo-functors.  Once extended to infinity
    categories, this gives, for example, a complete characterization of the well-known Bousfield-Kan
    $R$-homology completion $R_\infty.$ In addition, we note that an idempotent pro-completion tower  $M_\bullet$
    can be associated with any co-augmented endo functor $M,$ whose limit $M_\infty$ is the 
    terminal monad that preserves the closure of  $Im M,$ the image  of $M,$ under finite limits.
    We conclude that  some basic properties of the homological completion tower $R_\bullet X$ of a space
    can be formulated and proved for  general monads over any category with limits, and characterized as universal.
  \end{abstract}
\maketitle

\section{Introduction and main results}

Many well-known and extremely useful constructions, mostly known as "completions", such as the profinite completion $G\to \widehat G$ or the Bousfield-Kan homology completion $X\to R_\infty X,$  are  usually  constructed directly, without  specifying what universal property, if any,  determines them up to equivalence.
Here, using a notion that we call "terminal monad," many of these are shown to be completely determined by the property  of being {\em terminal objects} in an appropriate  category of co-augmented functors $X\to F(X)$ over the given underlying category $\CC.$

Our first observation is  that  
 the category $$Fun(\CC,\CC)_{Id/-}$$ of co-augmented functors over a
category $\CC$ with limits, is itself closed under limits. Moreover,  consider any collection $D$ of objects in $\CC;$ and denote  by ${\mathcal F}ix_D(\CC),$ the subcategory  of the above functor category, consisting of functors that preserve each object of $D,$ namely with $d\to F(d)$ an equivalence for every $d\in D.$ Then 
this category, ${\mathcal F}ix_D(\CC),$  of 
co-augmented 
functors,  is also  closed under limits. In particular,  it has  a terminal object $Id\to M_D,$ which is easily shown to be a monad over $\CC.$

For a given small full subcategory $\DD\subseteq \CC,$ a  {\em construction} of the terminal monad, associated with the set of its objects,  can be  done by  re-considering the well-known   co-density monad, denoted here by $T_\DD:\CC\to \CC,$ associated with  a full subcategory  $ \DD\subseteq \CC,$ of a  category  $\CC$ which is  always assumed to be closed under limits. 

Hence, as above,  this functor $T_\DD$  can be characterized as {\em the terminal monad} on $\CC,$ among all co-augmented  functors $Id\to F$ on $\CC,$  that "preserves the objects of $\DD,$" i.e. with  $d\cong F(d)$ for all $d\in \DD.$  This  $d$ is "fixpoint" in the terminology of Adamek,  \cite{Adamekcolimits} definition 2.5, see also \cite{fixedpointnlab}.  It turns out that a terminal monad can be associated with more general,
not necessarily fully faithful, functors. In particular, we consider  terminal monads associated with a given  monad, or more generally with co-augmented endo-functors $M:\CC\to \CC.$  Notice, as in the references above, that the category of monads
over $\CC$ is also closed under all limits (---but,  in general, not under colimits.)

This allows one to characterize, by a universal property,  common constructions,  such as "completions," as  {\em terminal monads}
with respect to an appropriate subcategory.  There is an infinity-categorical extension of this 
observation \cite{Regimovmonads}. It leads, for example, to a seemingly new
characterization by a universal property of the well-known 
Bousfield-Kan homological completion $R_\infty$   as an infinity monad on topological spaces or  a simplicial sets $X.$  Compare \cite{BousfieldKan1973completion}: The completion $R_\infty$  is  shown to be {\em the terminal 
$\infty-$monad associated with the monad
$X\to R(X),$} among all co-augmented functors that preserve, up to homotopy, the essential image of $R,$
the free $R-$ module functor.  
Similar characterization of the pro-finite completion of a group, an algebraic variety, or a topological space, and other "completion" are  examples. In addition, following 
and elaborating on 
 Fakir \cite{Fakiridempotentmonads}, and Casacuberta-Frei \cite{CasacubertaFrei}, one can associate to a monad (or any co-augmented functor) a terminal {\em idempotent}  monad, i.e. a terminal localization functor, $L_M,$ projecting $\CC$ to the smallest subcategory of $\CC$ that contains the image of $M$ and is closed under limits. 
 The above definitions and constructions can be evidently dualized
to get analog ones associated with a co-monad
$N\to Id.$ In \cite{Yanovskimonadictower},
L. Yanovski constructs for  quite general $\infty-$categories a transfinite tower of co-monads 
with similar (implicit) properties, and strong transfinite convergence results.

$$$$

\subsection{Examples:}  To begin, consider some quite well-known examples. Recall that any localization (or so-called reflection)functor $L:\CC \to \CC$ is a terminal monad see \cite{CasacubertaFrei}. It is the terminal that preserves all the $L-$ local objects.
%Next, when $\DD\subset \CC$  is the subcategory of finite groups in the category of groups, then $T_{\DD}$ is the ({\em discrete!}) profinite completion functor on groups. 
Next,  the canonical set $U(X)$ of all ultra-filters on a set $X$
is a special case,  see \cite{TomLeinster}. Namely,  the monad $U$ now appears as  the terminal monad  among all co-augmented functors on sets that preserve every finite set. Another well-known example is the double dual of a vector space functor $V\to V^{**}.$ It is the terminal monad that preserves the one-dimensional spaces, or alternatively all finite-dimensional spaces. The last examples are clearly related to theorems \ref{introdoubleduald} and
\ref{introdoubledualdn} below.

It turns out, see below,  that even when  $\DD$ is  just the subcategory  spanned by  a three-element set $3=d\in\CC=Sets$ in the category of sets, then
$T_\DD=T_3(X)$ is again the  {\em underlying set } of the Stone-\v{C}ech compactification of the set $X,$ i.e.  $U(X)$ as above. Further, $T_2$ is a canonical sub-monad of the ultrafilter monad, while
$T_n$ for $n\geq 3$ is again the ultrafilter monad.
When $\DD\subseteq\CC$ is the subcategory of finite groups in the category of (discrete)
groups, then the {\em discrete} profinite completion  endo-functor, on the category of groups appears as the terminal monad among all co-augmented ones   $F,$ that preserve  all finite groups i.e.
with $\Gamma\cong F(\Gamma)$ for every finite $\Gamma.$
Or again, if $\DD\subseteq\CC$  is the subcategory of nilpotent groups in the category of groups, the associated monad $T_{\DD}$
is the (discrete) nilpotent completion functor in the category of groups.  For a ring $A,$ the (discrete) completion  functor of an  $A-$module, with respect to an ideal $I\subseteq A,$ can be similarly expressed as a terminal monad. A final, slightly stretched example, in an $\infty$-category, is the double dual as in equation \ref{eq:enddual} section \ref{operadiccompletion}  below, and Theorem \ref{operad}. This is very close to
  Mandell's functor, \cite{Mandellpadichomotopy}, see remark \ref{remarkmandel} below,
  that can be considered as the terminal monad preserving
  certain GEM spaces expressed as a double dual monad.

\subsubsection{Acknowlegements} This line of thought was a result of a private discussion with  M. Hopkins about the properties of the Bousfield-Kan
$R$-completion. Our students Guy Kapon and Shauly Regimov took
an active part in the discussion leading to the present paper. Their work led them to the corresponding formulations in the context of $\infty$-algebras, see \cite{Regimovmonads}.

\subsection{A sample of results}
The results  below regard the existence, basic properties, and explicit formulas for the terminal monad in certain cases.
Our first concern is to guarantee the existence of terminal monads
under certain rather weak conditions.
In any category, a terminal object can be considered as the limit over the empty diagram.
Hence the existence of a terminal 
co-augmented functor in a given 
functor category would follow from its closure under limits.

In  the following, the closure under limits and thus  the existence of a terminal object 
is guaranteed by the closure of the basic category 
 $\CC $ under limits.  In the present case, the functor categories, coma categories, and considered subcategories  are clearly closed under limits.  Limits 
 in the category of co-augmented  endo-functors are taken in the appropriate coma category under the identity functor.

%Terminal monads are the main objects of interest in the present work, beyond their existence we consider explicit 
%constructions, properties and examples.

The following gives a general  construction of the terminal monad in quite a general  framework, see Proposition \ref{D-completion} below.

$$$$
\begin{proposition} Let $\DD\subseteq \CC$ be a full subcategory of  a  category with limits. The co-density functor, or the  $\DD$-completion, $T_\DD:\CC\to \CC$  is the terminal object in the category of co-augmented functors $Id\to F\in \CC^{\CC},$ such that  the co-augmentation 
 map $d\to F(d)$ is an isomorphism for all $d\in \DD.$
This functor has a unique canonical monad structure.
\end{proposition}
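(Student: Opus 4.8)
The plan is to realize $T_\DD$ by the pointwise right Kan extension formula and then read off all three assertions from the universal property of the defining limits. Concretely, for $X\in\CC$ set $T_\DD(X)=\lim_{(f\colon X\to d)\in(X\downarrow\DD)}d$, the limit over the comma category of objects of $\DD$ under $X$ (this limit exists since $\CC$ has limits and the comma category is small — e.g. whenever $\DD$ is small, as in the discussion of the co-density monad above); the co-augmentation $\eta_X\colon X\to T_\DD(X)$ is the map into the limit whose component at $(f\colon X\to d)$ is $f$ itself, and functoriality in $X$ comes from the evident reindexing of comma categories along maps $X'\to X$. I would first record the \emph{fixpoint property}: for $d\in\DD$ the comma category $(d\downarrow\DD)$ has the initial object $\mathrm{id}_d$, so the limit is computed at that object and $\eta_d\colon d\to T_\DD(d)$ is an isomorphism (indeed the canonical identification). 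In particular $T_\DD$, and hence every iterate $T_\DD^{\circ n}$, lies in the subcategory ${\mathcal F}ix_{\DD}(\CC)$.

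The heart of the argument is terminality. Given any co-augmented functor $(F,\alpha\colon Id\to F)$ with $\alpha_d$ an isomorphism for all $d\in\DD$, I would build the comparison $\lambda\colon F\to T_\DD$ by specifying, for each $X$, a cone from $F(X)$ to the defining diagram of $T_\DD(X)$: its component at $(f\colon X\to d)$ is the composite $F(X)\xrightarrow{F(f)}F(d)\xrightarrow{\alpha_d^{-1}}d$. Naturality of $\alpha$ shows this is a cone (a morphism $g\colon d\to d'$ in $(X\downarrow\DD)$, i.e. $gf=f'$, intertwines the components, since $\alpha_{d'}^{-1}F(g)=g\alpha_d^{-1}$), so it induces a unique $\lambda_X\colon F(X)\to T_\DD(X)$; a second use of naturality of $\alpha$, together with the reindexing description of $T_\DD$ on morphisms, gives naturality of $\lambda$ in $X$, and $\lambda\circ\alpha=\eta$ because $\alpha_d^{-1}F(f)\alpha_X=\alpha_d^{-1}\alpha_d f=f$. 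For uniqueness, if $\mu\colon F\to T_\DD$ also satisfies $\mu\circ\alpha=\eta$, then naturality of $\mu$ along an arbitrary $f\colon X\to d$, the identification $T_\DD(d)\cong d$ (under which $\mu_d$ must equal $\alpha_d^{-1}$, forced by $\mu_d\alpha_d=\eta_d=\mathrm{id}$), and the fact that $T_\DD(f)\colon T_\DD(X)\to T_\DD(d)\cong d$ is precisely the $(f)$-component of the limit cone, force the $(f)$-component of $\mu_X$ to equal that of $\lambda_X$; since the limit projections are jointly monic, $\mu=\lambda$. I expect this bookkeeping — keeping straight the three naturality squares and the reindexing of comma categories on morphisms — to be the main (though entirely routine) obstacle.

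Finally, the monad structure, which I would extract from terminality alone. The unit is the co-augmentation $\eta\colon Id\to T_\DD$. Since $T_\DD\circ T_\DD\in{\mathcal F}ix_{\DD}(\CC)$, terminality produces a unique $\mu\colon T_\DD T_\DD\to T_\DD$ with $\mu\circ\eta^{(2)}=\eta$, where $\eta^{(2)}:=(\eta T_\DD)\circ\eta=(T_\DD\eta)\circ\eta$, the two expressions agreeing by naturality of $\eta$. The unit laws $\mu\circ(\eta T_\DD)=\mathrm{id}_{T_\DD}=\mu\circ(T_\DD\eta)$ then hold because both sides — and $\mathrm{id}_{T_\DD}$ — are endomorphisms of the terminal object $(T_\DD,\eta)$ of ${\mathcal F}ix_{\DD}(\CC)$, hence coincide; associativity $\mu\circ(\mu T_\DD)=\mu\circ(T_\DD\mu)$ holds because both composites are maps $T_\DD^{\circ 3}\to T_\DD$ compatible with a suitable co-augmentation of $T_\DD^{\circ 3}$, hence equal by terminality again — this is the usual argument that a terminal co-augmented endofunctor closed under composition is a monad. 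The same terminality argument yields uniqueness: if $\mu'$ is any multiplication making $(T_\DD,\eta,\mu')$ a monad, its left unit law exhibits $\mu'$ as a morphism $(T_\DD T_\DD,\eta^{(2)})\to(T_\DD,\eta)$ in ${\mathcal F}ix_{\DD}(\CC)$, so $\mu'=\mu$. (Alternatively, one may simply note that $T_\DD$ is the codensity monad of the inclusion $\DD\hookrightarrow\CC$ and identify the classical codensity structure with the one just described.)
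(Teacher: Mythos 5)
Your proof is correct, but it takes a more hands-on route than the paper. The paper defines $T_\DD$ abstractly as the right Kan extension $\mathsf{Ran}_I(I)$ and proves terminality directly from the Kan-extension universal property: full faithfulness of $I$ gives (by the cited corollary in Mac Lane) that the counit $\varepsilon\colon T_\DD I\to I$ is invertible, and then for any $(F,\eta^F)$ fixing $\DD$ one feeds $\varepsilon'=(\eta^F I)^{-1}$ into the universal property and translates the resulting equation $\varepsilon\circ(\delta I)=\varepsilon'$ into $\delta\circ\eta^F=\eta^T$. You instead work pointwise with the limit-over-comma-category formula (which the paper records as equation (2.4) but does not use in its proof), proving the fixpoint property via the initial object $\mathrm{id}_d$ of $(d\downarrow\DD)$ — this is where fullness of $\DD$ enters for you, playing the role of the Mac Lane corollary — and building the comparison map and its uniqueness by explicit cone manipulations and joint monicity of the projections. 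What your version buys is self-containedness and transparency (no appeal to the 2-categorical universal property or to the literature), at the cost of requiring the comma-category limits to exist, i.e.\ smallness of $\DD$ or an equivalent hypothesis, which you rightly flag and which the paper handles by simply assuming the Kan extension exists; the paper's version is shorter and hypothesis-minimal but leans on quoted results. Two small remarks: naturality of $\alpha$ is not actually needed for the naturality of $\lambda$ in $X$ (only for the cone condition and for $\lambda\circ\alpha=\eta$), and your monad-structure and uniqueness-of-$\mu$ argument via terminality of $(T_\DD,\eta)$ among co-augmented functors fixing $\DD$ is exactly the paper's argument, spelled out in slightly more detail (including the useful observation that a left unit law forces any monad multiplication with unit $\eta$ to be the canonical $\mu$).
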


\medskip

The following statements use the notation of
\ref{operad}, so they should be read with caution:
Although not treated here, they hold also in a complete monoidal category with internal
$hom(-,-)$ objects. In all cases, the  notation
$hom_O(-,-)$ should be read as the appropriate
equivariant maps, with respect to the implied action on the monoid $End$ or operad $O,$ on the range and domain. 

\begin{theorem}(See equation  \ref{eq:enddual})\label{introdoubleduald}
Let $d\in \CC$ be an element in a  category with limits. Denote by
$ End(d)$ the full subcategory generate by $d,$ namely the endomorphism of $d.$ The terminal monad $Id\to T_d$ that preserves $d,$
is given by the "structured double dual"  with respect to $d:$

$$X\to  T_d(x)=hom_{End\,(d)}(hom (X,d),d) $$
\end{theorem}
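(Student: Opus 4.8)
\emph{Proof sketch.} The plan is to recognize $T_d$ as an instance of the codensity construction of the preceding Proposition and then to compute it by hand. Write $\DD:=End(d)\subseteq\CC$ for the full subcategory spanned by the single object $d$, so that its morphisms form the monoid $M:=\hom(d,d)=End(d)$. Since $\DD$ has exactly one object, a co-augmented functor $Id\to F$ ``preserves $d$'' precisely when its co-augmentation is an isomorphism on every object of $\DD$. Hence the Proposition applies and identifies the terminal such co-augmented functor with the codensity functor $T_\DD$, equipped with its canonical monad structure; this is what ``the terminal monad that preserves $d$'' means here. It remains to produce a natural isomorphism $T_\DD(X)\cong\hom_{End(d)}(\hom(X,d),d)$ carrying the codensity co-augmentation to the structured bidual embedding.

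First I would invoke the standard presentation of the codensity functor as a limit over a comma category: $T_\DD(X)=\lim_{(X\downarrow\DD)}P$, where $(X\downarrow\DD)$ has the morphisms $f\colon X\to d$ as objects, a morphism $f\to f'$ being an $m\in M$ with $m\circ f=f'$, and where the diagram $P$ sends every object to $d$ and each such $m$ to $m\colon d\to d$; this limit exists because $\CC$ is complete. Unwinding the cone condition, a cone with vertex $L$ over $P$ is precisely a family $(p_f\colon L\to d)_{f\in\hom(X,d)}$ with $p_{m\circ f}=m\circ p_f$ for all $m\in M$ --- that is, an $End(d)$-equivariant map from the left $M$-set $\hom(X,d)$, acted on by post-composition, to $d$, acted on tautologically by its own endomorphisms. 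The universal such vertex is by definition the object $\hom_{End(d)}(\hom(X,d),d)$ of the statement (constructed concretely as an equalizer of two maps between powers of $d$), which is also the value at $X$ of the usual codensity end. This yields the displayed formula.

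Then I would match the remaining data. The unit of a codensity monad is characterized by the property that for each $f\colon X\to d$ the $f$-th leg of the limiting cone, composed with $\eta_X\colon X\to T_\DD(X)$, recovers $f$; concretely $\eta_X$ sends a generalized element $x$ of $X$ to the evaluation family $p_f:=f(x)$, which is equivariant since $p_{m\circ f}=(m\circ f)(x)=m\circ p_f$. Thus $\eta$ is the ``structured double dual'' embedding asserted in the theorem, and the monad multiplication is the canonical one supplied by the Proposition. As a consistency check --- also guaranteed abstractly by the fixpoint clause of the Proposition --- taking $X=d$ and evaluating the legs at $\mathrm{id}_d$ identifies $\hom_{End(d)}(\hom(d,d),d)$ with $d$ and $\eta_d$ with the identity, so that $d\to T_d(d)$ is an isomorphism.

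The genuinely delicate point is not existence or the universal property, which are imported wholesale from the Proposition, but the bookkeeping in the middle step: one must keep straight that $M=End(d)$ acts on the indexing hom-set $\hom(X,d)$ by \emph{post}-composition while acting on the target $d$ tautologically, so that the comma-category limit gets identified with the correct equivariant-hom object rather than a variant of it. One must also read $\hom(X,d)$ strictly as an external hom-set in the stated generality; the internal-hom, monoidal version flagged before the theorem is then recovered by systematically replacing the external powers $d^{(-)}$ and the equivariant-hom by their internal analogues throughout.
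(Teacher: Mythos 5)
Your proposal is correct and takes essentially the paper's own route: terminality and the monad structure are imported from Proposition \ref{D-completion}, and the formula is obtained by computing the pointwise right Kan extension and identifying it with the equalizer of $d^{\CC(-,d)}\rightrightarrows (d^{\CC(-,d)})^{{\sf End}(d)}$, i.e.\ with $\hom_{{\sf End}(d)}(\CC(-,d),d)$, exactly as in Corollary \ref{cor:one_object} and equation \eqref{eq:enddual}, with the unit having components $(\tilde\eta_c)_\alpha=\alpha$. The only difference is presentational: you unwind the comma-category limit of \eqref{limitovercat} directly, whereas the paper passes through the end formula for the Kan extension and its equalizer presentation --- the same computation in equivalent packaging.
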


Further, the terminal monad that preserves an element $d\in \CC$ and all its cartesian powers $d^n,$
is given by a similar expression as below, where $\sf O={\sf O}_d$ denote the full endomorphism operad
of an object $d,$ given by all the morphisms $d^i\to d$ with $i>0.$

In the notation of \ref{introdoubleduald} one has:

\begin{theorem}\label{introdoubledualdn}

The terminal monad $Id\to T_{d^\bullet}$ that preserves $d^i,$ for all $i,$
is given by the "operadic double dual"  with respect to $d:$

$$X\to  T_d(X)=hom_{\sf O}(hom (X,d),d)
$$
\end{theorem}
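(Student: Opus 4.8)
The plan is to deduce this from Proposition~\ref{D-completion} and then compute the resulting codensity monad. Let $\DD\subseteq\CC$ be the full subcategory spanned by the objects $d^i$, $i\geq 1$. A co-augmented endofunctor $Id\to F$ has $d^i\xrightarrow{\sim}F(d^i)$ for every $i$ precisely when it preserves the objects of $\DD$, so by Proposition~\ref{D-completion} the terminal such functor is the $\DD$-completion $T_\DD$, with its canonical monad structure. The whole content of the theorem is therefore the natural isomorphism $T_\DD(X)\cong hom_{\sf O}(hom(X,d),d)$ carrying one co-augmentation to the other.

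First I would unwind the codensity monad: $T_\DD(X)$ is the limit of the canonical diagram $(X\downarrow\DD)\to\DD\hookrightarrow\CC$, whose objects are pairs $(i,\varphi\colon X\to d^i)$ and whose morphisms $(i,\varphi)\to(j,\psi)$ are the maps $\alpha\colon d^i\to d^j$ in $\CC$ with $\alpha\varphi=\psi$. Using the bijections $hom(X,d^i)\cong hom(X,d)^i$ and $hom(d^i,d^j)\cong{\sf O}(i)^j$, an object of $(X\downarrow\DD)$ is an $i$-tuple $(f_1,\dots,f_i)$ of elements of $hom(X,d)$ and a morphism of it into $(g_1,\dots,g_j)$ is a $j$-tuple $(\theta_1,\dots,\theta_j)$ with $\theta_l\in{\sf O}(i)$ and $g_l=\theta_l\circ(f_1,\dots,f_i)$; that is, $(X\downarrow\DD)$ records the full clone of $d$ acting on $hom(X,d)$, and the projections, diagonals and permutations $d^i\to d^j$ present in $\DD$ — together with the operations $d^i\to d$ — realise all of the symmetric-operad structure of ${\sf O}={\sf O}_d$.

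Next I would read off the limit. A cone with vertex $C$ is a family $\psi_\varphi\colon C\to d^i$ natural in $(i,\varphi)$; naturality against the projections $\pi_k\colon(i,\varphi)\to(1,\pi_k\varphi)$ forces $\psi_\varphi=(\psi_{f_1},\dots,\psi_{f_i})$ with $f_k=\pi_k\varphi$, so the cone is determined by the family $(\psi_f)_{f\in hom(X,d)}$ of maps $C\to d$, and naturality against the remaining morphisms says exactly that $f\mapsto\psi_f$ is a morphism of ${\sf O}$-algebras $hom(X,d)\to hom(C,d)$, both sides carrying the pointwise action $\theta\cdot(h_1,\dots,h_n)=\theta\circ(h_1,\dots,h_n)$, $\theta\in{\sf O}(n)$ — the action making $d$ the tautological ${\sf O}$-algebra. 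Currying (legitimate since $hom(C,-)$ commutes with the limits that cut out $hom_{\sf O}(-,-)$ and with the pointwise ${\sf O}$-structure) turns such equivariant families into maps $C\to hom_{\sf O}(hom(X,d),d)$, so this object corepresents cones, hence is the limit $T_\DD(X)$, naturally in $X$; the limit cone becomes evaluation and the co-augmentation $X\to T_\DD(X)$ becomes $x\mapsto\mathrm{ev}_x$, which is the co-augmentation in the statement. Theorem~\ref{introdoubleduald} is the special case $\DD=\{d\}$, and in a complete closed monoidal $\CC$ the same argument applies with $hom(-,-)$ the internal hom, the only addition being that $hom_{\sf O}(-,-)$ is then the appropriate equalizer in $\CC$, which exists by completeness.

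I expect the main obstacle to be the middle step: verifying that the morphisms of $(X\downarrow\DD)$ encode \emph{precisely} the symmetric-operad structure of ${\sf O}_d$ — enough of it that a cone is genuinely an ${\sf O}$-algebra map (unit, $\Sigma_n$-equivariance and operadic composition all accounted for), and no more, so that no spurious relations are imposed. Pinning down the degenerate cases — when $d$ is the terminal object, so $\DD$ collapses, or when $hom(X,d)$ is empty, so $(X\downarrow\DD)$ is empty and the limit is the terminal object of $\CC$, matched on the other side by $hom_{\sf O}(\emptyset,d)$ being terminal — and confirming that the projections really do force a cone to be determined by its unary part, is where the bookkeeping needs the most care; the remaining naturality verifications are routine.
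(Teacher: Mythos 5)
Your proof is correct, but it takes a genuinely different route from the one in the paper. You identify the terminal monad with the codensity monad $T_\DD$ for $\DD=\{d^i\}_{i\geq 1}$ via Proposition \ref{D-completion} and then compute the limit over the comma category $X\downarrow\DD$ (equation \eqref{limitovercat}) directly, by showing that the projections $d^i\to d$ force a cone to be determined by its unary components and that the remaining naturality conditions cut out exactly the operadic equalizer; your reduction of the general compatibility condition for $\alpha:d^i\to d^j$ to the case $j=1$ is the key point and it does go through, so no spurious relations arise. The paper instead proves this as Theorem \ref{th:powers}: it never computes the comma-category limit, but verifies the terminal universal property directly against an arbitrary co-augmented functor $F$ fixing $d^n$, by factoring the unique map $F\to\hom_{{\sf End}(d)}(\CC(-,d),d)$ (the one-object case, Corollary \ref{cor:one_object}) through the operadic equalizer via an explicit component criterion (Lemma \ref{lemma:phi_equal}) and the retract trick $d\in{\sf Inv}(F)$, establishing $d^n\cong\hom^{\leq n}$ on $d^n$ separately (Lemma \ref{lemma:equalizer}) and only then passing to $\varprojlim_n$. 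Your argument is shorter and more conceptual once the comma category is correctly unwound; the paper's yields the finite-level identifications $T_{d^n}\cong\hom^{\leq n}_{{\sf O}^+(d)}(\CC(-,d),d)$ as intermediate results and isolates more carefully the role of the nullary operations --- the distinction between ${\sf O}^+(d)$ and ${\sf O}(d)$, i.e.\ whether $d^0=1$ is required to be preserved (see the Remark after Lemma \ref{lemma:n'}) --- which your choice of $i\geq 1$ handles correctly for the statement as given but does not discuss.
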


$$$$

\subsubsection*{Terminal monads associated to a given monad $M$}
For a given monad $Id\to M:\CC\to \CC,$ ( or, more generally, a co-augmented endo-functor,) one has an associated {\em terminal monad } $T_M:\CC\to \CC,$ which is the terminal endo-functor
among all those that preserve the image of $M,$
namely with $M\to FM$ an isomorphism.
As an  example,  of such a monad $M$ one can take
any of the monads discussed above or even the terminal monad $T_\DD$  as above.  The terminal monad associated with the (discrete) profinite completion  functor, $G\to M(G)= \widehat G\equiv proG$ is a functor $T_{pro},$ that preserves all groups of the form $\widehat G,$  i.e. groups that are the {\em discrete} profinite completion of  some group $G.$  Next, if $U$ is the ultrafilter monad discussed above then its associated terminal monad can be seen to be the identity
monad, $T_U=Id,$  which is clearly the only monad that preserves all possible sets of the form $U(X)$, since the latter have arbitrarily high cardinality.
$$$$
The terminal monad  associated to $M$ can be expressed explicitly as follows: Compare  
  \cite{Fakiridempotentmonads}:

\begin{theorem} 
Let $M$ be a monad on a  category $\CC.$ The  associated terminal monad 
is given as the equalizer

$$
T_M \cong Equal (M\rightrightarrows M^2).
$$

\end{theorem}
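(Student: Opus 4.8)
The plan is to show that the pointwise equalizer $E := \mathrm{Equal}(\eta M, M\eta \colon M \rightrightarrows M^2)$ — where $\eta\colon Id\Rightarrow M$ and $\mu\colon M^2\Rightarrow M$ are the unit and multiplication of $M$, and the two arrows are the right and left whiskerings of $\eta$, with $X$-components $\eta_{MX}$ and $M(\eta_X)$ (the only pair of natural transformations $M\Rightarrow M^2$ supplied by the monad structure) — is a co-augmented endofunctor that fixes every object in the image of $M$ and is \emph{terminal} among such. Since the terminal object of the category of co-augmented endofunctors fixing $Im\,M$ is by definition the terminal monad $T_M$ associated with $M$ (and carries its canonical monad structure by the general fact recalled in the introduction), this yields $T_M\cong E$. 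First note that $E$ is well defined: $\CC$ has equalizers, so $E(X)$ exists and is functorial in $X$, being the pointwise equalizer of a parallel pair of natural transformations between endofunctors; and $\eta$ itself factors through the equalizer inclusion $\iota\colon E\hookrightarrow M$, because $\eta_{MX}\circ\eta_X = M(\eta_X)\circ\eta_X$ is naturality of $\eta$ at $\eta_X$. This gives a co-augmentation $\bar\eta\colon Id\Rightarrow E$ with $\iota\bar\eta=\eta$.

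Next I would check that $E$ fixes $Im\,M$. The object $E(MX)$ is the equalizer of $\eta_{M^2X}$ and $M(\eta_{MX})$ as maps $M^2X\rightrightarrows M^3X$. Composing with the retraction $M(\mu_X)\colon M^3X\to M^2X$, and using naturality of $\eta$ at $\mu_X$ together with the unit law $\mu_X\circ\eta_{MX}=\mathrm{id}$, one finds that a morphism $f$ into $M^2X$ equalizes the pair exactly when $f = \eta_{MX}\circ(\mu_X\circ f)$, i.e. exactly when it factors through the split monomorphism $\eta_{MX}\colon MX\hookrightarrow M^2X$; that every such factorization does equalize the pair is again naturality of $\eta$, now at $\eta_{MX}$. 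Hence the equalizer inclusion $\iota_{MX}$ is, up to a canonical isomorphism, $\eta_{MX}$ itself, so $E(MX)\cong MX$; comparing with $\iota_{MX}\bar\eta_{MX}=\eta_{MX}$ and cancelling the monomorphism $\iota_{MX}$ identifies $\bar\eta_{MX}$ with this isomorphism. Therefore $\bar\eta M\colon M\Rightarrow EM$ is a natural isomorphism and $E$ lies in the category in question.

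For terminality, let $(F,\alpha)$ be a co-augmented endofunctor with $\alpha M\colon M\Rightarrow FM$ a natural isomorphism; this also forces each $\alpha_{M^2X}$ to be invertible, since $M^2X = M(MX)\in Im\,M$. Set $\rho := (\alpha M)^{-1}\circ F\eta\colon F\Rightarrow M$; naturality of $\alpha$ at $\eta_X$ gives $\rho\alpha=\eta$. The crux is to verify that $\rho$ equalizes $\eta M$ and $M\eta$. On one side, naturality of $\alpha$ at $\eta_{MX}$, with $\alpha_{MX}$ and $\alpha_{M^2X}$ invertible, rewrites $\eta_{MX}\circ(\alpha_{MX})^{-1} = (\alpha_{M^2X})^{-1}\circ F(\eta_{MX})$, whence $\eta M\circ\rho = (\alpha M^2)^{-1}\circ F(\eta M\circ\eta)$. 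On the other side, naturality of $\rho$ at $\eta_X$ gives $M\eta\circ\rho = \rho M\circ F\eta$, and since $\rho M = (\alpha M^2)^{-1}\circ F(\eta M)$, also $M\eta\circ\rho = (\alpha M^2)^{-1}\circ F(\eta M\circ\eta)$. So the two composites agree, $\rho$ factors uniquely as $\rho=\iota\circ\beta$ for some $\beta\colon F\Rightarrow E$ (pointwise universal property of the equalizer; naturality of $\beta$ follows since $\iota$ is mono), and $\iota(\beta\alpha)=\rho\alpha=\eta=\iota\bar\eta$ with $\iota$ mono gives $\beta\alpha=\bar\eta$, so $\beta$ is a morphism of co-augmented functors. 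For uniqueness I would argue that $\rho$ is the \emph{only} natural transformation $\gamma\colon F\Rightarrow M$ with $\gamma\alpha=\eta$: evaluating $\gamma\alpha=\eta$ at $MX$ and using invertibility of $\alpha_{MX}$ forces $\gamma_{MX} = \eta_{MX}\circ(\alpha_{MX})^{-1}$, whence naturality of $\gamma$ at $\eta_X$ yields $M(\eta_X)\circ\gamma_X = \gamma_{MX}\circ F(\eta_X) = \eta_{MX}\circ\rho_X = M(\eta_X)\circ\rho_X$, and $M(\eta_X)$ being a split monomorphism gives $\gamma=\rho$. Applying this to $\gamma=\iota\beta'$ for any competing $\beta'\colon F\Rightarrow E$ over $Id$ and cancelling $\iota$ shows $\beta'=\beta$.

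I expect the computation in the third paragraph — that $\rho$ equalizes $\eta M$ and $M\eta$ — to be the only substantive point, and the one place where the hypothesis that $F$ fixes $Im\,M$ is genuinely used (and used at the second power $M^2X$, not merely at $MX$); it amounts to keeping careful track of left and right whiskerings of $\eta$ and $\alpha$. Everything else is formal manipulation with equalizers, split monomorphisms, and naturality, and the monad structure on $T_M\cong E$ is inherited from the general principle, recalled above, that the terminal object of a limit-closed category of co-augmented endofunctors is canonically a monad.
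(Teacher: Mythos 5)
Your proof is correct, and it reaches the identification $T_M\cong \mathrm{Eq}(M\rightrightarrows M^2)$ by a route that differs from the paper's mainly in how terminality is established. The shared ingredients are the comparison map from an arbitrary co-augmented $F$ fixing the image of $M$ into the equalizer (the paper's Lemma \ref{maptoequalizer}; your $\rho=(\alpha M)^{-1}\circ F\eta$, which in both cases tacitly needs invertibility of the co-augmentation at $M^2X$ as well as at $MX$ — you make this explicit, the paper leaves it implicit) and the fact that the equalizer itself fixes $Im\,M$. For the latter the paper invokes the split cosimplicial object $M^\bullet(M)$ with its extra codegeneracy and the identification of its totalization with the equalizer in a $1$-category, whereas you carry out the same splitting by hand via the retraction $M(\mu_X)$, showing that the equalizer of the pair at $MX$ is literally $\eta_{MX}\colon MX\to M^2X$ — the same fact, in more elementary form. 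Where you genuinely diverge is the endgame: the paper presupposes the existence of the terminal object $T_M$ (limit-closure of the relevant functor category), produces maps $Eq_M\to T_M\to Eq_M$, and checks that both composites are identities (one by terminality, the other by a diagram chase), while you verify directly that the equalizer satisfies the universal property, supplying a uniqueness argument via the split monomorphism $M(\eta_X)$ that has no counterpart in the paper's proof. Your version buys a self-contained argument that does not rely on the prior existence of $T_M$ and pinpoints exactly where the monad structure (unit laws and $\mu$) is used; the paper's version is shorter given its earlier results and is cast in the totalization language that points toward the $\infty$-categorical generalization, where a strict equalizer must anyway be replaced by $\mathrm{tot}\,M^\bullet$.
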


In addition, we may consider, following Fakir above, the category of  idempotent monads, which are often called localizations or reflections.  Fakir constructs  for every monad  $M$ a naturally associated  idempotent monad K(M). Casacuberta et el 
observed  in \cite{CasacubertaFrei} that this idempotent monad is  {\em terminal} among all idempotent monads   $F, F\cong F^2,$ with the property  $M(f)$ is an isomorphism if and only if $K(M)(f)$
is one. Note the difference between $T_M$
and $K(M).$ The latter is discussed shortly in the last section below. The $\infty$-category analog is clearly the totalization of the co-simplicial
monad $M^\bullet$ discussed in \cite{Regimovmonads}.

$$$$
\subsection{Outline of the rest of the paper}

We begin with recalling  the general concept of completion i.e. co-density  with respect to a subcategory, such as the subcategory of compact objects.
This is done by considering the right Kan extension of a subcategory over itself. This  gives many   known examples of terminal monads. We then consider the terminal monad associated with a given object in a category and one associated with a given monad. The last example gives a functor from monads to terminal monads on the category $\CC.$  The paper goes on to consider some known special cases such as the category of sets and groups where the general construction gives some well-known constructions as a terminal monad, this
characterizes them uniquely by a property. The last section deals with the pro-idempotent monad associated with a co-augmented endo-functor, vastly generalizing the classical Bousfield-Kan completion tower $R_\bullet,$ here  only for a discrete category,
but paving the way for a
similar result for an $\infty$-category.

\section{ \texorpdfstring{$\DD$}{}-Completions}

Let $\DD$ be a full subcategory of a category $\CC$. Denote by $I:\DD\to \CC$ the embedding and assume that the right Kan extension of $I$ by $I$ exists and denote it by 
\begin{equation}
   T= T_\DD={\sf Ran}_I(I) : \CC\to \CC.
\end{equation}
So by the definition of the right Kan extension, $T$ is a functor together with a natural transformation
$\varepsilon: T I  \longrightarrow I$

\begin{equation}
\begin{tikzcd}
& \CC  \ar[rd,"T"] \ar[d,Rightarrow,"\varepsilon"]
 & \\
\DD\ar[ru,"I"] \ar[rr,"I"']  & \  & \CC \end{tikzcd}
\end{equation}

such that for any functor $F:\CC\to \CC$ and any natural transformation $\varepsilon':FI \to I $ there exists a unique natural transformation $ \delta: F \to T $ such that $ \varepsilon \circ  (\delta I) = \varepsilon',$ where $\delta I: FI \to T I$ is the whiskering of $\delta$ and $I$. The equation $ \varepsilon \circ  (\delta I) = \varepsilon'$ can be rewritten as follows: for any $d\in \DD$ 
\begin{equation}
\varepsilon_d \circ \delta_d = \varepsilon'_d.    
\end{equation}
Note that the universal property implies that for two natural transformations $\delta,\delta':F\to T$ the equation $\delta I=\delta' I$ implies $\delta=\delta'.$ Thus $T$ appears already here as
a terminal functor, in a somewhat  different sense from the above. Compare \cite{Ivandeliberticodensity}.

The functor $T$ will be called the functor of $\DD$-completion. 
Any right Kan extension can be presented as a limit over a comma category \cite[Ch. X, \S 3, Th.1]{mac2013categories}. In our case, it is just the limit of the projection functor
\begin{equation}\label{limitovercat}
    T(c) = {\sf lim} (c \downarrow \DD \to \DD).
\end{equation}

\begin{lemma}
The morphism $\varepsilon: TI\to I$ is an isomorphism. In particular, for any $d\in {\sf Ob}(\DD)$ we have an isomorphism
\begin{equation}
\varepsilon_d:T(d) \cong d.
\end{equation}
\end{lemma}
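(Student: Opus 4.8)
The plan is to use the explicit presentation of $T$ as a limit over a comma category given in \eqref{limitovercat}, together with the fact that $\DD\subseteq\CC$ is \emph{full}. Fix $d\in{\sf Ob}(\DD)$. The first step is to observe that the comma category $d\downarrow\DD$ has an \emph{initial} object, namely the pair $(d,\mathrm{id}_d)$. Indeed, given any object $(d',f\colon d\to Id')$ of $d\downarrow\DD$, a morphism $(d,\mathrm{id}_d)\to(d',f)$ is by definition a morphism $g\colon d\to d'$ of $\DD$ with $Ig=f$; since $I$ is faithful there is at most one such $g$, and since $I$ is full there is exactly one. Hence $(d,\mathrm{id}_d)$ is initial in $d\downarrow\DD$.

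The second step is the elementary fact that the limit of a diagram whose indexing category has an initial object is computed by evaluation at that initial object, with the comparison map being the corresponding leg of the limit cone. Applying this to the diagram $d\downarrow\DD\to\DD\xrightarrow{I}\CC$ appearing in \eqref{limitovercat}, the object $T(d)={\sf lim}(d\downarrow\DD\to\CC)$ is carried isomorphically onto the value of the diagram at $(d,\mathrm{id}_d)$, namely $Id=d$, by the leg of the limit cone indexed by $(d,\mathrm{id}_d)$.

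The one point requiring care --- which I regard as the main, though mild, obstacle --- is the identification of this particular limit leg with the counit component $\varepsilon_d$. This is precisely how the pointwise formula for a right Kan extension is assembled: by \cite[Ch.~X, \S3, Th.~1]{mac2013categories}, under the identification \eqref{limitovercat} the component at $d\in\DD$ of the counit $\varepsilon\colon TI\to I$ is exactly the leg, indexed by $(d,\mathrm{id}_d)$, of the limit cone computing $T(Id)$. Combining this with the second step shows each $\varepsilon_d$ is an isomorphism; since $\varepsilon\colon TI\to I$ is a natural transformation all of whose components are isomorphisms, it is an isomorphism. (Alternatively, one may simply invoke the standard fact that the counit of a right Kan extension along a fully faithful functor is always a natural isomorphism, applied to ${\sf Ran}_I(I)$.)
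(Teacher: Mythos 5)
Your proof is correct, but the main body of it takes a more hands-on route than the paper. The paper's proof is a one-line citation: since $I\colon\DD\to\CC$ is full and faithful, the counit of the (pointwise) right Kan extension ${\sf Ran}_I(I)$ is an isomorphism by Mac Lane, Ch.~X, \S 3, Cor.~3 --- which is exactly the alternative you mention in your closing parenthesis. What you do instead is essentially re-prove that corollary in this special case: you use the limit formula \eqref{limitovercat}, observe that fullness and faithfulness of $I$ make $(d,\mathrm{id}_d)$ an initial object of the comma category $d\downarrow\DD$, conclude that the limit is computed by evaluation there, and then identify that limit leg with $\varepsilon_d$ via the pointwise description of the counit. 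All three steps are sound (fullness gives existence and faithfulness uniqueness of the required lift $g$ with $Ig=f$, so $(d,\mathrm{id}_d)$ is indeed initial; and your identification of the leg at $(d,\mathrm{id}_d)$ with $\varepsilon_d$ is exactly how the pointwise counit is constructed in Mac Lane's Theorem~1). The trade-off: the citation is shorter and hides the mechanism, while your argument makes transparent \emph{why} fullness matters --- it is what collapses the comma category to a diagram with an initial vertex --- and also makes explicit that the statement is about the pointwise Kan extension given by \eqref{limitovercat}, which is the setting the paper is actually working in.
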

\begin{proof}
Since the functor $I:\DD\to \CC$ is full and faithful, by \cite[Ch. X, \S 3, Cor.3]{mac2013categories} we obtain that $\varepsilon$ is an isomorphism. 
\end{proof}

\begin{lemma}\label{lemma:eta}
There exists a unique natural transformation 
\begin{equation}
\eta^T: {\sf Id}_\CC \longrightarrow T
\end{equation}
such that $\eta^T_d=\varepsilon_d^{-1}$ for any $d\in {\sf Ob}(\DD).$ 
\end{lemma}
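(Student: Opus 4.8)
The plan is to read $\eta^T$ off directly from the universal property of the right Kan extension $T={\sf Ran}_I(I)$, specialised to the functor $F={\sf Id}_\CC$.

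First I would translate the requirement on $\eta^T$ into the form in which the universal property is stated. For a natural transformation $\delta:{\sf Id}_\CC\to T$, the whiskering $\delta I:I\to TI$ has components $(\delta I)_d=\delta_d$ for $d\in{\sf Ob}(\DD)$, so the equation $\varepsilon\circ(\delta I)={\sf id}_I$ is, componentwise, the assertion that $\varepsilon_d\circ\delta_d={\sf id}_d$ for every $d\in{\sf Ob}(\DD)$. By the preceding lemma each $\varepsilon_d$ is an isomorphism, hence this equation is equivalent to $\delta_d=\varepsilon_d^{-1}$. Thus "$\delta$ satisfies the condition in the statement" and "$\varepsilon\circ(\delta I)={\sf id}_I$" are literally the same constraint on $\delta$.

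Next I would invoke the defining universal property of $T={\sf Ran}_I(I)$ with $F:={\sf Id}_\CC$ (so that $FI=I$) and with the chosen natural transformation $\varepsilon':={\sf id}_I:I\to I$. This produces a unique natural transformation $\delta:{\sf Id}_\CC\to T$ with $\varepsilon\circ(\delta I)={\sf id}_I$; I then set $\eta^T:=\delta$. By the previous paragraph this $\eta^T$ has $\eta^T_d=\varepsilon_d^{-1}$ for all $d\in{\sf Ob}(\DD)$, which gives existence, and it is the only natural transformation with this property, which gives uniqueness. (Uniqueness can also be extracted from the remark recorded after the universal property: two transformations ${\sf Id}_\CC\to T$ that agree after whiskering with $I$ must coincide, and any transformation meeting the stated condition has whiskering with components $\varepsilon_d^{-1}$.)

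I do not expect any real obstacle. The only points to be careful about are that the equivalence of the two conditions on $\delta$ genuinely uses the invertibility of $\varepsilon_d$ supplied by the previous lemma, and that one must stay consistent with the whiskering convention $(\delta I)_d=\delta_d$ so that the componentwise rewriting of $\varepsilon\circ(\delta I)={\sf id}_I$ is correct.
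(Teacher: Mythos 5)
Your proof is correct and is exactly the paper's argument: apply the universal property of $T={\sf Ran}_I(I)$ with $F={\sf Id}_\CC$ and $\varepsilon'={\sf id}_I$, then use the invertibility of each $\varepsilon_d$ (from the preceding lemma) to rewrite the condition $\varepsilon_d\circ\delta_d={\sf id}_d$ as $\delta_d=\varepsilon_d^{-1}$. The paper states this in one line; you have merely filled in the componentwise translation, which is the same route.
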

\begin{proof}
Take $F={\sf Id}_\CC$ and $\varepsilon'={\sf id}_I $ and use the universal property of the right Kan extension.
\end{proof}

Further we will treat $T$ as an co-augmented functor $T=(T,\eta^T).$ For any co-augmented functor $F=(F,\eta^F)$ we set 
\begin{equation}
    {\sf Inv}(F)=\{c\in {\sf Ob}(\CC)\mid \eta^F_c \text{ is iso} \}.
\end{equation} 
Note that $\DD \subseteq {\sf Inv}(T).$

\begin{lemma}\label{lemma:retract}
The class ${\sf Inv}(F)$ is closed under retracts.
\end{lemma}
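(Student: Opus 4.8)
The plan is to reduce the claim to the elementary fact that isomorphisms are closed under retracts in the arrow category of $\CC$, with naturality of the co-augmentation $\eta^F$ doing all the work.

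First I would fix a retract datum: suppose $c$ is a retract of some $c'\in{\sf Inv}(F)$, i.e.\ there are morphisms $i\colon c\to c'$ and $r\colon c'\to c$ with $r\circ i={\sf id}_c$. Applying naturality of $\eta^F$ to $i$ and to $r$ produces the commuting ladder
\[
\begin{tikzcd}
c \ar[r,"i"] \ar[d,"\eta^F_c"'] & c' \ar[r,"r"] \ar[d,"\eta^F_{c'}"'] & c \ar[d,"\eta^F_c"] \\
F(c) \ar[r,"F(i)"'] & F(c') \ar[r,"F(r)"'] & F(c)
\end{tikzcd}
\]
in which the top row composes to ${\sf id}_c$ and the bottom row composes to $F({\sf id}_c)={\sf id}_{F(c)}$ by functoriality. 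Thus $\eta^F_c$ is a retract of $\eta^F_{c'}$ inside the category of arrows of $\CC$.

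Next, rather than merely invoking the arrow-category fact, I would write down the inverse of $\eta^F_c$ explicitly. Since $\eta^F_{c'}$ is invertible, set $\rho := r\circ(\eta^F_{c'})^{-1}\circ F(i)\colon F(c)\to c$. Using the left-hand square ($F(i)\circ\eta^F_c=\eta^F_{c'}\circ i$) one obtains $\rho\circ\eta^F_c = r\circ(\eta^F_{c'})^{-1}\circ\eta^F_{c'}\circ i = r\circ i={\sf id}_c$; using the right-hand square ($\eta^F_c\circ r=F(r)\circ\eta^F_{c'}$) one obtains $\eta^F_c\circ\rho = F(r)\circ\eta^F_{c'}\circ(\eta^F_{c'})^{-1}\circ F(i)=F(r\circ i)={\sf id}_{F(c)}$. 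Hence $\eta^F_c$ is an isomorphism, i.e.\ $c\in{\sf Inv}(F)$, which is exactly the assertion.

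I do not expect a genuine obstacle here: the only point requiring care is to apply naturality twice, once to $i$ and once to $r$, and to keep track of the directions of the two squares. The argument uses nothing about $F$ beyond functoriality and the existence of the natural transformation $\eta^F$, so it applies verbatim to every co-augmented functor, in particular to $T=T_\DD$ (for which ${\sf Inv}(T)\supseteq\DD$ was already noted) and to the monads considered later in the paper.
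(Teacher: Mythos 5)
Your proof is correct and is essentially the paper's own argument: the paper simply says ``a retract of an isomorphism is an isomorphism,'' and you have written out explicitly why $\eta^F_c$ is a retract of $\eta^F_{c'}$ in the arrow category (via naturality applied to $i$ and $r$) and verified the resulting inverse. No gap; your version merely makes the one-line proof explicit.
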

\begin{proof}
Because a retract of an isomorphism is an isomorphism. 
\end{proof}

The following is a basic observation that follows from the above, that justifies the term "terminal monad," compare [3.7.3] in \cite{FrancisBorceuxcategorical}.

\begin{proposition}\label{D-completion} The co-augmented functor $T_{\DD}:\CC\to \CC$ of the $\DD$-completion is a terminal object in the category of co-augmented functors $F$ for which $\DD\subseteq {\sf Inv}(F).$ Moreover, $T_{\DD}$ is a monad  in functor category  $\CC^{\CC}.$  
\end{proposition}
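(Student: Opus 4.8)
Here is the approach I would take, verifying the two claims of the proposition in turn: first that $T_\DD$ is terminal in the indicated category $\mathcal{E}$ of co-augmented functors $F$ with $\DD\subseteq\mathsf{Inv}(F)$, and then that terminality automatically endows $T_\DD$ with a monad structure.

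\emph{Terminality.} Let $(F,\eta^F)$ be any object of $\mathcal{E}$. For $d\in\mathsf{Ob}(\DD)$ the map $\eta^F_d\colon d\to F(d)$ is an isomorphism, so $\varepsilon'_d:=(\eta^F_d)^{-1}$ defines a natural transformation $\varepsilon'\colon FI\to I$ (naturality is the naturality of $\eta^F$ on $\DD$ with arrows inverted). By the universal property of $T=\mathsf{Ran}_I(I)$ there is a unique natural transformation $\delta\colon F\to T$ with $\varepsilon_d\circ\delta_d=\varepsilon'_d$ for all $d\in\mathsf{Ob}(\DD)$. I would first check that $\delta$ is a morphism of co-augmented functors, i.e.\ $\delta\circ\eta^F=\eta^T$: since $\varepsilon_d$ is invertible, the factorization equation gives $\delta_d=\varepsilon_d^{-1}\circ(\eta^F_d)^{-1}$, whence $\delta_d\circ\eta^F_d=\varepsilon_d^{-1}=\eta^T_d$ by Lemma~\ref{lemma:eta}; thus $\delta\circ\eta^F$ and $\eta^T$ are natural transformations $\mathsf{Id}_\CC\to T$ that agree after whiskering with $I$, hence coincide by the uniqueness remark following the definition of $\mathsf{Ran}_I(I)$. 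For uniqueness, given any co-augmented morphism $\delta'\colon F\to T$, evaluating $\delta'\circ\eta^F=\eta^T$ at $d\in\mathsf{Ob}(\DD)$ and using invertibility of $\eta^F_d$ forces $\varepsilon_d\circ\delta'_d=(\eta^F_d)^{-1}=\varepsilon'_d$, so $\delta'$ satisfies the very equation characterizing $\delta$, and therefore $\delta'=\delta$. Hence $T_\DD$ is terminal in $\mathcal{E}$ (and it lies in $\mathcal{E}$ by the preceding lemmas, since $\DD\subseteq\mathsf{Inv}(T_\DD)$).

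\emph{Monad structure.} Next I would observe that $\mathcal{E}$ is closed under composition of endofunctors. Clearly $\mathsf{Id}_\CC\in\mathcal{E}$; and if $F,G\in\mathcal{E}$, then $G\circ F$ is co-augmented by $\eta^{GF}=(\eta^G F)\circ\eta^F$, while for $d\in\mathsf{Ob}(\DD)$ the arrow $\eta^F_d$ is an isomorphism, so $F(d)\cong d\in\DD\subseteq\mathsf{Inv}(G)$ and hence $F(d)\in\mathsf{Inv}(G)$ by Lemma~\ref{lemma:retract} (an isomorph is in particular a retract); thus $\eta^{GF}_d=\eta^G_{F(d)}\circ\eta^F_d$ is a composite of isomorphisms and $\DD\subseteq\mathsf{Inv}(G\circ F)$. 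A routine check (using naturality of $\eta^G$) shows that whiskering an $\mathcal{E}$-morphism on either side yields an $\mathcal{E}$-morphism, so $(\mathcal{E},\circ,\mathsf{Id}_\CC)$ is a strict monoidal category. I then invoke the standard fact that the terminal object of a monoidal category carries a canonical, and unique, monoid structure: the unit $\mathsf{Id}_\CC\to T_\DD$ and the multiplication $T_\DD\circ T_\DD\to T_\DD$ are the unique morphisms with those sources and targets in $\mathcal{E}$ (the latter makes sense since $\mathcal{E}$ is closed under $\circ$), and the unit and associativity laws hold automatically because $\mathcal{E}(T_\DD^{n},T_\DD)$ is a singleton for each $n$. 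Read off in $\CC^\CC$, the unit is $\eta^T$ (the unique co-augmented morphism $\mathsf{Id}_\CC\to T_\DD$), and $T_\DD$ is a monad.

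\emph{Expected obstacle.} The only genuinely delicate step is the terminality argument, and there the subtlety is the mismatch between the two universal properties: the right Kan extension constrains natural transformations into $T$ only through their whiskering by $I$, whereas membership in $\mathcal{E}$ records the entire co-augmentation. The remark after the definition of $\mathsf{Ran}_I(I)$ — that $\delta I=\delta'I$ forces $\delta=\delta'$ for transformations out of any functor, including $\mathsf{Id}_\CC$ — is precisely the bridge, and the verification above is essentially bookkeeping with it. After that, the monad part is formal, being an instance of ``terminal object $=$ monoid'' for the composition-monoidal structure on $\mathcal{E}$.
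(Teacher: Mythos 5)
Your proof is correct and follows essentially the same route as the paper: terminality is obtained by feeding $\varepsilon'=(\eta^F I)^{-1}$ into the universal property of ${\sf Ran}_I(I)$ and using the whiskering-uniqueness remark to translate the factorization equation into compatibility with the co-augmentations, and the monad structure comes from the uniqueness of $\mathcal{E}$-morphisms $T_\DD^n\to T_\DD$. Your packaging of the second step as ``the terminal object of the composition-monoidal category $\mathcal{E}$ is a monoid'' (after checking closure of $\mathcal{E}$ under $\circ$ and whiskering) is just a slightly more explicit form of the paper's argument, not a different one.
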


\begin{proof} Take a co-augmented functor $F=(F,\eta^F)$ such that $\DD\subseteq {\sf Inv}(F).$  Note that $\eta^F I:I\to FI$ is an isomorphism. 
We use the universal property of $T$ and take $\varepsilon'=(\eta^F I)^{-1}:FI\to  I.$ Then there exists a unique natural transformation $\delta : F \to T$ such that $ \varepsilon \circ (\delta I)=(\eta^F I)^{-1}.$ Since $ \varepsilon = (\eta^F I)^{-1},$ we obtain that the equation $\varepsilon \circ (\delta I)=(\eta I)^{-1}$ is equivalent to the equation $ (\delta \circ\eta^F)I = \eta^T I.$ And the equation $(\delta \circ\eta^F)I = \eta^T I$ is equivalent to $ \delta \circ\eta^F=\eta^T$ by the universal property of $T.$ 

The monad structure of this terminal $T_\DD$ follows immediately from the fact that its square preserves all objects of $\DD,$ giving a unique natural transformation $T_{\DD}^2\to T_\DD.$  The monadic equations are satisfied since they all involve equality of natural transformations  from powers of $T_\DD$ to $T_\DD$  itself, but there is a unique such transformation for each power since 
$T_\DD$ is terminal among  these co-augmented functors, all of which preserve the objects of  $\DD.$ 
\end{proof}

{\em Remark:} Note that the above characterization shows that the terminal monad $T_{\DD},$ associated with a subcategory
$\DD\subseteq \CC,$ can  be identified using  solely  its effect on the objects in $\DD,$ being the terminal
co-augmented  functor  $\CC \to\CC,$ that "preserves the objects" of this subcategory.
Of course, its usual construction, as above, does  employ  morphisms  in $\DD$  and $\CC.$

\section{Terminal monads associated with a  functor \texorpdfstring{$\DD\to \CC$}{}} 
More generally, consider a general functor $F:\DD\to \CC.$ 
Now, consider the  subcategory  
$\CC^{\CC}_F,$ of the category of end-functors  $\CC^{\CC},$ consisting of all co-augmented 
functors  G, $ Id\to G:\CC\to \CC,$ 
that preserve the image of $F,$
i.e. with 

$$  (Id\to G)(F(x))= F(x)\to GF(x)$$ 

is an isomorphism  in $\CC$ for any object $x\in \DD.$  
The  subcategory    $\CC^{\CC}_F,$ of the full functor category,  is a  category of co-augmented functors $G,$  which is evidently closed under all limits. Hence it has a terminal object which is the  {\em terminal monad}
$T_F$ associated with the given functor $F,$ namely, preserving the image of $F.$

We note that this terminal object has a natural monad structure:

\begin{proposition} Let $M$ be an co-augmented endo-functor in $\CC^{\CC.}$
 The terminal object, $T_M,$ in the category $\CC^{\CC}_M$ of co-augmented endo-functors preserving the image of $M,$ is naturally a monad.   
\end{proposition}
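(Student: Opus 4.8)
The plan is to mimic the monad-structure argument already given for $T_\DD$ in Proposition \ref{D-completion}, replacing "preserves the objects of $\DD$" by "preserves the image of $M$." The key point, just as before, is that the defining property of $\CC^\CC_M$ — the co-augmentation being invertible on objects of the form $Mx$ — is stable under composition of endo-functors, so that powers of $T_M$ again lie in $\CC^\CC_M$, and then terminality forces all the monad data and equations.

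First I would verify that $\CC^\CC_M$ is closed under composition of co-augmented functors, which is already implicit in the surrounding text: if $G,H\in\CC^\CC_M$, then for each $x\in\CC$ the composite $Mx\xrightarrow{\eta^H_{Mx}}HMx\xrightarrow{G(\eta^H_{Mx})^{?}}\dots$ — more carefully, the co-augmentation of $G\circ H$ at $Mx$ is $Mx\xrightarrow{\eta^H_{Mx}} HMx \xrightarrow{\eta^G_{HMx}} GHMx$; the first map is an isomorphism since $H\in\CC^\CC_M$, and since $HMx\cong Mx$ is again in the image of $M$, the second map $\eta^G_{HMx}$ is an isomorphism because $G\in\CC^\CC_M$. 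Hence $G\circ H\in\CC^\CC_M$. In particular $T_M^2=T_M\circ T_M\in\CC^\CC_M$, and more generally $T_M^n\in\CC^\CC_M$ for all $n\geq 1$.

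Next, since $T_M$ is terminal in $\CC^\CC_M$, there is a unique morphism of co-augmented functors $\mu\colon T_M^2\to T_M$; this is the candidate multiplication, and the unit is the co-augmentation $\eta\colon Id\to T_M$ itself (note $Id\in\CC^\CC_M$ trivially, and the morphism $Id\to T_M$ is exactly $\eta$, the unique such morphism). For the monad axioms — associativity $\mu\circ(\mu T_M)=\mu\circ(T_M\mu)\colon T_M^3\to T_M$ and the left/right unit laws $\mu\circ(\eta T_M)=\mathrm{id}=\mu\circ(T_M\eta)\colon T_M\to T_M$ — I would argue that in each case both sides are morphisms of co-augmented functors from an object of $\CC^\CC_M$ (namely $T_M^3$, respectively $T_M$) to the terminal object $T_M$, hence equal by uniqueness. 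One must check that the relevant composites are indeed morphisms of co-augmented functors, i.e. respect the co-augmentations; this is formal, following from the fact that $\mu$, $\eta$, and whiskerings thereof are such morphisms and these are closed under composition and whiskering.

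The main obstacle — though it is more a matter of care than of genuine difficulty — is the same subtlety flagged in the remark after Proposition \ref{D-completion}: the category $\CC^\CC_M$ has as its objects \emph{co-augmented} endo-functors, and its morphisms are natural transformations \emph{compatible with the co-augmentations}, so "terminal" means there is a unique co-augmentation-preserving natural transformation into $T_M$. One must therefore be slightly careful that the whiskered transformations $\mu T_M$, $T_M\mu$, $\eta T_M$, $T_M\eta$ are genuinely morphisms in $\CC^\CC_M$ (not merely in $\CC^\CC$), so that the uniqueness clause applies to them. Once this bookkeeping is in place, the argument is identical in form to the one already carried out for $T_\DD$, and indeed $T_\DD$ is the special case $M=T_\DD$ (or $M$ any functor with image generating $\DD$). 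I would close by remarking that this construction is functorial in $M$, giving the promised functor from co-augmented endo-functors to monads on $\CC$.
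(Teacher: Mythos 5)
Your proposal is correct and follows essentially the same route as the paper's proof: terminality of $T_M$ in $\CC^\CC_M$, closure of $\CC^\CC_M$ under composition, and the observation that every monad axiom asserts an equality of co-augmentation-preserving maps from a self-composition of $T_M$ (which again lies in $\CC^\CC_M$) into the terminal object, hence holds by uniqueness. You merely spell out two points the paper treats as "clear" --- the naturality argument showing $\CC^\CC_M$ is closed under composition, and the bookkeeping that the whiskered transformations respect the co-augmentations --- which is a welcome but not substantively different elaboration.
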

\begin{proof}
Denote the terminal co-augmented functor by $T_M$ as above. Since the composition: $T_M\circ T_M$ clearly preserves the image of $M,$ and $T_M$ is terminal among those preserving $M,$ there is a unique map $\mu: T_MT_M\to T_M.$  The  conditions,  on a co-augmented functor with this  $\mu$ as a structure map, of being  a monad, involve  equality among various  maps from compositions of $T_M$ with itself 
to $T_M.$ Each such composition preserves the image of $M,$ therefore there is  a unique map, from any  self-composition of $T_M,$ to the terminal object $T_M.$  Recall  all the conditions on a co-augmented to be a monad involve equality between various maps to the monad itself.  It follows that all the needed equalities are satisfied by $T_M.$  
\end{proof}
\medskip

We conclude that  the above basic properties  of $T_\DD$
holds when one replaces the inclusion $I:\DD\subseteq \CC$ with any  functor $M:\DD\to \CC.$ In this case, the  right Kan extension $T_M$ is the terminal monad on $\CC$ that  preserves the image subcategory of the given functor $M.$    In case  $\DD=\CC$  and  where the functor  $M: \CC\to \CC$ is  a co-augmented functor, we got  the terminal monad
 $T_M$ among those that preserve the image of $M.$

Consider the special, well-known case, where  $M$ is an idempotent localization functor $Id\to M\cong M^2. $ Namely, a projection onto a subcategory of $\CC.$ In that case, 
  $m: T_M\to M$ is an  equivalence. Namely, $M$ is its own terminal monad.  
(Compare: \cite{nlabidempotentmonad})
$$$$

For the sake of completeness, we state:
\begin{proposition}\label{localizationisterminal}
Let $L:\CC\to \CC$ be a co-augmented idempotent functor, i.e. localization- projection onto a full subcategory of local object. Then $L$ is its own terminal monad i.e. $L\cong T_L.$
\end{proposition}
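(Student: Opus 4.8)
The plan is to verify directly that $L$ is itself a terminal object of the category $\CC^{\CC}_L$ of co-augmented endofunctors preserving the image of $L$, and then to use the preceding proposition to promote the resulting isomorphism to one of monads.

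First I would record the two standard facts about idempotent monads that make everything work. Since $L$ is idempotent, the multiplication $\mu^L:L^2\to L$ is an isomorphism, and the monad identities $\mu^L\circ(\eta^L L)=\mathrm{id}_L=\mu^L\circ(L\eta^L)$ then force $\eta^L L=L\eta^L=(\mu^L)^{-1}$. In particular, for every object $x$ the component $\eta^L_{L(x)}:L(x)\to L^2(x)$ of the co-augmentation is an isomorphism, and $L(\eta^L_x)=\eta^L_{L(x)}$. The first of these says precisely that $L$ is an object of $\CC^{\CC}_L$.

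Next I would show $L$ is terminal in $\CC^{\CC}_L$. Given a co-augmented functor $(G,\eta^G)$ with every $\eta^G_{L(x)}:L(x)\to G(L(x))$ invertible, I define
$$\phi_x \ :=\ (\eta^G_{L(x)})^{-1}\circ G(\eta^L_x)\ :\ G(x)\longrightarrow L(x).$$
Naturality of $\phi=(\phi_x)$ is a routine diagram chase from naturality of $\eta^G$ and $\eta^L$ and functoriality of $L$, and compatibility with the co-augmentations, $\phi\circ\eta^G=\eta^L$, is immediate from the naturality square of $\eta^G$ applied to the arrow $\eta^L_x:x\to L(x)$. For uniqueness, suppose $\psi:G\to L$ also satisfies $\psi\circ\eta^G=\eta^L$. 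Evaluating this equation at the object $L(x)$ and using that $\eta^G_{L(x)}$ is invertible pins down $\psi_{L(x)}=\eta^L_{L(x)}\circ(\eta^G_{L(x)})^{-1}$; substituting this into the naturality square of $\psi$ for $\eta^L_x:x\to L(x)$, in which the left-hand vertical map is $L(\eta^L_x)=\eta^L_{L(x)}$ and hence an isomorphism, and then cancelling that isomorphism, gives $\psi_x=(\eta^G_{L(x)})^{-1}\circ G(\eta^L_x)=\phi_x$.

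Finally, $L$ and $T_L$ are both terminal objects of $\CC^{\CC}_L$, so they are canonically isomorphic as co-augmented functors; since the preceding proposition shows that there is a unique natural transformation from any self-composite of the terminal object to itself, the monad structure is forced and the canonical isomorphism $L\cong T_L$ automatically respects it. I expect the only delicate point to be the uniqueness step, which genuinely relies both on the idempotency identity $L\eta^L=\eta^L L$ and on the invertibility of $\eta^L_{L(x)}$ — without them the relevant naturality square could not be solved for $\psi_x$. (Alternatively, the statement drops out at once from the equalizer description $T_L\cong Equal(L\rightrightarrows L^2)$ recorded in the introduction: for an idempotent monad the two structure maps $\eta^L L$ and $L\eta^L$ agree, so the equalizer is $L$ itself.)
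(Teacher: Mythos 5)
Your proof is correct, and it takes a more direct route than the paper's. The paper constructs the comparison map by the chain $M \to ML \to LML \cong L^2 \cong L$ and then argues uniqueness through the monad structure: a map of monads $f\colon M\to L$ is exhibited as a retract of $Mf\colon M^2\to ML\cong L$, i.e.\ by applying $M$ to the augmentation triangle $\mathrm{id}\to M,\ \mathrm{id}\to L$, so that $f$ is pinned down by the multiplication of $M$; note that this version of the uniqueness argument really uses that the competing functor is a monad. You instead verify terminality of $L$ in the full category $\CC^{\CC}_L$ of co-augmented functors preserving the image of $L$: you write the canonical map explicitly as $\phi=(\eta^G L)^{-1}\circ G\eta^L$, check compatibility with the co-augmentations via the naturality square of $\eta^G$ at $\eta^L_x$, and get uniqueness from the naturality square of any competing $\psi$ at $\eta^L_x$, using the idempotency identity $L\eta^L=\eta^L L$ (an isomorphism) to cancel. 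This buys two things: your uniqueness argument applies to arbitrary co-augmented $G$, which is exactly what terminality of $T_L$ in $\CC^{\CC}_L$ demands, whereas the paper's retract argument is phrased only for monad maps; and your identification of precisely which idempotency facts are used ($\mu^L$ invertible, hence $\eta^L L=L\eta^L=(\mu^L)^{-1}$) makes the hypotheses transparent, matching the paper's tacit assumption that ``$L\cong L^2$ by the two natural maps.'' Your parenthetical alternative via $T_L\cong \mathrm{Equal}(L\rightrightarrows L^2)$ is also valid (the equalizer of two equal maps is the source), but since Proposition \ref{equalizerequaterminal} appears later in the paper, it is good that your main argument does not rely on it.
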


\begin{proof}
First note that $L$ has associated  monad structure since $L\cong L^2$
by the two natural maps. Second, for every monad that preserves the image of $L,$ namely, with the natural map 
$L\to ML$ an equivalence (isomorphism) one gets a map 

$$M\to ML\to LML\cong L^2\cong L$$

The monad structure of $M$ forces the uniqueness of the map of monads $M\to L$ since any map of monads

$f: M\to L$  is a retract of 
$Mf: M^2\to ML\cong L,$ being a monad map, which is a retract of 
$M(id\to L.)$

$$$$

Starting with the map of monads \ref{1},
$$$$
\begin{equation} \label{1}
\begin{tikzcd}
& id  \ar[rd,"i_L"]\ar[ld,"i_M"'] 
 & \\
M \ar[rr,"f"']  & \  & L \end{tikzcd}
\end{equation}
$$$$
Applying  $M$ to this triangle of maps we see immediately that
 $f$ is uniquely determined by the monad

\end{proof}
$$$$

\begin{remark}
If the subcategory $\DD$ as above is closed under all limits then it is localizing 
and the $\DD$-completion 
$T_\DD$ is the localization 
$L_\DD: \CC \to \DD,$ projecting $\CC$ to the subcategory $\DD.$

\end{remark}
$$$$

\subsection{\texorpdfstring{$T_M$}{} in terms of \texorpdfstring{$M$}{}}

It turns out that there is a direct 
formula expressing the terminal monad $T_M,$ associated with $M,$ in terms of $M.$ 
Generally,
given a monad $Id\to M,$  
consider  a co-augmented functor $Id\to F$ that preserves the image of $M,$ i.e. $M\cong FM.$ Applying such an $F$ to $Id\to M$ we get a  canonical map $F\to M,$ for every such functor. In particular,
 for any monad $M,$
one gets  a natural map $T_M\to M,$
from the terminal monad to $M,$ giving rise to the augmentation $T\to Id$ of the functor $T.$ 

In the following  this last map is identified with the natural  map to $M,$ of the {\em equalizer of the natural diagram:}
$M\rightrightarrows M^2.$ In addition this map $T_M\to M$ is shown to be a map of  monads. 

\medskip

Let us start with two basic  properties:

\begin{proposition}\label{NisanMalg}
Let $f:(M, \mu_{{}_M})\to (N,\mu_{{}_N})$ be a map of monads.
Then the monad $N$ is naturally an
$M-$algebra. In particular, $T_M(N)\cong N,$ thus $f$ induces a map of monads $Tf: T_M\to T_N.$ Hence $T:Mon\CC\to Mon\CC,$ has a natural structure of {\em augmented} endo-functor $T\to Id$ on the category $Mon\CC$ of monads over $\CC.$

\end{proposition}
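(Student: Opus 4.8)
The plan is to produce the $M$-algebra structure on $N$ by restriction of scalars along $f$, to deduce from it that every $M$-algebra is a retract of a free one and hence fixed by $T_M$, and then to feed this into the universal property of $T_N$ exactly as in the preceding propositions.

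First I would view $N$ as an object of the functor category $\CC^{\CC}$ equipped with the monad $M\circ(-)$, and check that the composite $MN \xrightarrow{fN} N^2 \xrightarrow{\mu_N} N$ is an algebra structure: the unit law reduces to the identity $f\circ\eta^M=\eta^N$ together with the unit law of $N$, and associativity reduces to $f$ being a map of monads together with the associativity of $N$. Read objectwise, this says precisely that each $N(c)$ is an $M$-algebra in $\CC$, with structure map $(\mu_N)_c\circ f_{N(c)}$, which is the asserted natural $M$-algebra structure on $N$.

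Next, for an arbitrary $M$-algebra $A$ with structure map $\alpha\colon M(A)\to A$, the unit $\eta^M_A$ is a section of $\alpha$, so $A$ is a retract of the free algebra $M(A)$. Since $M(A)$ lies in the image of $M$ and $T_M$ preserves that image, $M(A)\in\mathsf{Inv}(T_M)$; by Lemma~\ref{lemma:retract} the class $\mathsf{Inv}(T_M)$ is closed under retracts, so $A\in\mathsf{Inv}(T_M)$, i.e.\ $\eta^{T_M}_A\colon A\xrightarrow{\ \sim\ }T_M(A)$. Applying this with $A=N(c)$ for every $c$ yields a natural isomorphism $N\xrightarrow{\ \sim\ }T_M\circ N$, which is the meaning of $T_M(N)\cong N$. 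In particular $T_M$ preserves the image of $N$, so $T_M$ lies in the category $\CC^{\CC}_N$ of co-augmented functors preserving $\mathrm{Im}(N)$; by terminality of $T_N$ in that category there is a unique morphism of co-augmented functors $Tf\colon T_M\to T_N$. Functoriality of $T$ (namely $T(\mathrm{id}_M)=\mathrm{id}_{T_M}$ and $T(g\circ f)=Tg\circ Tf$), and the fact that $Tf$ is a map of monads, then follow from the uniqueness argument already used to put a monad structure on $T_M$: any natural transformation from an iterated composite of $T_M$ into $T_N$ is unique, since such composites still preserve $\mathrm{Im}(N)$, so every required identity between such transformations holds automatically. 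This makes $T\colon\mathrm{Mon}\,\CC\to\mathrm{Mon}\,\CC$ a functor.

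Finally, to see that the canonical maps $a_M\colon T_M\to M$ assemble into a natural transformation $a\colon T\Rightarrow\mathrm{Id}$ I would use the equalizer presentation $T_M\cong\mathrm{Equal}(M\rightrightarrows M^2)$ established above (with $a_M$ the canonical map to $M$): because $f$ commutes with the units and multiplications it carries the pair $M\rightrightarrows M^2$ to the pair $N\rightrightarrows N^2$, hence induces on equalizers a map $T_M\to T_N$ sitting in a commuting square over $f$, and this map is a morphism of co-augmented functors, so by the uniqueness above it equals $Tf$; thus $a_N\circ Tf=f\circ a_M$ and $(T,a)$ is an augmented endofunctor on $\mathrm{Mon}\,\CC$. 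The only genuinely content-bearing step is the retract observation of the third paragraph --- every $M$-algebra is fixed by $T_M$ --- while everything else is formal manipulation of universal properties; the points requiring care will be the objectwise interpretation of ``preserves the image of'' and of $T_M(N)$ in the functor category, and the identification of the equalizer-induced map with the terminality-induced $Tf$.
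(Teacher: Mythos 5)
Your argument is correct and follows essentially the same route as the paper: you endow $N$ with the $M$-algebra structure $\mu_N\circ(fN)$, observe that each $N(c)$ is therefore a retract of $M(N(c))$ and hence lies in ${\sf Inv}(T_M)$ by Lemma~\ref{lemma:retract}, and then invoke terminality of $T_N$ to get the unique map $T_M\to T_N$. Your additional verifications (the algebra axioms, functoriality via uniqueness, and naturality of $T\to Id$ via the equalizer presentation) are sound elaborations of points the paper leaves terse or defers to Theorem~\ref{monadmapTMtoM}.
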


\begin{proof}
Since $M$ is an algebra over itself, we need to show that
the natural map  $N\to M(N),$   gotten by applying the 
co-augmentation $\iota_M: Id\to M$ to $N,$ has a left inverse i.e., that $N$ is a retract of $M(N)\equiv M\circ N.$ 
Since $T_M$ preserves $M,$ it preserves also any retract of $M.$ 
The natural left inverse  is given by the composition:

$$ MN \xrightarrow{f\circ N} NN\xrightarrow{\mu_{{}_N}} N $$

Hence, $N(x)$ has been shown to be a retract of $M(N(x)),$ for all 
objects $x\in obj \CC.$
Thus $T_M(N(x))\cong N(x).$ But $T_N$ is the terminal monad that preserves all objects of the form $N(x).$ Therefore, there is a unique map $T_M\to T_N,$ as needed.
\end{proof}

Second, an interesting closure property 

%that follows directly from a %theorem of Libman  in  %\cite{libmanuniversal} Theorem %4.7:

\begin{proposition} \label{preservelimit}
 Let $X: I\to \CC$ be a functor with $I$  being a small (indexing) category and $M$ a monad over $\CC.$ 
 Assume that for each $i\in I$ the object $X_i\in obj \CC,$ is a retract of $M(X_i.)$
 Then the object  $Y=lim_IX_i $ is a retract of $T_M(Y),$ hence $T_{{T_M}}(Y)\cong Y.$ Similarly, any such limit  $Y$ of $M-$algebras is naturally a $T_M$- algebra. 
\end{proposition}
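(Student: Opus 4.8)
The plan is to split the co-augmentation $\eta^{T_M}_Y\colon Y\to T_M(Y)$ by hand, using the presentation of $Y$ as a limit, and then to read off both assertions from the closure of the classes ${\sf Inv}(-)$ under retracts (Lemma~\ref{lemma:retract}). First I would note that $T_M$ already preserves each $X_i$: since $X_i$ is a retract of $M(X_i)$, the object $M(X_i)$ lies in the image of $M$, and by the defining terminal property of $T_M$ the image of $M$ is contained in ${\sf Inv}(T_M)$; as ${\sf Inv}(T_M)$ is closed under retracts, this forces $X_i\in{\sf Inv}(T_M)$, i.e.\ $\eta^{T_M}_{X_i}$ is an isomorphism, with inverse $\rho_i\colon T_M(X_i)\to X_i$ say.

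Next, writing $\pi_i\colon Y\to X_i$ for the limit projections, I would set $r_i=\rho_i\circ T_M(\pi_i)\colon T_M(Y)\to X_i$. Naturality of $\eta^{T_M}$ (hence of its inverse $\rho$ where defined), combined with the cone identities $X(f)\circ\pi_i=\pi_j$, shows that $(r_i)_{i\in I}$ is a cone over $X\colon I\to\CC$; let $r\colon T_M(Y)\to Y$ be the map it induces, so $\pi_i\circ r=r_i$. Applying naturality of $\eta^{T_M}$ once more gives $\pi_i\circ r\circ\eta^{T_M}_Y=\rho_i\circ\eta^{T_M}_{X_i}\circ\pi_i=\pi_i$ for every $i$, hence $r\circ\eta^{T_M}_Y={\sf id}_Y$ by the universal property of the limit. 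Thus $Y$ is a retract of $T_M(Y)$, with section $\eta^{T_M}_Y$. Now $T_M(Y)$ belongs to the image of $T_M$, hence to ${\sf Inv}(T_{T_M})$, and closure of ${\sf Inv}(T_{T_M})$ under retracts yields $Y\in{\sf Inv}(T_{T_M})$, that is $T_{T_M}(Y)\cong Y$.

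For the last sentence, assume in addition that the $X_i$ and the transition maps of $X$ are $M$-algebras and $M$-algebra maps. Since the forgetful functor from $M$-algebras to $\CC$ creates limits, the structure maps $M(X_i)\to X_i$ assemble to an $M$-algebra structure $M(Y)\to Y$ presenting $Y$ as the limit taken in $M$-algebras; restriction of scalars along the canonical map of monads $T_M\to M$ then equips $Y$ with a $T_M$-algebra structure. (Alternatively, one can check directly that the retraction $r$ constructed above is a $T_M$-algebra structure on $Y$: the unit axiom is the identity $r\circ\eta^{T_M}_Y={\sf id}_Y$ just proved, and the associativity axiom reduces, projection by projection, to the assertion that $(X_i,\rho_i)$ is a $T_M$-algebra, which holds because $\eta^{T_M}_{X_i}$ is invertible and $T_M$ is a monad.)

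I expect the only genuine computation to be the diagram chase of the second paragraph: that $(r_i)$ is a cone and that $r$ splits $\eta^{T_M}_Y$. Both are formal manipulations of naturality squares for $\eta^{T_M}$ against the cone equalities, using nothing beyond the first step, but it is the single place where something actually has to be verified; everything else is the terminal-object and monad characterization of $T_M$ together with Lemma~\ref{lemma:retract}. The one imported fact, that $T_M\to M$ is a map of monads, is used only for the clean form of the final assertion and can be sidestepped by the parenthetical argument above.
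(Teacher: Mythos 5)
Your argument is correct and is essentially the paper's own proof: your componentwise retraction $r=\lim_i\bigl((\eta^{T_M}_{X_i})^{-1}\circ T_M(\pi_i)\bigr)$ is exactly the paper's composite of the assembly map $T_M(\lim_i X_i)\to \lim_i T_M(X_i)$ with the isomorphisms $T_M(X_i)\cong X_i$ obtained from retract-closure of ${\sf Inv}(T_M)$, and the algebra structure you verify on $Y$ is the one the paper asserts this map provides. One caution: your primary route to the final sentence (creating the limit in $M$-algebras and restricting along $T_M\to M$) assumes the transition maps are $M$-algebra maps, whereas the paper's remark explicitly imposes no compatibility among the retractions, so it is your parenthetical direct check of the unit and associativity axioms for $r$ that matches the stated generality.
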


Note: there is no assumption here about  relations among the various retractions, namely the retract structures on different $X_i.$ Thus the limit is not in general an $M$-retract but it is a $T_M$
retract. For example, (in  $Cat_\infty$) in the infinity category of spaces, if $M=R$ is the free   $R$-module spanned by a space $X$, then a limit of any diagram of such $R$-GEMs is not, in general, a $R$-algebra but rather a $R_\infty =T_R$-algebra.

\begin{proof}   

Consider the composition:

$$
lim_i X_i\to T_M(lim_i X_i)\overset a {\to} lim_i T_M(X_i)\cong lim_i X_i
$$
which is clearly the identity map.

 The right-hand side map $a$ is the assembly map for limits, and the equality on the right is a consequence of $T_M(X_i)\cong X_i, $ since the latter is a retract 
 of $M(X_i)$ by assumption and hence also preserved by $T_M.$

The right-hand side map $a,$ is directly seen to 
equip $lim_i X_i$ with an 
$T_M-$ algebra structure.
  
\end{proof}
$$$$

\subsection{The equalizer as the terminal monad.}\label{equalizer}
\medskip
Given a co-augmented functor $Id\to M,$ denote by  $Eq_M$ the equalizer of the two natural maps  $M\rightrightarrows M^2$ coming form the  co-augmentation structure $Id\to M.$ 

First, we note the following:
\begin{lemma}\label{maptoequalizer}
Let $Id\to M$ be an co-augmented functor $\CC\to \CC.$ Let $F$  be any co-augmented functor $F:\CC\to \CC$ that preserves $M,$ i.e. with
$(Id\to F)(M)=M\overset \cong \to F(M)$ an equivalence.
Then there is a natural map, of co-augmented functors, $F\to Eq_M$
from $F$ to the equalizer of $M\rightrightarrows M^2.$ 

\end{lemma}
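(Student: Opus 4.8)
The plan is to construct the natural transformation $F \to Eq_M$ by first producing a map $F \to M$ and then showing it equalizes the two natural maps $M \rightrightarrows M^2$. The starting point is the observation already used in the text: if $F$ preserves $M$, i.e.\ the co-augmentation $\eta^F_{M} : M \to FM$ is an isomorphism, then applying $F$ to the co-augmentation $\iota_M : \mathrm{Id} \to M$ and precomposing with the inverse of $\eta^F_M$ gives a natural transformation
\[
\phi : F \xrightarrow{\ F(\iota_M)\ } F M \xrightarrow{\ (\eta^F_M)^{-1}\ } M .
\]
Naturality of $\eta^F$ applied to $\iota_M$ shows that $\phi$ is compatible with the co-augmentations, i.e.\ $\phi \circ \eta^F = \eta^T$-type compatibility: $\phi_c \circ \eta^F_c = \iota_M$ evaluated appropriately, so $\phi$ is a map of co-augmented functors $F \to M$.

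Next I would verify that $\phi$ factors through the equalizer $Eq_M \hookrightarrow M$. By the universal property of equalizers in $\mathrm{Fun}(\CC,\CC)$ (limits here are computed objectwise, so this is just the objectwise equalizer), it suffices to check that the two composites $F \xrightarrow{\phi} M \rightrightarrows M^2$ agree. The two maps $M \rightrightarrows M^2$ are $M(\iota_M)$ and $\iota_M M$ (whiskering the co-augmentation on either side). So I must show $M(\iota_M) \circ \phi = (\iota_M M) \circ \phi$ as natural transformations $F \to M^2$. The right-hand composite is easy: $(\iota_M M)\circ \phi = M(\phi) \circ (\iota_M F)$ by naturality of $\iota_M$, and one chases this using the definition of $\phi$. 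For the left-hand composite one uses naturality of $\iota_M$ against $\iota_M$ itself, plus the definition of $\phi$ in terms of $F(\iota_M)$ and $(\eta^F_M)^{-1}$. The key algebraic fact making the two sides coincide is the ``cosimplicial identity'' type relation among iterated co-augmentations together with the fact that $\eta^F_M$ is invertible; concretely, both composites unwind to $(\eta^F_{M^2})^{-1}$-style expressions precomposed with $F$ applied to $\iota_{M^2} = \iota_M M \circ \iota_M = M\iota_M \circ \iota_M$, which are literally equal.

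Then the universal property of the objectwise equalizer produces the desired natural map $\psi : F \to Eq_M$ with $(Eq_M \hookrightarrow M) \circ \psi = \phi$, and its compatibility with co-augmentations follows since $\phi$ is already a map of co-augmented functors and $Eq_M \hookrightarrow M$ is a monomorphism of co-augmented functors (the co-augmentation $\mathrm{Id} \to Eq_M$ exists because $\iota_M$ itself equalizes $M \rightrightarrows M^2$, by the unit axiom $M\iota_M \circ \iota_M = \iota_M M \circ \iota_M$). So $\psi$ is automatically a morphism in the category of co-augmented functors.

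The main obstacle I expect is the bookkeeping in the second step: making the whiskering/naturality diagram chase fully precise, being careful that ``the two natural maps $M \rightrightarrows M^2$'' really are $M\iota_M$ and $\iota_M M$ (and not something involving a monad multiplication, which for a merely co-augmented $M$ need not exist), and confirming that invertibility of $\eta^F_M$ is used in exactly the right place so that $\phi$ is well-defined and the equalizing identity holds on the nose rather than only up to the ambiguity that $Eq_M$ is meant to resolve. Everything else — objectwise computation of limits, retract-type arguments, compatibility of co-augmentations — is routine given the earlier lemmas, especially Lemma~\ref{lemma:eta} and the discussion of $Eq_M$.
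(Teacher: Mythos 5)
Your proposal is correct and follows essentially the same route as the paper: both construct the map $F \xrightarrow{F(\iota_M)} FM \xrightarrow{(\eta^F_M)^{-1}} M$ and use naturality of $\eta^F$ together with the identity $M\iota_M\circ\iota_M=\iota_M M\circ\iota_M$ to see that it equalizes $M\rightrightarrows M^2$, hence factors through $Eq_M$ compatibly with the co-augmentations. You merely make explicit the diagram chase that the paper compresses into ``apply $F$ to $Id\to M\rightrightarrows M^2$ and transfer along the isomorphisms $M\cong FM$, $M^2\cong FM^2$.''
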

\begin{proof}
 Apply $F$ to the commutative diagram
 $Id\to M\rightrightarrows M^2,$ to get the desired factorization to the equalizer:
 observing that we get commutative:
 $$
 F\to F(M) \rightrightarrows F(M^2)
 $$

 which  is equivalent to:
$$
 F\to M \rightrightarrows M^2
 $$
 by the assumption on $F.$  Hence there is a well-defined factorization of the left-hand side map
 through the equalizer $F\to Eq_M.$ This map clearly respects the co-augmentation $Id\to F\to M.$
\end{proof}

$$$$
For the rest of the discussion, we will mostly assume, sometimes for convenience only, that $M$ is a monad on $\CC.$ We noticed that
every functor that preserves 
$M$ maps naturally to the functor $Eq_M.$
The same is true in particular to the terminal functor that preserves $M.$ But $Eq_M$  preserves $M$ thus, by definition, it maps uniquely to the terminal $T_M.$
This brings us to the following:
$$$$
\begin{proposition}\label{equalizerequaterminal}
Let $M$ be a monad in $Mon\CC,$ the category of monads over  $\CC.$
The terminal monad preserving the image of $M$  is
naturally equivalent to the equalizer:

$$
T_M\cong Eq(M \rightrightarrows M^2.)
$$
Equalizer that is, with respect to the two natural transformations given by the augmentation. In particular, the equalizer itself has a natural structure of a monad. 
\end{proposition}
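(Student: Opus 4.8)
The plan is to prove that the equalizer $E:=Eq(M\rightrightarrows M^2)$ is itself the terminal object of the category $\CC^{\CC}_M$ of co-augmented endofunctors of $\CC$ that preserve the image of $M$. Once this is established, $T_M\cong E$ by uniqueness of terminal objects, and $E$ acquires a monad structure because the terminal object of $\CC^{\CC}_M$ is a monad (as already shown for any such terminal object). Preliminarily, note that $E$ is a genuine endofunctor of $\CC$ — the equalizer of a parallel pair of natural transformations is computed objectwise, and $\CC$ has equalizers — and that $E$ is co-augmented: writing $\eta:\mathrm{Id}\to M$ for the co-augmentation and $\mu:M^2\to M$ for the multiplication of $M$, the two maps $M\rightrightarrows M^2$ are the whiskerings $\eta M$ and $M\eta$, and $\eta$ equalizes them — this is exactly the naturality square of $\eta$ along the morphisms $\eta_X:X\to MX$ — so $\eta$ factors through the equalizer inclusion $j:E\hookrightarrow M$, defining $\eta^E:\mathrm{Id}\to E$ with $j\circ\eta^E=\eta$.

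The first substantive step is to check that $E$ lies in $\CC^{\CC}_M$, i.e. that $\eta^E_{MX}:MX\to E(MX)$ is an isomorphism for every object $X$. Here $E(MX)$ is the equalizer of $\eta_{M(MX)},\ M(\eta_{MX}):M(MX)\rightrightarrows M^2(MX)$, and $j_{MX}\circ\eta^E_{MX}=\eta_{MX}$. I would exhibit the candidate inverse $\mu_X\circ j_{MX}:E(MX)\to MX$. One composite is $\mu_X\circ\eta_{MX}=\mathrm{id}_{MX}$, a unit law. For the other composite, since $j_{MX}$ is a monomorphism it suffices to verify $\eta_{MX}\circ\mu_X\circ j_{MX}=j_{MX}$; here one rewrites $\eta_{MX}\circ\mu_X=M(\mu_X)\circ\eta_{M(MX)}$ by naturality of $\eta$ along $\mu_X$, then applies the equalizer identity $\eta_{M(MX)}\circ j_{MX}=M(\eta_{MX})\circ j_{MX}$, and finishes with $\mu_X\circ\eta_{MX}=\mathrm{id}_{MX}$ again. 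So $E$ is a co-augmented functor preserving $M$.

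The main step — and the one I expect to be the real obstacle — is uniqueness: for an arbitrary $F\in\CC^{\CC}_M$ there is at most one co-augmented natural transformation $F\to E$. Via the monomorphism $j$ and the universal property of the equalizer, such a transformation is the same datum as a co-augmented $g:F\to M$ equalizing $\eta M$ and $M\eta$; and Lemma \ref{maptoequalizer} already produces one. So it suffices to prove there is at most one co-augmented $g:F\to M$ at all, and this is precisely where the monad axioms for $M$ must be used (for a merely co-augmented $M$ uniqueness fails). Given such $g$, whiskering $g\circ\eta^F=\eta$ by $M$ on the right gives $g_{MX}\circ\eta^F_{MX}=\eta_{MX}$, hence $g_{MX}=\eta_{MX}\circ\theta_X$, where $\theta=(\eta^F M)^{-1}$ is the inverse of the co-augmentation isomorphism $M\overset{\cong}{\longrightarrow}FM$ witnessing that $F$ preserves $M$. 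Then naturality of $g$ along $\eta_X:X\to MX$ yields $M(\eta_X)\circ g_X=g_{MX}\circ F(\eta_X)$; post-composing with $\mu_X$ and collapsing both sides via the two unit laws $\mu\circ M\eta=\mathrm{id}=\mu\circ\eta M$ gives $g_X=\theta_X\circ F(\eta_X)$, so $g$ is forced. Combining the two steps, $E$ is terminal in $\CC^{\CC}_M$; hence $T_M\cong E=Eq(M\rightrightarrows M^2)$, and $E$ inherits a natural monad structure. The only bookkeeping care needed is to keep whiskerings straight against component maps throughout the unit-law manipulations.
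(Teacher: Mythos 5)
Your proof is correct, and it takes a genuinely different route from the paper's. The paper compares $Eq_M$ with the already-constructed terminal object $T_M$: it produces $q:Eq_M\to T_M$ by terminality (after showing $Eq_M$ preserves $M$ via the split cosimplicial object argument, i.e.\ $M\cong \mathrm{tot}\,M^\bullet(M)$ using the extra codegeneracy given by $\mu$, with $\mathrm{tot}\,M^\bullet\cong Eq_M$ in a $1$-category), produces $t:T_M\to Eq_M$ from Lemma \ref{maptoequalizer}, and then argues the two composites are isomorphisms by a diagram chase plus the fact that the only co-augmented self-map of a terminal object is the identity. You instead verify the universal property of the equalizer directly: your Step 2 replaces the split-cosimplicial argument by the explicit two-sided inverse $\mu_X\circ j_{MX}$ of $\eta^E_{MX}$ (both unit laws plus naturality of $\eta$, all checks correct), and your key uniqueness step is a rigidity statement that the paper never isolates -- any co-augmented natural transformation $g:F\to M$ from an $F$ preserving the image of $M$ is forced to equal $(\eta^F M)^{-1}\circ F\eta$, which is essentially Fakir's lemma and pinpoints exactly where the monad unit laws enter; uniqueness of maps into $Eq_M$ then follows since $j$ is a pointwise monomorphism, and existence is Lemma \ref{maptoequalizer}, as in the paper. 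What your route buys is self-containedness: it does not presuppose the existence of $T_M$ as a limit of co-augmented functors (it reconstructs the terminal object concretely) and avoids totalization language entirely; it also yields the stronger by-product that there is at most one co-augmented map $F\to M$ at all. What the paper's route buys is that the totalization formulation is the one that survives passage to $\infty$-categories (where $Eq_M$ must be replaced by $\mathrm{tot}\,M^\bullet$), which is the generalization the authors care about; your equalizer-specific manipulations with the monomorphism $j$ would not transport directly to that setting.
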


\begin{proof}

Observe  that the diagram of  maps $M \rightrightarrows M^2$ is not, in general,
a diagram of monads. The maps in it are natural transformations of co-augmented functors. Thus the equalizer is a co-augmented functor, but it is not immediately clear why it is a monad. Fakir states this without proof. It does follow below 
from the observation  that the equalizer is naturally isomorphic to the terminal monad 
$T_M.$

 To prove that, note  that $Eq_M$ preserves $M,$ 
 i.e. $Eq_M(M)\cong M,$  were here
 $M$ denotes the underlying co-augmented functor of the monad $M.$ 
 The reason is that clearly  as co-augmented functors, there is an equivalence $M\cong tot M^\bullet (M)=lim_{\Delta} (M^\bullet (M)),$ since the latter co-simplicial functor  has an extra co-degeneracy map. But in 1-category,  $tot M^\bullet\cong Eq_M.$
 Therefore there is a unique map of co-augmented functors $Eq_M\to T_M.$

 First, we prove that this
 map is an equivalence of functors. This will endow the equalizer with a monad structure.
$$$$

The monad structure on $T_M$ comes from the natural map $T_MT_M\to T_M$ given by 
the universal property of the range. Since the identity is only  self-map $T_M \to T_M,$ the last map satisfies the monad conditions.

%More explicitly by  the assembly map, using the above result $Eq_M\cong T_M$:

%$$
%$$ T_MT_M=Eq_MEq_M\to Eq(Eq_M(M) %\rightrightarrows  Eq_M(M^2))=Eq(T_M(M) \%rightrightarrows  T_M(M^2))=T_M.
%$$

$$$$
Now consider the following  maps (=natural transformations) of co-augmented  functors:

$$
Eq_M \overset q\longrightarrow T_M\overset t \longrightarrow Eq_M\overset q\longrightarrow T_M
$$
The map $q$ is uniquely guaranteed by the equation $Eq_M\circ M\cong M,$ since $T_M$ is terminal {\em co-augmented functor} with this property.

The map $t$ is given by the universal property  of $Eq_M.$ See lemma \ref{maptoequalizer} above. Namely, since the monoid $T_M$ preserves $M,$ ($T_M\circ M\cong M$) so it preserves  the  co-simplicial object $M^\bullet.$ 
 When we apply $T_M$ to $Id\to M^\bullet$  we get a map of $T_M$ to the limit of $M^\bullet$, which in our case is the equalizer $Eq_M.$ 

In the  above composition of three natural transformations, the induced self-map of $T_M$ is the identity since $T_M$ is a terminal object.
We claim that the composition 
$t\circ q$ is equivalent to the identity.

To see that, consider the diagram:

$$$$
\[
\xymatrix{
Eq_M\ar[d]\ar[rr]^q&&T_M\ar[d]\ar[rr]^t&& Eq_M\ar[d]\\
Eq_M\circ M\cong M\ar@<-0.5ex>[d]\ar@<0.5ex>[d]^g\ar[rr]^\cong &&T_M\circ M\ar@<-0.5ex>[d]\ar@<0.5ex>[d]^f\ar[rr]^\cong &&M \ar@<-0.5ex>[d]\ar@<0.5ex>[d]^g\\
Eq_M\circ M^2\cong T_M\circ M^2\ar[rr]^\cong &&T_M\circ M^2\ar[rr]^\cong &&M^2}
\]
$$$$

gotten by applying the above maps: $Eq_M\to T_M\to Eq_M$ to the 

natural transformations:

$$
Id\to M\rightrightarrows M^2.
$$

Since all the horizontal  maps in diagram that involve $M,M^2,$ and denoted by $"\cong" $ are equivalences, the required self-map
on the equalizer, $Eq_M$ is also an equivalence as needed. Thus we conclude that the maps $t,s$ are 
equivalences of co-augmented functors. It follows that $Eq_M$
has a structure of a monad coming from that of $T_M,$ and the
two are equivalent as a monad, as stated.
\end{proof}

$$$$

The terminal monad  $T_M,$ has the following properties:
$$$$

\begin{theorem}\label{monadmapTMtoM}
For any  monad $M,$ the above map $T_M\to M$, \ref{monadmapTMtoM},  is a map of monads. 
The assignment $M\mapsto T_M$
gives an  endo-functor $MonC\to Mon C,$
together with a natural transformation $T\to Id,$ namely, an {\em augmented endo-functor} $T$ on $Mon C.$

\end{theorem}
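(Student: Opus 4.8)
The plan is to reduce everything to a single uniqueness statement about co-augmented natural transformations, after which the terminality of the various $T_M$ does the rest, exactly as in the proofs of Propositions~\ref{D-completion} and~\ref{equalizerequaterminal}. \emph{Step 1 (the key lemma).} First I would isolate the following: if $M$ is a \emph{monad} and $G$ is any co-augmented functor preserving $M$ (i.e.\ $M(c)\in{\sf Inv}(G)$ for all $c$, equivalently $\eta^G M\colon M\to GM$ a natural isomorphism), then there is a \emph{unique} morphism of co-augmented functors $G\to M$, namely $\psi\colon G\xrightarrow{\,G\eta^M\,}GM\xrightarrow{\,(\eta^G M)^{-1}\,}M$. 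Existence, and the fact that $\psi$ is co-augmented, are a one-line chase using naturality of $\eta^G$. For uniqueness, let $\phi\colon G\to M$ be co-augmented; naturality of $\phi$ against the morphism $\eta^M_c\colon c\to M(c)$ followed by $\mu_M$, together with the unit law $\mu_M\circ M\eta^M={\sf id}_M$, gives $\phi_c=(\mu_M)_c\circ\phi_{M(c)}\circ G(\eta^M_c)$; rewriting $G(\eta^M_c)=\eta^G_{M(c)}\circ\psi_c$ and using co-augmentedness of $\phi$ at the object $M(c)$ together with the other unit law $\mu_M\circ\eta^M M={\sf id}_M$ collapses this to $\phi=\psi$. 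This is the only place where the monad structure on $M$, as opposed to a bare co-augmentation, is genuinely used, and the statement fails for general co-augmented $M$.

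\emph{Step 2 ($j_M\colon T_M\to M$ is a map of monads).} Note first that the canonical map $j_M$, obtained by applying $T_M$ to ${\sf Id}\to M$ and using $T_M M\cong M$, is exactly the map $\psi$ of Step~1 for $G=T_M$; in particular it is a morphism of co-augmented functors, which already gives compatibility with the units. For compatibility with the multiplications I would observe that $T_MT_M$ preserves $M$: since $T_M(M(c))\cong M(c)$ lies in ${\sf Inv}(T_M)$, which is closed under isomorphism, we get $T_M(T_M(M(c)))\cong T_M(M(c))\cong M(c)$. Now both $j_M\circ\mu_{T_M}$ and $\mu_M\circ(Mj_M\circ j_MT_M)$ are morphisms of co-augmented functors $T_MT_M\to M$ — a routine check from the monad unit laws and co-augmentedness of $j_M$ — so Step~1 applied with $G=T_MT_M$ forces them to coincide. (One could alternatively argue via $T_M\cong Eq_M$ and the fact that a subfunctor inclusion is objectwise monic, but Step~1 is cleaner and is needed below anyway.)

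\emph{Step 3 (functoriality of $M\mapsto T_M$ and naturality of $T\to{\sf Id}$).} That a monad map $f\colon M\to N$ induces a monad map $T_f\colon T_M\to T_N$ is Proposition~\ref{NisanMalg}. For $T$ to be a functor $\mathrm{Mon}\,\CC\to\mathrm{Mon}\,\CC$ it remains to check $T_{{\sf id}_M}={\sf id}$ and $T_{gf}=T_g\circ T_f$; by Proposition~\ref{NisanMalg} the monad $T_M$ preserves $M$, $N$, and $P$ whenever there are monad maps $M\to N\to P$, so $T_M$ admits a unique co-augmented natural transformation into each of $T_M$, $T_N$, $T_P$, and this uniqueness forces both equalities (two monad maps with the same underlying co-augmented transformation agree). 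Finally, for naturality of the family $\{j_M\}$ one checks that the square with sides $T_f$, $j_N$, $j_M$, $f$ commutes; its two legs are morphisms of co-augmented functors $T_M\to N$ (here one uses that $f$, being a monad map, is co-augmented), and Step~1 with $G=T_M$ and the monad $N$ forces them equal. Together with Step~2, this exhibits $T$ as an augmented endo-functor $T\to{\sf Id}$ on $\mathrm{Mon}\,\CC$.

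I expect the substantive point to be Step~1: recognizing that a monad $M$, equipped with its co-augmentation, is ``terminal with uniqueness'' when probed by co-augmented functors that preserve it; this is where the monad axioms (not just a co-augmentation) are really needed, and it is what makes $j_M$ a map of monads and makes all the comparison squares commute. Everything else is bookkeeping, the only mildly delicate part being the verification that the various composites and whiskerings to be compared are honestly morphisms of co-augmented functors, so that the terminality arguments of the previous sections apply verbatim.
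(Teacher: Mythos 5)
Your proof is correct, but it takes a genuinely different route from the paper's. The paper derives the theorem from the identification $T_M\cong Eq(M\rightrightarrows M^2)$ of Proposition \ref{equalizerequaterminal}: functoriality of $M\mapsto T_M$ is obtained from functoriality of the equalizer (or from Proposition \ref{NisanMalg}), and compatibility with the multiplications is read off a diagram relating $T_MT_M$, $Eq_{M^2}$ and $M^2$, using that $Eq_{(-)}$ is itself an augmented functor on co-augmented functors together with the terminality of $T_M$. You bypass the equalizer formula entirely: your Step 1 --- for a monad $M$, any co-augmented $G$ with $\eta^G M$ invertible admits a \emph{unique} co-augmented map $G\to M$, necessarily $(\eta^G M)^{-1}\circ G\eta^M$ --- is proved directly from the two unit laws and naturality, and it generalizes the uniqueness argument the paper carries out only for idempotent $L$ in Proposition \ref{localizationisterminal}. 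Once that lemma is in place, the monad compatibility of $j_M$, the identities $T_{gf}=T_g\circ T_f$ and $T_{{\sf id}}={\sf id}$, and the naturality of $T\to {\sf Id}$ all reduce to uniqueness of co-augmented maps into $M$, $N$ or into the terminal objects $T_N$, $T_P$. I checked the points you flagged as routine and they do go through: $T_MT_M$ preserves $M$ because ${\sf Inv}(T_M)$ is closed under isomorphism and the co-augmentation of a composite is the composite of co-augmentations; both candidate composites $T_MT_M\to M$ are co-augmented by the unit laws; and both unit laws of $M$ are genuinely needed in Step 1, which is exactly where the monad hypothesis enters. What the paper's route buys is the explicit equalizer formula (useful elsewhere) and functoriality essentially for free; what yours buys is independence from Proposition \ref{equalizerequaterminal} --- only the existence and terminality of $T_M$ and Proposition \ref{NisanMalg} are used --- plus the sharper observation that $T_M\to M$ is the \emph{only} co-augmented map, which makes all the comparison squares commute by pure uniqueness rather than diagram chasing.
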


\begin{proof}

 By   \ref{NisanMalg} above,  $T_M $  is natural in the variable $M.$ Or, since $Eq_M$  is functorial in $M,$  so $T_M$ is by the above theorem \ref{equalizerequaterminal}. Hence the assigment:  $T: M\mapsto T_M$ defines an endo-functor on $Mon\CC:$
For a given  map of monads $M\to M'$
one  gets a map of co-simplicial
resolutions $M^\bullet\to M'^\bullet.$
Or, since $T_M$ is naturally equivalent to the  equalizer 
of $M\rightrightarrows M^2,$ we get a well-defined map on the terminal monads 
 with a natural map $m_M: T_M\to M,$ as needed. 

We now prove that this  map $T_M\to M$
is a map  of monads, namely, respect the monad structure
$M^2\to M.$

Now since $Eq_{(-)}$ is a co-augmented functor we get
a commutative diagram involving $Eq_{M^2}$ by applying this  functor to $M\rightrightarrows M^2.$
The natural   map $Eq_MEq_M\to Eq_{M^2,}$
completes the argument, giving the necessary commutation of the monad  structures. 

In more detail, for any map of co-augmented functors, such as $m_M: T_M\to M,$
the corresponding co-faces maps  $M\rightrightarrows M^2$ etc. commutes
with $m_M$ and $m_{T_M.}$

Consider the natural diagram:

$$$$
\[
\xymatrix{
T_M\ar[d]^=&&T_MT_M\ar[ll]_{\mu_1}
\ar[d]^{\mu_3}\\
{T_M\cong Eq_M}\ar[d]^{m_{M}=\iota_M} &&Eq_{M^2}\ar[ll]_{\mu_2}\ar[d]^{\iota_{M^2}} %M\ar@<-0.5ex>[d]\ar@<0.5ex>[d]^f\ar[rr]^\cong &&M \ar@<-0.5ex>[d]\ar@<0.5ex>[d]^g
\\
M && M^2\ar[ll]_{\mu}
}
\]
$$$$

To prove that $T_M\to M$ is a map of monads, one only needs to show that the monad 
 structures of $M$ and $T_M,$ written as $\mu,\mu_1,$   are compatible with the natural map $T_M\to M.$ Namely, that the two  composition arrows, involving $\mu$ and $\mu_1,$
 of $T_MT_M\to M$
 in the diagram below are equal. Namely, that the outer square of the maps below commutes.
 
 Consider the diagram below:
The arrows $\mu_1,\mu_3$ are defined by the universal properties of $T_M$ and the equalizer, correspondingly,   since their common 
domain preserves $M$ and $M^2,$  using \ref{lemma:equalizer} above.  The map ${\iota_{M^2}}$ is the inclusion of the equalizer. 
%In addition by uniqueness, one has: $\mu_2=T(\mu).$ 
%The map $\mu_3$
%is defined again by the universal property of its range, being an equalizer since its domain preserves $M$ and $M^2.$

Note that the bottom square commutes since here the equalizer  $Eq_{(-)}$ is considered  here as a co-augmented functor,$Eq_{(-)}\to Id,$  from the category of co-augmented functors over $\CC$ to itself. 

The top square below commutes by the terminal property of its bottom left corner $T_M,$ admitting only one map from the functor $T_MT_M$ at the top right corner.

Thus the whole diagram commute as needed.

\end{proof}

$$$$
\subsubsection{Example} Consider the ultrafilter monad $X\to U(X)$ discussed elsewhere in this paper. The associated terminal monad $T_U$ preserves the image of $U$ which includes sets of arbitrary cardinality. Hence it preserves all sets and must be the identity monad. In fact, the equalizer $Eq_U$ is easily seen to be the identity functor on sets, as it includes only principal ultrafilters. On the other hand, the terminal monad associated to the 
profinite completion  $G\to pro-G\equiv \widehat G$ in the category of groups is not the identity functor since 
on a finitely presented group $T_{pro}(G)\cong  pro-G$ since the completion is idempotent on these groups so that the equalizer of
$pro-G\rightrightarrows pro-(pro-G) $ is $pro-G$ itself, the completion of $G.$

$$$$

\section{Explicit expressions for \texorpdfstring{$T_d$}{}}
Here we explicitly express the terminal monad, associated with an object $d\in \CC,$ as a
structured double dual, see below, as opposed to the usual one as in \cite{adamek2021d}, section 2.

$$$$

For a set $S$ and an object $c\in {\sf Ob}(\CC)$ we denote by $c^S$ the product of copies of $c$ indexed by $S$
\begin{equation}
   c^S = \prod_{s\in S} c.
\end{equation}
The projections will be denoted by ${\sf pr}_s: c^S\to c.$
A morphism $f:c'\to c^S$ is defined by a family of morphisms $f_s={\sf pr}_s\circ  f:c'\to c$ which are called components of $f.$ The object $c^S$ is contravariant by $S.$
More precisely, this defines a functor 
\begin{equation}
\CC \times {\sf Set}^{\sf op}\to \CC,\hspace{1cm} (c,S)\mapsto c^S    
\end{equation}
such that, if $f:S\to S'$ is a function, then the morphism $c^f:c^{S'}\to c^{S}$ is defined so that $(c^f)_s={\sf pr}_{f(s)}.$

\begin{proposition}
Let $\CC$ be a  category  with limits and $\DD$ be a small full subcategory of $\CC.$ Then the functor of $\DD$-completion exists and it is given by the end
\begin{equation}
T(c) = \int_{d\in \DD} d^{\CC(c,d)}.  
\end{equation}
It can be also presented as an equalizer 
\begin{equation}
T(c) = {\sf eq}\left( \prod_{d\in \DD} d^{\CC(c,d)}  \rightrightarrows \prod_{\alpha:d_1\to d_2} d_2^{\CC(c,d_1)} \right),
\end{equation}
where the first morphism is induced by $ d_1^{\CC(c,d_1)} \xrightarrow{\alpha^{\CC(c,d_1)}} d_2^{\CC(c,d_1)} $ and the second morphism is induced by $d_2^{\CC(c,d_2)} \xrightarrow{d_2^{\CC(c,\alpha)}} d_2^{\CC(c,d_1)}.$
\end{proposition}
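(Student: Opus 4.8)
The plan is to recognize this as a standard computation of a right Kan extension (equivalently an end) and to assemble it from pieces already available in the excerpt. First I would invoke the pointwise formula for the right Kan extension: since $T_\DD = {\sf Ran}_I(I)$ and $\DD$ is small, $\CC$ has limits, the extension exists and is computed pointwise as the end $T(c)=\int_{d\in\DD} I(d)^{\CC(c,Id)}=\int_{d\in\DD} d^{\CC(c,d)}$. Concretely this follows from the limit-over-comma-category formula \eqref{limitovercat}, $T(c)={\sf lim}(c\downarrow\DD\to\DD)$, by unwinding the objects of the comma category: an object is a pair $(d,f)$ with $f\colon c\to d$, so that the limit is indexed by such pairs with the evident morphisms, and this limit is exactly the coend-free description of the end $\int_{d}d^{\CC(c,d)}$. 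So the first displayed equation is essentially a citation of \cite[Ch.~X]{mac2013categories} together with the fact that full faithfulness of $I$ is irrelevant for the \emph{existence} and pointwise computation (it was only used earlier to show $\varepsilon$ is iso).

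Next I would translate the end into the equalizer. This is the wedge/equalizer presentation of an end: for a functor $H\colon\DD^{\sf op}\times\DD\to\CC$, one has $\int_d H(d,d)={\sf eq}\bigl(\prod_{d}H(d,d)\rightrightarrows\prod_{\alpha\colon d_1\to d_2}H(d_1,d_2)\bigr)$, where the two maps are the two ways of going around $H(d_1,d_1)\to H(d_1,d_2)$ and $H(d_2,d_2)\to H(d_1,d_2)$. Here $H(d',d)=d^{\CC(c,d')}$, which is contravariant in $d'$ (via precomposition $\CC(c,\alpha)\colon\CC(c,d_1)\to\CC(c,d_2)$ reindexing the power) and covariant in $d$ (via $\alpha^{\CC(c,d_1)}$ postcomposing each component with $\alpha$). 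Plugging in gives exactly the two stated maps into $\prod_{\alpha\colon d_1\to d_2} d_2^{\CC(c,d_1)}$: one leg is $\prod_d d^{\CC(c,d)}\twoheadrightarrow d_1^{\CC(c,d_1)}\xrightarrow{\alpha^{\CC(c,d_1)}} d_2^{\CC(c,d_1)}$ and the other is $\prod_d d^{\CC(c,d)}\twoheadrightarrow d_2^{\CC(c,d_2)}\xrightarrow{d_2^{\CC(c,\alpha)}} d_2^{\CC(c,d_1)}$, matching the proposition verbatim.

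The only genuine content to check is that these two parallel maps are well defined and that their equalizer really is the limit over $c\downarrow\DD$ — i.e. that the equalizer condition "$x_{d_1}$ and $x_{d_2}$ agree after transport along every $\alpha$" is precisely the naturality/cone condition for a cone over the projection functor $c\downarrow\DD\to\DD$. I would spell this out by noting that a point of $\prod_d d^{\CC(c,d)}$ is a family $(x_d)_{d\in\DD}$ with $x_d\in d^{\CC(c,d)}$, equivalently a family of components $x_d(f)\in d$ for each $f\colon c\to d$; the first map sends this to $\bigl(\alpha\circ x_{d_1}(f)\bigr)_{f\colon c\to d_1}$ and the second to $\bigl(x_{d_2}(\alpha\circ f)\bigr)_{f\colon c\to d_1}$, so equality says $\alpha\circ x_{d_1}(f)=x_{d_2}(\alpha f)$ for all $\alpha\colon d_1\to d_2$ and all $f\colon c\to d_1$ — which is exactly the statement that $(d,f)\mapsto x_d(f)$ is a cone from $c$ over $c\downarrow\DD\to\DD$. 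Hence the equalizer is the universal such cone, namely $T(c)$.

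The main obstacle, such as it is, is purely bookkeeping: keeping the variances straight — which product index ($\CC(c,d_1)$ vs.\ $\CC(c,d_2)$) survives in the codomain term $d_2^{\CC(c,d_1)}$, and checking that the two competing maps land in the \emph{same} object so that the equalizer makes sense. Once the identification of the end's wedge condition with the comma-category cone condition is made explicit, existence and both formulas follow immediately from the previously cited pointwise Kan extension formula and the standard end-as-equalizer lemma; no further argument is needed.
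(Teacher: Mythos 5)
Your proposal is correct and follows essentially the same route as the paper, whose proof is just the two citations you invoke: the pointwise right Kan extension computed as an end (Mac Lane, Ch.\ X) together with the standard end-as-equalizer presentation; you merely spell out the wedge/cone bookkeeping that the paper delegates to the references. The only cosmetic caveat is that your ``point of $\prod_d d^{\CC(c,d)}$'' argument should be read via generalized elements (morphisms $e\to\prod_d d^{\CC(c,d)}$) in an arbitrary complete category, which changes nothing of substance.
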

\begin{proof}
Follows from the interpretation of right Kan extensions in terms of ends \cite[Ch. X, \S 4, Th.1]{mac2013categories} and the characterisation of ends in terms of equalisers \cite[Remark 1.2.4]{loregian2015coend}. 
\end{proof}

\begin{corollary}\label{cor:one_object}
Let $\CC$ be a complete category and $\DD=\{d\}$ the full subcategory consisting of one object. Then $T_d=T_{\DD}$ exists and
\begin{equation}
T_d(c) \cong {\sf eq}\left(  d^{\CC(c,d)}  \rightrightarrows  \left(d^{\CC(c,d)}\right)^{ {\sf End}_\CC(d)} \right).
\end{equation}
\end{corollary}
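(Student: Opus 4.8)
The plan is to specialize the preceding proposition to the case $\DD=\{d\}$, a single object with its full endomorphism monoid $E:=\mathsf{End}_\CC(d)$. First I would observe that the general formula
\[
T(c) = \mathsf{eq}\left( \prod_{d'\in \DD} (d')^{\CC(c,d')}  \rightrightarrows \prod_{\alpha:d_1\to d_2} d_2^{\CC(c,d_1)} \right)
\]
collapses dramatically when $\DD$ has only the object $d$: the outer product over objects of $\DD$ becomes the single factor $d^{\CC(c,d)}$, and the product over morphisms $\alpha:d_1\to d_2$ of $\DD$ becomes a product over $\alpha\in E$, each contributing the factor $d^{\CC(c,d)}$. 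Hence the right-hand side is $\bigl(d^{\CC(c,d)}\bigr)^{E}$, which is exactly $\left(d^{\CC(c,d)}\right)^{\mathsf{End}_\CC(d)}$. So the shape of the equalizer diagram is already correct; the remaining work is to identify the two parallel maps.

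Next I would unwind the two morphisms in the general proposition in this special case. The first is induced, on the $\alpha$-component, by post-composition $d^{\CC(c,d)} \xrightarrow{\alpha^{\CC(c,d)}} d^{\CC(c,d)}$, i.e. by applying $\alpha$ coordinatewise to an element of $d^{\CC(c,d)}$; the second is induced by the reindexing map $d^{\CC(c,d)} \xrightarrow{d^{\CC(c,\alpha)}} d^{\CC(c,d)}$ coming from precomposition $\CC(c,\alpha):\CC(c,d)\to\CC(c,d)$, $g\mapsto \alpha\circ g$. I would note that both of these are precisely the structure maps appearing in the end formula $T_d(c) = \int_{d} d^{\CC(c,d)}$, and that the equalizer presentation of an end over a one-object category is exactly the statement that an element of the end is a point of $d^{\CC(c,d)}$ on which the two actions of $E$ — the "action on the target copy of $d$" and the "action on the indexing set $\CC(c,d)$" — agree. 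This is the content of \cite[Remark 1.2.4]{loregian2015coend} applied to the one-object indexing category $BE$, but it also follows directly from the displayed equalizer in the proposition by the index-set collapse described above.

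The one genuine point to check — and the place I expect the only friction — is that the finiteness/smallness hypothesis of the preceding proposition is met: it requires $\DD$ to be a \emph{small} full subcategory, and here $\DD=\{d\}$ is obviously small, with $\CC$ complete, so the $\DD$-completion exists and the end/equalizer formula applies verbatim. I would also remark that $\CC(d,d)=E$ need not be a set-sized issue since $\CC$ is locally small by standing assumption. Given all this, the proof is essentially a two-line invocation: apply the proposition with $\DD=\{d\}$, collapse the products indexed by $\mathsf{Ob}(\DD)$ and $\mathsf{Mor}(\DD)$, and read off that $\mathsf{Mor}(\DD)=\mathsf{End}_\CC(d)=E$, yielding
\[
T_d(c) \cong \mathsf{eq}\left(  d^{\CC(c,d)}  \rightrightarrows  \left(d^{\CC(c,d)}\right)^{ \mathsf{End}_\CC(d)} \right),
\]
with the two maps as identified above. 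Thus no real obstacle arises; the corollary is a direct unpacking of the general formula, and I would present it as such.
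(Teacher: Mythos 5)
Your proposal is correct and is exactly how the paper obtains this corollary: it is stated as an immediate specialization of the preceding proposition, with the product over objects collapsing to the single factor $d^{\CC(c,d)}$ and the product over morphisms collapsing to the power indexed by ${\sf End}_\CC(d)$, the two parallel maps being coordinatewise postcomposition by $\alpha$ and reindexing along $\CC(c,\alpha)$. No further argument is needed beyond what you give.
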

$$$$
\subsubsection{Remark.}   
There is an alternative description of $T_d$ using the double dual monad. Denote by  $DD_d(c)$ the "naive double dual"  monad $c\to d^{\CC(c,d)}=\Pi_{c\to d}d.$ Then
it is not hard to see using  the arguments in \ref{equalizer} that  $T_d$ is equivalent to the terminal monad $T_{DD_d}$
associated with $DD_d.$  Notice that $DD_d(d)$ is isomorphic to $d^l$ for some $l\geq 1.$ Hence, since  the terminal  $T_{DD_d}$ preserves the image of ${DD_d},$  $T_{DD_d}$ also preserves its retract
$d,$ and one has a unique map of monads $Eq_{DD_d}\cong T_{DD_d}\to T_d.$ Similarly there is a map in the other direction: Namely, the desired  element in the monad category $Mon\CC$:

$$Mon(T_d,DD_d)=
Mon(T_d, d^{\CC(c,d)})$$

is determined by the composition of maps in a $\CC$:

$$T_d(c)\times \CC(c,d)\to T_d(c)\times \CC (T_d(c), T_d(d)=d) \overset {eval}\longrightarrow  d.$$\\

Or, stated otherwise, the map $T_d(c)\to \Pi_{c\to d}d$ is give factor-wise by: $T_d(c)\to d=  T_d(c\to d),$  since $T_d(d)\cong d.$ This also  can serve  to prove  the crucial property of $T_d$ namely, for any map $x\to y$
in $\CC$  with $\CC(y,d)\cong \CC(x,d)$
satisfies $T_d(x)\cong T_d(y);$ which is of course evident from the explicit expression for $T_d$ above. It is clear, though not expanded here, that, in the case where the category $\CC$ is enriched over itself, the above approach works well, using internal  hom objects $hom(hom(-,d),d).$

\section{Examples in the category of sets and groups}

\subsection{Variations on the set of ultrafilters.}
For any set $X$ we denote by $\PP(X)$ the set of all subsets of $X$. We treat $\PP$ as a (contravariant) functor
\begin{equation}
    \PP:{\sf Set}^{\sf op} \longrightarrow {\sf Set},
\end{equation}
where, for a map  $f:X\to X',$ the map $\PP(f):\PP(X')\to \PP(X)$ is defined as $\PP(f)(Y)=f^{-1}(Y).$ Note that the characteristic function defines an isomorphism 
\begin{equation}
\chi: \PP(X) \cong {\sf Set}(X,2).    
\end{equation}
The composition 
\begin{equation}
\PP \PP : {\sf Set} \longrightarrow {\sf Set}    
\end{equation}
is a (covariant) functor that has a natural coaugmentation 
\begin{equation}
    \eta_X : X\longrightarrow \PP(\PP(X))
\end{equation}
such that $\eta_X(x)$ is the set of all sets $Y\subseteq X$ containing $x.$ If we denote by $\UF(X)$ the set of ultrafilters on $X,$ we obtain that $\UF$ is an co-augmented sub-functor of $\PP\PP$
\begin{equation}
    \UF\subseteq \PP\PP.
\end{equation}

Recall that $\UF(X)$ can be thought of as the underlying set of the Stone-\v{C}ech compactification of the set $X.$

$$$$

Let us define another co-augmented sub-functor of $\PP\PP.$ An element $A\in \PP(\PP(X))$ is called \emph{ultraset}, if 
\begin{itemize}
    \item[(US1)] $\emptyset \notin A;$
    \item[(US2)] for any $Y\subseteq X$ one and only one of the sets $Y,X\setminus Y$ is an element of $A.$
\end{itemize}
Note that the axiom (US1) can be equivalently replaced by 
\begin{itemize}
    \item[(US1')] $X\in A.$
\end{itemize}

\begin{example}
Any ultrafilter is an ultraset.
\end{example}
\begin{example} Let  $X$ is a finite set of an odd cardinality $|X|=2n+1$ and
\begin{equation}
A= \{Y\subseteq X\mid |Y|\geq n+1\}.    
\end{equation}
Then $A$ is an ultraset on $X$ which is not an ultrafilter  for $n\geq 1.$ 
\end{example}

\begin{lemma}\label{lemma:ultrafilter}
Let $A$ be an ultraset on a set $X.$ Then $A$ is an ultrafilter if and only if  for any partition into three disjoint subsets $X=P_0\sqcup P_1 \sqcup P_2$, there exists a unique $i\in \{0,1,2\}$ such that $P_i\in A.$ 
\end{lemma}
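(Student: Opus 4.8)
The plan is to characterize ultrafilters among ultrasets by the "three-piece" partition property, exploiting the well-known fact that a nonempty family $A\subseteq \PP(X)$ is an ultrafilter iff it is closed under supersets, closed under binary intersection, and is "prime" in the sense that for each $Y$ exactly one of $Y, X\setminus Y$ lies in $A$. Since $A$ is already assumed to be an ultraset, (US1) and (US2) give us the primeness and $\emptyset\notin A$, $X\in A$. So the real content is: for an ultraset $A$, closure under supersets and closure under binary intersection together are equivalent to the stated partition condition.

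First I would prove the easy direction: if $A$ is an ultrafilter and $X = P_0\sqcup P_1\sqcup P_2$, then since $P_0\cup P_1$, $P_0\cup P_2$, $P_1\cup P_2$ cover $X$ in pairs and $A$ is prime, a short case analysis (using that $A$ is a proper filter, so cannot contain two disjoint sets) shows exactly one $P_i\in A$. Concretely, $X\setminus P_i = P_j\cup P_k$; at most one $P_i$ can belong to $A$ because $P_i\cap P_j=\emptyset$ and $\emptyset\notin A$; and at least one must, since if none of $P_0,P_1,P_2\in A$ then by (US2) all three complements $P_j\cup P_k\in A$, and intersecting two of them, say $(P_0\cup P_1)\cap(P_0\cup P_2) = P_0\in A$ (using closure under intersection), a contradiction.

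For the converse, assume $A$ is an ultraset satisfying the three-piece condition; I must derive that $A$ is an ultrafilter, i.e. recover upward closure and closure under binary intersections. For upward closure: given $Y\in A$ and $Y\subseteq Z\subseteq X$, apply the condition to the partition $X = Y \sqcup (Z\setminus Y) \sqcup (X\setminus Z)$ (dropping any empty blocks, or handling the degenerate cases by (US2) directly); since $Y\in A$, uniqueness forces $Z\setminus Y\notin A$ and $X\setminus Z\notin A$, hence by (US2) $Z\in A$. For closure under binary intersection: given $Y_1,Y_2\in A$, I want $Y_1\cap Y_2\in A$. Consider the partition $X = (Y_1\cap Y_2)\sqcup (Y_1\setminus Y_2)\sqcup (X\setminus Y_1)$. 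By (US2), $Y_1\setminus Y_2 = Y_1\cap(X\setminus Y_2)$, and I claim this is not in $A$: if it were, then since $Y_1\setminus Y_2\subseteq X\setminus Y_2$, upward closure (just established) would give $X\setminus Y_2\in A$, contradicting $Y_2\in A$ via (US2). Likewise $X\setminus Y_1\notin A$ since $Y_1\in A$. So by the uniqueness clause of the three-piece condition, the remaining block $Y_1\cap Y_2$ must be the one in $A$.

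The main obstacle I expect is bookkeeping around degenerate partitions — when one or two of the $P_i$ are empty, the hypothesis as literally stated ("partition into three disjoint subsets", with the blocks perhaps required nonempty) may or may not apply, so I would either note that the empty cases reduce immediately to (US1)/(US2), or restate the condition to allow empty blocks and observe this is harmless since $\emptyset\notin A$ already pins down empty blocks. A secondary subtlety is making sure the partitions I write down in the converse are genuinely disjoint unions covering $X$, which is routine set algebra. With upward closure and binary intersection in hand, together with $X\in A$ and $\emptyset\notin A$ from the ultraset axioms, $A$ is a proper filter, and (US2) upgrades it to an ultrafilter, completing the equivalence.
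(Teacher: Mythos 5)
Your proof is correct and follows essentially the same route as the paper: the forward direction uses closure under intersection plus the contradiction $(X\setminus P_1)\cap(X\setminus P_2)=P_0$, and the converse establishes upward closure via the partition $Y\sqcup(Z\setminus Y)\sqcup(X\setminus Z)$ and then intersection via $(Y_1\cap Y_2)\sqcup(Y_1\setminus Y_2)\sqcup(X\setminus Y_1)$, exactly as in the paper. One tiny remark: in the intersection step, concluding that the remaining block lies in $A$ invokes the \emph{existence} part of the three-piece condition (not its uniqueness clause), and your worry about empty blocks is harmless since (US1) already excludes $\emptyset$ from $A$.
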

\begin{proof} Assume that  $A$ is an ultrafilter and $X=P_0\sqcup P_1 \sqcup P_2$ is a partition. If there exists $i$ such that $P_i\in A,$ then it is obviously unique because $A$ is closed under finite intersections.  Let us prove that it exists. Assume the contrary that $P_i\notin A$ for any $i.$ Then $X\setminus P_i \in A,$ and hence $P_0=(X\setminus P_1) \cap (X\setminus P_2)\in A,$ which is a contradiction.

Now assume that for any partition into three disjoint subsets $X=P_0\sqcup P_1 \sqcup P_2$, there exists a unique $i\in \{0,1,2\}$ such that $P_i\in A.$ In order to prove that $A$ is an ultrafilter, we need to prove that: (1)  $Y\in A$ and $Y\subseteq Y'\subseteq X$ implies $Y'\in A;$ (2) $Y, Y'\in A$ implies $Y\cap Y'\in A.$ 

Let us prove (1). Take $P_0=Y,$ $P_1=Y'\setminus Y$ and $P_2=X\setminus Y'.$ Then $P_0\in A,$ and hence, $P_2\notin A.$ By (US2) we obtain  $X\setminus P_2=Y'\in A.$ 

Let us prove (2). In the proof, we use that we already proved (1).  Take $P_0=Y\cap Y',$ $P_1=Y\setminus Y',$ $P_2=X\setminus Y.$ Since $Y\in A,$ we have $P_2\notin A.$ Therefore either $P_0\in A,$ or $P_1\in A.$ We need to prove that $P_0\in A.$ Assume the contrary that $P_1\in A.$ Since $P_1 \subseteq  X\setminus Y',$ using (1), we obtain $X\setminus Y'\in A.$ It follows that $Y'\notin A,$ which is a contradiction. Hence $P_0\in A.$ 
\end{proof}

The set of all ultrasets on $X$ is denoted by $\US(X).$   It is easy to check that $\US$ is a co-augmented sub-functor of $\PP\PP$
\begin{equation}
\UF \subseteq \US \subseteq \PP\PP.
\end{equation}

Let $n$ be a natural number, taken  as  an ordinals $n=\{0,\dots,n-1\}.$ We denote by $T_n:{\sf Set}\to {\sf Set}$ the functor of $\{n\}$-completion i.e. it is the terminal co-augmented functor with the property that $n\to T_n(n)$ is an isomorphism  $n.$ 

\begin{lemma}
Let ${\sf Fin}_{\leq n}$ denotes the class of finite sets of cardinality at most $n.$ Then 
\begin{equation}
    T_n = T_{{\sf Fin}_{\leq n}}.
\end{equation}
\end{lemma}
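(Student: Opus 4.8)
The claim is that $T_n = T_{{\sf Fin}_{\leq n}}$, i.e. the terminal co-augmented functor fixing the single object $n$ coincides with the terminal co-augmented functor fixing \emph{all} finite sets of cardinality $\leq n$. Since ${\sf Fin}_{\leq n} \supseteq \{n\}$, every co-augmented functor preserving all of ${\sf Fin}_{\leq n}$ in particular preserves $n$; hence by the universal property of $T_n$ (Proposition~\ref{D-completion}) there is a unique map $T_{{\sf Fin}_{\leq n}} \to T_n$ of co-augmented functors. The plan is to produce a map in the other direction, which by terminality must be inverse to this one.

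The heart of the matter is therefore the purely combinatorial observation: if a co-augmented functor $F$ satisfies $n \cong F(n)$ (via the co-augmentation), then automatically $m \cong F(m)$ for every $m \leq n$. First I would reduce to showing $\DD = \{n\}$ and $\DD' = {\sf Fin}_{\leq n}$ determine the same class ${\sf Inv}(-)$-membership requirement, i.e. that $\{n\} \subseteq {\sf Inv}(F)$ forces ${\sf Fin}_{\leq n} \subseteq {\sf Inv}(F)$. The key step is that for $m < n$, the set $m$ is a \emph{retract} of $n$ in ${\sf Set}$: choose the inclusion $\iota: m \hookrightarrow n$ and any retraction $r: n \to m$ (collapse the extra points onto, say, $0$), so $r \iota = {\sf id}_m$. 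Applying the co-augmented functor $F$ and using naturality of $\eta^F$ gives a commuting square exhibiting $\eta^F_m$ as a retract of $\eta^F_n$; since $\eta^F_n$ is an isomorphism and retracts of isomorphisms are isomorphisms (Lemma~\ref{lemma:retract}), $\eta^F_m$ is an isomorphism too. Since every finite set of cardinality $\leq n$ is isomorphic to some such $m$, and ${\sf Inv}(F)$ is clearly closed under isomorphism, we get ${\sf Fin}_{\leq n} \subseteq {\sf Inv}(F)$.

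Applying this to $F = T_n$ itself shows ${\sf Fin}_{\leq n} \subseteq {\sf Inv}(T_n)$, so $T_n$ is a co-augmented functor preserving all of ${\sf Fin}_{\leq n}$; by the universal property of $T_{{\sf Fin}_{\leq n}}$ (again Proposition~\ref{D-completion}) there is a unique map $T_n \to T_{{\sf Fin}_{\leq n}}$. Now both composites $T_n \to T_{{\sf Fin}_{\leq n}} \to T_n$ and $T_{{\sf Fin}_{\leq n}} \to T_n \to T_{{\sf Fin}_{\leq n}}$ are endomorphisms of terminal objects in their respective categories of co-augmented functors, hence equal to the identity. Therefore the two functors are canonically isomorphic, and the induced monad structures agree as well.

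I expect the only real subtlety — and it is minor — to be bookkeeping around the retract argument: one must make sure the retraction $r: n \to m$ genuinely lands in ${\sf Set}$ (which it does, once $m \geq 1$; the edge case $m = 0$, the empty set, is handled separately since $\emptyset$ is initial and $F(\emptyset)$ receives a map from $\emptyset$ but one checks $\eta^F_\emptyset$ is iso directly, or simply notes $\emptyset$ is a retract of $n$ only when $n=0$, and for $n \geq 1$ one instead uses that $F$ preserving $1$ — a retract of $n$ — together with $F$ preserving products, if available; more simply, for the bare co-augmented setting one can just exclude or treat $\emptyset$ by hand as it plays no role in the terminal monad). Everything else is a formal consequence of the universal properties already established in Proposition~\ref{D-completion} and Lemma~\ref{lemma:retract}.
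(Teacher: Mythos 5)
Your argument is essentially the paper's: the paper's proof is the one--line observation that every set of cardinality at most $n$ is a retract of $n$, so the claim follows from the closure of ${\sf Inv}(F)$ under retracts (Lemma \ref{lemma:retract}), exactly the combinatorial heart of your proposal, with your explicit two-way terminality bookkeeping left implicit there. You are in fact slightly more careful than the paper in flagging the $\emptyset$ edge case (the empty set is \emph{not} a retract of $n$ for $n\geq 1$, a point the paper's proof silently glosses over; for $n\geq 2$ one can settle it, e.g., via the explicit formula of Corollary \ref{cor:one_object}, which gives $T_n(\emptyset)=\emptyset$), so the proposal matches the paper's approach and is fine at the same level of rigor.
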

\begin{proof}
Since any set of cardinality at most $n$ is a retract of $n,$ this follows from Lemma \ref{lemma:retract}.
\end{proof}

\begin{proposition}\label{prop:US}
The co-augmented functor of $2$-completion on the category of sets is isomorphic to  $\US$
\begin{equation}
    T_2 \cong \US. 
\end{equation}
\end{proposition}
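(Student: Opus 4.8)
The plan is to show that $\US$ is the terminal co-augmented functor on $\Set$ that fixes the two-element set $2 = \{0,1\}$, and then invoke the uniqueness (up to isomorphism) of such a terminal object guaranteed by Proposition~\ref{D-completion}. There are two things to verify: first that $2 \to \US(2)$ is an isomorphism, i.e. $\US \in {\sf Inv}$-class with $2 \in {\sf Inv}(\US)$; second that $\US$ is terminal among all such co-augmented functors, which amounts to producing, for every co-augmented $F$ with $2 \to F(2)$ an iso, a unique natural transformation $F \to \US$ compatible with co-augmentations.

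\smallskip
\emph{Step 1: $2 \to \US(2)$ is an isomorphism.} One unwinds the definition: an ultraset on $2 = \{0,1\}$ is a collection $A \subseteq \PP(2)$ with $2 \in A$, $\emptyset \notin A$, and exactly one of $Y, 2 \setminus Y$ in $A$ for each $Y$. The nontrivial constraint is on the complementary pair $\{\{0\},\{1\}\}$: exactly one of $\{0\},\{1\}$ lies in $A$. So $\US(2)$ has exactly two elements, and the co-augmentation $\eta_2$ sending $x$ to $\{Y \subseteq 2 : x \in Y\}$ is a bijection onto them. Hence $2 \cong \US(2)$.

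\smallskip
\emph{Step 2: $\US$ is terminal.} Given a co-augmented $F$ with $2 \xrightarrow{\cong} F(2)$, I want a natural $\delta : F \to \US$. The idea is the same as in the double-dual / co-density description: an element of $\US(X)$ is exactly a natural-in-nothing choice, for each map $f : X \to 2$, of an element of $2 = \US(2)$, subject to the ``exactly one of $Y, X\setminus Y$'' coherence. Concretely, given $\xi \in F(X)$, for any $f : X \to 2$ apply $F$ to get $F(f) : F(X) \to F(2) \cong 2$, and define $A_\xi = \{ f^{-1}(1) : f : X \to 2,\ F(f)(\xi) = 1 \} \subseteq \PP(X)$, using $\chi : \PP(X) \cong \Set(X,2)$. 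One then checks $A_\xi$ is an ultraset: axiom (US1) follows since the constant map $X \to 2$ at $0$ is $F$ of a map that factors through $2$ and the co-augmentation forces $F(\text{const}_0)(\xi)$ to equal $\text{const}_0$'s value $0$ (compatibility of $\eta$); axiom (US2) follows since $f$ and its ``swap'' $\sigma \circ f$ (where $\sigma : 2 \to 2$ is the transposition) satisfy $F(\sigma) \circ F(f) = F(\sigma \circ f)$ and $F(\sigma)$ is the transposition on $F(2) \cong 2$, so exactly one of $F(f)(\xi), F(\sigma f)(\xi)$ is $1$. Naturality of $\delta : \xi \mapsto A_\xi$ in $X$ is immediate from $\PP(g)(f^{-1}(1)) = (f g)^{-1}(1)$. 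Compatibility with co-augmentations holds because $F(f)(\eta^F_X(x)) = \eta^F_2(f(x)) = f(x)$, so $A_{\eta^F_X(x)} = \{ f^{-1}(1) : f(x)=1\} = \eta_X(x)$.

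\smallskip
\emph{Step 3: uniqueness of $\delta$.} This is where care is needed, and I expect it to be the main obstacle: one must show any natural $\delta' : F \to \US$ respecting co-augmentations equals the $\delta$ above. The point is that for $\xi \in F(X)$ and $f : X \to 2$, naturality forces $\US(f)(\delta'_X(\xi)) = \delta'_2(F(f)(\xi))$ in $\US(2)$; but $F(f)(\xi) \in F(2) \cong \US(2)$ and $\delta'_2$ must be the identity on $\US(2) \cong 2$ (it respects the co-augmentation $2 \cong \US(2)$, which is the only self-map of the terminal two-element object compatible with $\eta$). Then $\US(f)$ applied to a purported ultraset just asks whether $f^{-1}(1) \in \delta'_X(\xi)$, and this is pinned down to agree with $F(f)(\xi)$ for every $f$, hence $\delta'_X(\xi) = A_\xi = \delta_X(\xi)$. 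The subtlety to get right is that $\US(f) : \US(X) \to \US(2)$ is well-defined only because $\US$ is a genuine sub-functor of $\PP\PP$ (which the excerpt asserts), so membership questions transport correctly along $f$; once that is in hand, uniqueness is forced exactly as in Proposition~\ref{D-completion}. Combining Steps 1--3 with Proposition~\ref{D-completion} applied to $\DD = \{2\}$ gives $T_2 \cong \US$.
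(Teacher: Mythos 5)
Your proposal is correct, but it takes a genuinely different route from the paper's. The paper does not verify the universal property of $\US$ directly: it invokes the explicit equalizer formula of Corollary \ref{cor:one_object}, namely $T_2(X)\cong {\sf eq}\big({\sf Set}({\sf Set}(X,2),2)\rightrightarrows {\sf Set}({\sf Set}(X,2),2)^{{\sf End}(2)}\big)$, identifies ${\sf Set}(X,2)$ with $\PP(X)$ via characteristic functions, and then checks that the equalizing condition for each of the four endomorphisms of $2$ (identity, the two constants, the swap) is exactly (US1), (US1') and (US2); so $T_2$ is first computed from the codensity/end description and then recognized as $\US$. You instead prove from scratch that $\US$ is the terminal co-augmented functor fixing $2$: you construct the comparison map $\xi\mapsto A_\xi$ by evaluating $F$ on characteristic functions, check it lands in ultrasets, and prove uniqueness, which is in effect a hands-on special case of the canonical map into the double dual $d^{\CC(-,d)}$ and its equalizer (compare Lemma \ref{maptoequalizer} and the argument in Theorem \ref{th:powers}); the same analysis of ${\sf End}(2)$ (constants give (US1)/(US1'), the swap gives (US2)) reappears inside your Steps 2--3. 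Your route is more elementary and self-contained, needing no Kan-extension or end machinery; the paper's route buys a very short proof given the general formula, and that formula is reused elsewhere ($T_3$, $T_{d^n}$, the operadic statements). Two points worth tightening in your write-up: in the (US1) check, naturality of $\eta^F$ at $\mathrm{const}_0$ only controls elements in the image of $\eta^F_X$, so the argument should be that $F(e_2):F(2)\to F(2)$ is constant at $\eta^F_2(0)$ because $\eta^F_2$ is surjective, and $\mathrm{const}_0=e_2\circ\mathrm{const}_0$ then pins $F(\mathrm{const}_0)(\xi)$ for every $\xi$ (your phrase ``factors through $2$'' points at this, but the parenthetical appeal to compatibility of $\eta$ alone would not suffice); and since $\US(\emptyset)=\emptyset$, the existence of $\delta_\emptyset$ silently requires $F(\emptyset)=\emptyset$, which is indeed forced (the unique map $\emptyset\to 2$ is fixed by the swap, while $F(\mathrm{swap})$ acts freely on $F(2)\cong 2$), so your construction is consistent, but this degenerate case deserves a sentence.
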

\begin{proof}
By Corollary \ref{cor:one_object} we see 
\begin{equation}
    T_2(X)= 
    {\sf eq}\Big( {\sf Set}( {\sf Set}(X,2),2) 
    \rightrightarrows 
    {\sf Set}( {\sf Set}(X,2),2)^{{\sf End}(2)}\Big).
\end{equation}
The characteristic function defines a bijection
$\PP(X)\cong {\sf Set}(X,2).$
There are four maps $2\to 2:$ (1) the identity map ${\sf id}=e_1;$ (2) the map $e_2$ sending all to $0;$ (3) the map $e_3$ sending all to $1;$ (4) the permutation $e_4.$ The composition with them correspond to four maps on $f_i^X: \PP(X)\to \PP(X):$ (1) $f_1^X(Y)= Y$; (2) $f_2^X(Y)=\emptyset;$ (3) $f_3^X(Y)= X;$ (4) $f_4^X(Y)=X\setminus Y.$
Consider the isomorphism 
\begin{equation}
\PP(\PP(X))\cong {\sf Set}({\sf Set}(X,2),2).    
\end{equation}
So we need to prove that 
\begin{equation}
    \US(X) = {\sf eq}\Big( \PP(\PP(X)) \rightrightarrows \PP(\PP(X))^{{\sf End}(2)} \Big)
\end{equation}
The equaliser consists of such elements $A\in\PP(\PP(X))$  that the equation $f_i^{\PP(X)}(A)=\PP(f^X_i)(A)$ is satisfied for any $i.$ For $i=1$ it is satisfied for any $A$. For $i=2$ we have $f_2^{\PP(X)}(A)=\emptyset$ and
\begin{equation}
\PP(f^X_2)(A)=(f_2^X)^{-1}(A)= \begin{cases}
\PP(X), & \emptyset \in A;\\
\emptyset,& \emptyset \notin A.
\end{cases}
\end{equation}
Then it is satisfied for $i=2$ iff $ \emptyset\notin A$ (axiom (US1)). Similarly, we obtain that the equation is satisfied for $i=3$ iff $X\in A$ (axiom (US1')). For $i=4$ we have that $f_4^{\PP(X)}(A)=\PP(X)\setminus A$ and $\PP(f_4^X)(A)=\{X\setminus Y\mid Y\in A \}.$ Then the equation is satisfied for $i=4$ iff the axiom (US2) is satisfied. 
\end{proof}

\begin{proposition}
Let ${\sf Fin}$ denote the full subcategory of ${\sf Set}$ consisting of finite sets. Then $T_{\sf Fin}$ is isomorphic $T_3$ and isomorphic  to  $\UF$
\begin{equation}
T_{\sf Fin}\cong T_3\cong \UF.
\end{equation}
\end{proposition}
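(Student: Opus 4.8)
The plan is to prove the two isomorphisms $T_{\sf Fin}\cong T_3$ and $T_3\cong \UF$ separately, and in fact the first one is essentially already available. For $T_{\sf Fin}\cong T_3$: every finite set is a retract of some cartesian power $3^k$ (indeed any set of cardinality $\leq 3^k$ injects into $3^k$ with a retraction), and conversely $3$ is itself finite; so ${\sf Inv}(T_3)$ contains all finite sets by Lemma \ref{lemma:retract} applied to retracts of powers of $3$ — here one should note that $T_3$, being a monad, preserves $3^k$ whenever it preserves $3$ (this is the content of Theorems \ref{introdoubleduald}/\ref{introdoubledualdn}, or more simply the fact that the terminal monad preserving $3$ automatically preserves all $3^n$, which is what makes $T_n=T_{{\sf Fin}_{\leq n}}$ stable under further products). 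Since ${\sf Fin}\subseteq {\sf Inv}(T_3)$ and $3\in {\sf Fin}$, the universal properties from Proposition \ref{D-completion} give mutually inverse comparison maps $T_3\to T_{\sf Fin}$ and $T_{\sf Fin}\to T_3$, hence $T_3\cong T_{\sf Fin}$.

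For $T_3\cong \UF$: by Corollary \ref{cor:one_object} applied to $d=3$, $T_3(X)$ is the equalizer of $3^{{\sf Set}(X,3)}\rightrightarrows (3^{{\sf Set}(X,3)})^{{\sf End}(3)}$, which by the same chasing of ${\sf End}(3)$ as in the proof of Proposition \ref{prop:US} identifies $T_3(X)$ with the subset of ${\sf Set}({\sf Set}(X,3),3)\cong$ (maps from the set of $3$-partitions of $X$ to $3$) that are equivariant for the natural ${\sf End}(3)$-action. Unwinding this equivariance: a point of $T_3(X)$ assigns to every ordered partition $X=P_0\sqcup P_1\sqcup P_2$ an index $i\in\{0,1,2\}$, naturally in maps $3\to 3$ — i.e. compatibly with merging blocks and permuting blocks. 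I would then show this data is exactly an ultraset $A$ (via (US1)/(US1')/(US2) recovered from the constant and transposition maps $3\to 3$, exactly as in Proposition \ref{prop:US}) \emph{together with} the extra coherence coming from the genuine three-element maps $3\to 3$ that were not visible for $d=2$. The key claim is that this extra coherence is precisely the partition condition of Lemma \ref{lemma:ultrafilter}: "for every partition $X=P_0\sqcup P_1\sqcup P_2$ there is a unique $i$ with $P_i\in A$." Hence $T_3(X)$ is naturally in bijection with the set of ultrasets satisfying the Lemma \ref{lemma:ultrafilter} criterion, which is $\UF(X)$.

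The main obstacle, and the step deserving the most care, is the combinatorial bookkeeping in identifying the ${\sf End}(3)$-equivariant points with ultrafilters: one must (a) list the surjections and non-surjections $3\to 3$ and record which constraint each imposes on $A\subseteq \PP(\PP(X))$, being careful that $A$ a priori records a \emph{function} on $3$-valued functions $X\to 3$ rather than a subset of $\PP(X)$, so the translation back to "$P_i\in A$" language needs the characteristic-function dictionary used in Proposition \ref{prop:US}; and (b) check that the partition-triple condition of Lemma \ref{lemma:ultrafilter} is both necessary (pull back along the maps $3\to 3$) and sufficient (every equivariant assignment is determined by its values on partitions, and consistency on overlapping partitions forces the ultrafilter axioms). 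Naturality of the bijection in $X$ is then automatic since every map in sight — the equalizer presentation, the characteristic-function isomorphism, and the description of $\UF$ as a subfunctor of $\PP\PP$ — is natural. Finally, one remarks that $T_3$ and $\UF$ agree as co-augmented functors because under the identification the coaugmentation $X\to T_3(X)$ sends $x$ to the assignment "pick the block containing $x$," which is exactly the principal ultrafilter at $x$; combined with the first isomorphism this yields $T_{\sf Fin}\cong T_3\cong \UF$ as claimed.
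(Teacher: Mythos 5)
Your first step, establishing $T_{\sf Fin}\cong T_3$, rests on the claim that the terminal monad preserving $3$ "automatically preserves all powers $3^k$," and this is a genuine gap: it is unjustified and, as a general principle, false. Theorems \ref{introdoubleduald} and \ref{introdoubledualdn} do not say this — they exist precisely because $T_d$ (preserving $d$) and $T_{d^\bullet}$ (preserving all powers $d^n$) are given by different formulas and differ in general — and the paper's own computation $T_2\cong \US$ is a counterexample to your principle: there are ultrasets on a four-element set that are not ultrafilters (on $\{a,b,c,d\}$ take all subsets of size at least $3$ together with $\{a,b\},\{a,c\},\{b,c\}$), so $T_2$ does \emph{not} preserve $2^2=4$. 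Likewise the lemma $T_n=T_{{\sf Fin}_{\leq n}}$ concerns retracts of $n$ itself (sets of cardinality at most $n$), not cartesian powers. It is true that $T_3$ fixes every finite set, but only as a \emph{consequence} of $T_3\cong\UF$; using it to get $T_{\sf Fin}\cong T_3$ before proving the second isomorphism is circular. The repair is to reverse the order: prove $T_3\cong\UF$ first, then deduce $T_{\sf Fin}\cong T_3$ from the standard fact that $\eta\colon K\to\UF(K)$ is an isomorphism for every finite $K$, which is exactly how the paper arranges things.

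Your plan for $T_3\cong\UF$ — compute the equalizer of Corollary \ref{cor:one_object} for $d=3$, identify its points with ${\sf End}(3)$-equivariant maps ${\sf Set}(X,3)\to 3$, recover the ultraset axioms from the maps $3\to 3$ factoring through $2$, and show the genuinely three-valued equivariance gives the partition criterion of Lemma \ref{lemma:ultrafilter} — is viable and differs from the paper's route, but it is only sketched: the bookkeeping over all $27$ endomorphisms of $3$, and the converse direction that every ultrafilter yields an equivariant point (so that the comparison is a bijection, natural and compatible with coaugmentations), is precisely where the content lies and is left at "I would then show." The paper sidesteps this computation entirely: it verifies that $\UF$ has the universal property of $T_3$ by taking an arbitrary co-augmented $F$ fixing all sets of cardinality at most $3$, using the already-established $T_2\cong\US$ to obtain the unique map $F\to\US$, and then showing its image lands in $\UF$ by naturality along the map $\alpha\colon X\to 3$ attached to a three-block partition together with Lemma \ref{lemma:ultrafilter}; uniqueness of $F\to\UF$ is inherited from $\US$. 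Either strategy can be made to work, but as written your proposal has the circularity in part one and an unexecuted core verification in part two.
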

\begin{proof} It is well-known that $\eta: K\to \UF(K)$ is an isomorphism for any finite $K.$ So it is enough to prove that $\UF$ is the terminal among all co-augmented functors $(F,\eta^F)$ such that $\eta^F: K\to F(K)$ is an isomorphism for any set $K$ such that $|K|\leq 3.$ By the universal property of $\US=T_2$ (Proposition \ref{prop:US}) we see that there is a unique morphism of co-augmented functors $\varphi: F\to \US.$ So we just need to prove that for any set $X$ the image of $\varphi_X$ is in $\UF(X).$ Denote by $F'$ the image of $\varphi.$ Note that $F'$ is a co-augmented sub-functor of $\US.$ So we need to prove that $F'(X)\subseteq \UF(X).$ 

Since $\eta:K\to F(K)$ is an isomorphism for finite any $K$ such that $|K|\leq 3$ we see that 
\begin{equation}
\UF(K)=F'(K).
\end{equation}
for any $K$ such that $|K|\leq 3.$ 

Let us prove that $F'(X)\subseteq \UF(X).$ Take an ultraset $A\in F'(X).$ Consider a partition $X=P_1\sqcup P_2 \sqcup P_3.$ Define a map $\alpha:X\to 3$ such that $\alpha^{-1}(i)=P_i.$ The map $F'(\alpha):F'(X)\to F'(3)=\UF(3)$ sends $A$ to $\eta(i_0)\in \UF(3)$ for some $i_0\in \{0,1,2\}.$ Since  $\{i_0\}\in \eta(i_0)$ and for $i\ne i_0$ we have $\{i\}\notin \eta(i_0),$ we obtain that $P_{i_0}=\alpha^{-1}(i_0)\in A$ and $P_i=\alpha^{-1}(i)\notin A$ for $i\ne i_0.$ Therefore the assumption of Lemma  \ref{lemma:ultrafilter} is satisfied, and hence, $A$ is an ultrafilter.   
\end{proof}

\subsection{Examples: Groups and  modules}\label{groupsmodules}
Here we briefly consider the examples alluded to in the first section.  The examples below 
are proved by applying the expression \ref{limitovercat} above. It is rather immediate to see that by taking $\DD\subset \CC$ to be the subcategory of finite groups in the category of all groups, the $\DD-$completion
 $T_\DD$ is canonically  isomorphic to the (discrete!) pro-finite completion  functor on groups.

 Similarly, when $\DD\subset \CC$ is the subcategory of nilpotent groups in the category of all groups. Similarly, for the 
 completion of an $A-$module $M,$ with respect to 
 an ideal $I\subseteq A$ in a ring $A:$ Namely,
 $M\to lim M/I^k M.$

 In  the above example, the fact that 
 $\DD$ is  a large category can be dealt with by noticing that  for each  $A-$module  $M,$ the tower of quotients $ M\to (M/I^kM)_k$ is co-final in the category  $M\downarrow \DD,$
 appearing in \ref{limitovercat}.
 
 In case the ring $A=K$ is a field 
 the usual double dual  functor of a $K-$ vector
 space $V\to V^{**}$ appears as a terminal 
 monad $T_K,$ since the double dual  in \ref{eq:enddual} above is reduce here to 
 $V^{**}.$

\section{Completions and operads} \label{operadiccompletion}

In this section, we continue to assume that $\CC$ is closed under  limits. 

\subsection{Objects with an action of a monoid} Let $M$ be a monoid. An $M$-object in $\CC$ is an object $c$ endowed by a homomorphism of monoids $f^c: M\to {\sf End}(c).$ If $X$ is an $M$-set and $c$ is an $M$-object, we define the \emph{hom-object} over $M$ as an equalizer 
\begin{equation}
    {\sf hom}_M(X,c) = {\sf eq}(\sigma,\tau: c^X \rightrightarrows (c^X)^M),
\end{equation}
where $\sigma_m=c^{f^X(m)}$ and $\tau_m=(f^c(m))^X$ for any $m\in M.$ If $\CC$ is a category of sets, $\hom_M(X,c)$ coincides with the ordinary hom-set in the category of $M$-sets.  

For any  two objects $c,d$ from $\CC$ the hom-set $\CC(c,d)$ has a natural structure of ${\sf End}(d)$-set defined by the composition. Then Corollary \ref{cor:one_object} can be reformulated as
\begin{equation}\label{eq:enddual}
    T_d \cong \hom_{{\sf End}(d)}(\CC(-,d),d).
\end{equation}

\subsection{Objects with an action of an operad}\label{operad}

Let $O$ be an operad (of sets). For an object $c$ we denote by ${\sf O}(c)$ the endomorphism operad of $c,$ whose $n$-th component is  ${\sf O}(c)_n=\CC(c^n,c).$ An $O$-algebra in $\CC$ is an object $c$ endowed by a morphism $f^c:O\to {\sf O}(c).$ If $X$ is an $O$-algebra in the category of sets and $c$ is an $O$-algebra in $\CC,$ we defined the hom-object over $O$ as an equaliser 
\begin{equation}
  \hom_O(X,c)={\sf eq}(\sigma,\tau: c^X \rightrightarrows \prod_{n=0}^\infty (c^{X^n})^{O_n}),
\end{equation}
where $\sigma$ and $\tau$ are defined so that $\sigma_{n,o}=c^{f^X_n(o)}:c^X \to c^{X^n}$ and
\begin{equation}
\tau_{n,o,x_1,\dots,x_n}= f^c_n(o) \circ ({\sf pr}_{x_1},\dots,{\sf pr}_{x_n}): c^X \to c    
\end{equation}
for any $n\geq 0,$ $o\in O_n$ and $x_1,\dots,x_n\in X.$  Here we denote by $({\sf pr}_{x_1},\dots,{\sf pr}_{x_n}):c^X \to c^n$ the morphism with components ${\sf pr}_{x_i}.$ For the special case 
$n=0$ we have $X^n=1=\{0\},$ $c^n=1,$ and $\tau_{0,o}:c^X\to c$ is the composition of $c^X\to 1$ and $f_0^X(o):1\to c.$
Note that 
\begin{equation}
 \sigma_{n,o,x_1,\dots,x_n} = {\sf pr}_{f^X_n(o)(x_1,\dots,x_n)}: c^X \to c.   
\end{equation}

For any $n\geq 0$ we also consider 
\begin{equation}
 \hom^{n}_O(X,c) ={\sf eq}(\sigma_{n},\tau_{n}: c^X \rightrightarrows  (c^{X^n})^{O_n}) 
\end{equation}
and 
\begin{equation}
 \hom^{\leq n}_O(X,c) ={\sf eq}(\sigma_{\leq n},\tau_{\leq n}: c^X \rightrightarrows  \prod_{i=0}^n (c^{X^i})^{O_i}). 
\end{equation}
The projection $\prod_{i=0}^n (c^{X^i})^{O_i}\to \prod_{i=0}^{n-1} (c^{X^i})^{O_i}$ induces a morphism 
\begin{equation}
    \hom^{\leq n}_O(X,c) \longrightarrow \hom^{\leq n-1}_O(X,c).
\end{equation}

Since $\prod_{i=0}^{\infty} (c^{X^i})^{O_i} =\underset{n}\varprojlim\  \prod_{i=0}^n (c^{X^i})^{O_i},$ using that limits commute with limits, we obtain 
\begin{equation}
\hom_O(X,c) = \underset{n}\varprojlim\  \hom_O^{\leq n}(X,c).    
\end{equation}

\subsection{Completion with respect to a power}

For an object $d$ of $\CC$ we denote by ${\sf O}^1(d)$ the suboperad of the endomorphism operad ${\sf O}(d)$ such that 
\begin{equation}
    {\sf O}^+(d)_0=\emptyset, \hspace{1cm} {\sf O}^+(d)_n={\sf O}(d)_n
\end{equation}
for $n\geq 1.$

For any two objects $c,d$ of $\CC$ there is a natural structure  of ${\sf O}^1(d)$-algebra on the set $\CC(c,d):$ 
for any $\alpha:d^n\to d$ we consider
the map 
\begin{equation}
f(\alpha) : \CC(c,d)^n \cong \CC(c,d^n) \xrightarrow{\CC(c,\alpha)} \CC(c,d). 
\end{equation}

For any object $d$ of $\CC$ we consider the functor $T_{d^n}$ of $d^n$-completion. We also consider 
\begin{equation}
T_{d^+}=T_{\{ d^n\mid n\geq 1 \}}, \hspace{1cm}  T_{d^\bullet}=T_{\{ d^n\mid n\geq 0\}}. 
\end{equation}

This subsection is devoted to the proof of the following theorem.

\begin{theorem}\label{th:powers}
Let $\CC$ be a complete category and $d$ be its object. Then for $n\geq 1$, there are isomorphisms of co-augmented functors
\begin{equation}
T_{d^n}\cong \hom_{{\sf O}^+(d)}^n(\CC(-,d),d)\cong \hom_{{\sf O}^+(d)}^{\leq n}(\CC(-,d),d),    
\end{equation}
\begin{equation}
T_{\{1,d^n\}}\cong \hom_{{\sf O}(d)}^{\leq n}(\CC(-,d),d), 
\end{equation}
\begin{equation}
T_{d^+}\cong \hom_{{\sf O}^+(d)}(\CC(-,d),d),
\end{equation}
\begin{equation}
T_{d^\bullet}\cong \hom_{{\sf O}(d)}(\CC(-,d),d),
\end{equation}
where the augmentations of the right-hand functors are induced by  morphism to the element $d$ raised to the power ${\CC(c,d)}$, given by: $\tilde \eta_c: c\to d^{\CC(c,d)}$ with components $(\tilde \eta_c)_\alpha = \alpha.$
\end{theorem}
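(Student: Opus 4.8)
The plan is to verify, for each functor $G$ appearing on the right-hand side, three properties: that $G$ is a co-augmented functor, with co-augmentation the map $\tilde\eta$ of the statement; that $G$ preserves the relevant objects (namely $d^n$ in the first line, the pair $1=d^0$ and $d^n$ in the second, all $d^n$ with $n\ge 1$ in the third, all $d^n$ with $n\ge 0$ in the fourth); and that $G$ is \emph{terminal} among co-augmented functors with that preservation property. By Proposition \ref{D-completion}, together with Lemma \ref{lemma:retract}, this identifies $G$ canonically with the corresponding $\DD$-completion and proves all four lines at once; the middle isomorphism in the first line then comes for free, since both $\hom^n_{{\sf O}^+(d)}(\CC(-,d),d)$ and $\hom^{\le n}_{{\sf O}^+(d)}(\CC(-,d),d)$ will be terminal among co-augmented functors preserving $d^n$. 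Two remarks organize the bookkeeping. First, for $n\ge 1$ the object $d^i$ is a retract of $d^n$ for every $1\le i\le n$ (pad the coordinates using a projection and project back), whereas $1=d^0$ is in general not a retract of $d^n$ — this is why the second and fourth lines carry the extra object $1$. Second, every right-hand functor is a co-augmented sub-functor of $T_d$, the equalizer of Corollary \ref{cor:one_object}: $T_d$ is cut out of $d^{\CC(-,d)}$ by the arity-$1$ (i.e.\ ${\sf End}(d)$-equivariance) equation alone, which is imposed outright by the $\hom^{\le n}$- and $\varprojlim$-variants and is forced in $\hom^n_{{\sf O}^+(d)}$ by applying the arity-$n$ equation to operations of the form $\phi\circ{\sf pr}_1$, $\phi\colon d\to d$.

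First I would check that $\tilde\eta_c\colon c\to d^{\CC(c,d)}$, with components $(\tilde\eta_c)_\alpha=\alpha$, factors through each equalizer in question: substituting the tautological family $\xi_\alpha=\alpha$ into the defining equations $\sigma=\tau$ turns them into instances of associativity of composition in $\CC$ and of the definition of the endomorphism operad ${\sf O}(d)$, so they hold; this also makes the comparison maps among the $\hom$-functors co-augmented. Next, to see that $G=\hom^n_{{\sf O}^+(d)}(\CC(-,d),d)$ preserves $d^n$: projection onto the coordinates indexed by the $n$ structure projections ${\sf pr}_1,\dots,{\sf pr}_n\in\CC(d^n,d)$ defines a map $G(d^n)\to d^n$ which is a retraction of $\tilde\eta_{d^n}$, and the arity-$n$ equalizer condition, applied with inputs $x_i={\sf pr}_i$ and using $\alpha\circ({\sf pr}_1,\dots,{\sf pr}_n)=\alpha$ for every $\alpha\colon d^n\to d$, forces this retraction to be a section as well, so $\tilde\eta_{d^n}\colon d^n\to G(d^n)$ is an isomorphism. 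The $\hom^{\le n}$ and the nullary variants preserve $d^n$ for the same reason — they are sub-functors of $\hom^n$ through which $\tilde\eta_{d^n}$ still factors — and when nullary operations are present the identical argument at $1$, the arity-$0$ equation forcing $\xi_o=o$ for every $o\colon 1\to d$, gives $G(1)\cong 1$. Preservation of the remaining objects ($d^i$ with $i\le n$; the whole families $\{d^n\}_{n\ge 1}$ and $\{d^n\}_{n\ge 0}$) then follows from Lemma \ref{lemma:retract} together with the identities $\hom_{{\sf O}^+(d)}=\varprojlim_n\hom^{\le n}_{{\sf O}^+(d)}$ and $\hom_{{\sf O}(d)}=\varprojlim_n\hom^{\le n}_{{\sf O}(d)}$ noted before the theorem: the transition maps of these towers are isomorphisms on each $d^i$ past stage $i$, so the limits are again $d^i$.

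For terminality, let $F$ be a co-augmented functor preserving the relevant objects; by the retract remark $\eta^F_{d^i}$ is an isomorphism for the relevant $i\ge 1$, and in the nullary case so is $\eta^F_1$. I would define a natural transformation $F\to d^{\CC(-,d)}$ into the naive double dual functor $c\mapsto d^{\CC(c,d)}$ (the one in the remark following Corollary \ref{cor:one_object}) whose value at $c$ has $\alpha$-component $(\eta^F_d)^{-1}\circ F(\alpha)\colon F(c)\to F(d)\xrightarrow{\,\cong\,}d$; naturality of $\eta^F$ for the morphisms $\alpha\colon c\to d$ shows this transformation is co-augmented. The crucial step is that it lands in the equalizer $G$: given $\beta\colon d^n\to d$ and $x_1,\dots,x_n\colon c\to d$, a chase through the naturality squares of $\eta^F$ for the projections ${\sf pr}_i\colon d^n\to d$, for $(x_1,\dots,x_n)\colon c\to d^n$, and for $\beta$ — using the invertibility of $\eta^F_{d^n}$ to pass back and forth — yields
\[
(\eta^F_d)^{-1}\circ F\big(\beta\circ(x_1,\dots,x_n)\big)=\beta\circ\big((\eta^F_d)^{-1}F(x_1),\dots,(\eta^F_d)^{-1}F(x_n)\big),
\]
which is exactly the defining equation of $\hom^n_{{\sf O}^+(d)}$ on the family $\big((\eta^F_d)^{-1}F(\alpha)\big)_\alpha$; the nullary equations are treated the same way, using that $\eta^F_1$ is invertible and $1$ terminal. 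So the map factors through $G\hookrightarrow d^{\CC(-,d)}$. Uniqueness follows because $G$ is a co-augmented sub-functor of $T_d$, which by Corollary \ref{cor:one_object} and Proposition \ref{D-completion} is terminal among co-augmented functors preserving $d$, and $F$ does preserve $d$: the composite $F\to G\hookrightarrow T_d$ is thereby forced, and $G\hookrightarrow T_d$ is a monomorphism. In the two limit cases one first produces the unique maps $F\to\hom^{\le m}_{{\sf O}^+(d)}$ (resp.\ $F\to\hom^{\le m}_{{\sf O}(d)}$) for each $m$ and assembles them.

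I expect the genuine obstacle to be the naturality computation in the terminality step: one must convert the single hypothesis ``$F$ preserves $d^n$'' into the whole system of operadic compatibility equations defining $\hom^n_{{\sf O}^+(d)}(\CC(-,d),d)$, which calls for a careful chase through several naturality squares of $\eta^F$ together with the universal property of products in $\CC$. The remaining delicate points — the passage to the inverse limit in the $d^+$ and $d^\bullet$ cases, and the treatment of nullary operations and of the object $1$ — are routine once the finite, constant-free case is settled.
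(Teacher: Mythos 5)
Your proposal is correct and follows essentially the same route as the paper's proof: the tautological co-augmentation landing in the equalizers, the identity-tuple substitution showing that the right-hand functors preserve $d^n$ (and $1$ in the nullary cases), the naturality chase with $(\eta^F_{d^n})^{-1}$ establishing the operadic equations for the canonical co-augmented map $F\to d^{\CC(-,d)}$, uniqueness via the monomorphism into $T_d$ together with Corollary \ref{cor:one_object} and Proposition \ref{D-completion}, and passage to inverse limits for $T_{d^+}$ and $T_{d^\bullet}$. The only cosmetic deviation is that you deduce $\hom^{n}_{{\sf O}^+(d)}\cong\hom^{\le n}_{{\sf O}^+(d)}$ from terminality of both sides (using the $\phi\circ{\sf pr}_1$ substitution to get the arity-one condition), where the paper proves the arity-reduction Lemma \ref{lemma:n'} directly.
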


\begin{remark} \label{remarkmandel}
    
 An analog and potentially a special case of this formula, within the $\infty$-category of simplicial sets, appears in Mandell's theorem, \cite{Mandellpadichomotopy} and \cite{Mandellthmexposition}, Proposition 4.4. Here, the operadic double-dual appears as a version of homological p-completion. It gives the terminal functor that preserves certain $p$-adic Eilenberg-MacLane spaces.
\end{remark}
$$$$

In order to prove this theorem, we need to prove several lemmas.

\begin{lemma}\label{lemma:phi_equal} For $n\geq 0$ and a morphism $\varphi:e \to d^{\CC(c,d)},$
the diagram 
\begin{equation}
    e \xrightarrow{\varphi} d^{\CC(c,d)} \underset{\tau_n}{\overset{\sigma_n}{\rightrightarrows}} \Big(d^{\CC(c,d)^n}\Big)^{\CC(d^n,d)}
\end{equation}
is commutative ($\tau_n\varphi=\sigma_n\varphi$) if and only if for any morphism $\alpha:d^n\to d$ and any morphism $\beta:c\to d^n$ we have
\begin{equation}
    \varphi_{\alpha \circ \beta} = \alpha \circ (\varphi_{\beta_1},\dots , \varphi_{\beta_n}).
\end{equation}
\end{lemma}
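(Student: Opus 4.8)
The plan is to prove this by a straightforward unwinding of the definitions of $\sigma_n$ and $\tau_n$ from Section~\ref{operad}: I would rewrite the single equation $\tau_n\varphi=\sigma_n\varphi$ componentwise and read off the stated relations.

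First I would record the relevant instance of the constructions. Here $d$ carries the tautological structure over its endomorphism operad ($f^d_n(\alpha)=\alpha$ for $\alpha\in\CC(d^n,d)$), and $X=\CC(c,d)$ carries the action $f^X_n(\alpha)(\beta_1,\dots,\beta_n)=\alpha\circ\langle\beta_1,\dots,\beta_n\rangle$, where $\langle\beta_1,\dots,\beta_n\rangle\colon c\to d^n$ denotes the morphism with components $\beta_i$. Substituting these into the formulas for $\sigma$ and $\tau$ in the definition of $\hom_O$, the component of the codomain product $\big(d^{\CC(c,d)^n}\big)^{\CC(d^n,d)}$ indexed by an operation $\alpha\in\CC(d^n,d)$ and a tuple $(\beta_1,\dots,\beta_n)\in\CC(c,d)^n$ is
\[
\sigma_{n,\alpha,\beta_1,\dots,\beta_n}={\sf pr}_{\alpha\circ\langle\beta_1,\dots,\beta_n\rangle}\colon d^{\CC(c,d)}\to d,\qquad
\tau_{n,\alpha,\beta_1,\dots,\beta_n}=\alpha\circ({\sf pr}_{\beta_1},\dots,{\sf pr}_{\beta_n})\colon d^{\CC(c,d)}\to d.
\]

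Next I would invoke the universal property of products, applied to both nested products: $\tau_n\varphi=\sigma_n\varphi$ holds if and only if $\tau_{n,\alpha,\beta_1,\dots,\beta_n}\circ\varphi=\sigma_{n,\alpha,\beta_1,\dots,\beta_n}\circ\varphi$ for every $\alpha\colon d^n\to d$ and every tuple $(\beta_1,\dots,\beta_n)\in\CC(c,d)^n$. Post-composing $\varphi$ with the relevant projections and using the identity ${\sf pr}_\gamma\circ\varphi=\varphi_\gamma$, this last equation becomes $\alpha\circ(\varphi_{\beta_1},\dots,\varphi_{\beta_n})=\varphi_{\alpha\circ\langle\beta_1,\dots,\beta_n\rangle}$. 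Finally I would note that $\beta\mapsto(\beta_1,\dots,\beta_n)$ with $\beta_i={\sf pr}_i\circ\beta$ is the canonical bijection $\CC(c,d^n)\cong\CC(c,d)^n$, and that $\alpha\circ\langle\beta_1,\dots,\beta_n\rangle=\alpha\circ\beta$ under this correspondence, so quantifying over tuples is the same as quantifying over morphisms $\beta\colon c\to d^n$; this gives exactly the asserted equivalence.

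The calculation is uniform in $n\geq 0$, and I expect no genuine obstacle — only bookkeeping. The one point worth a sentence is the degenerate case $n=0$: there $d^0=1$, the empty tuple corresponds to the unique morphism $c\to 1$, and $\alpha\circ(\varphi_{\beta_1},\dots,\varphi_{\beta_n})$ is to be read as the composite $e\to 1\xrightarrow{\alpha}d$; with the terminal-object conventions of Section~\ref{operad} the same computation still applies. The main care needed is simply to keep the indexing of the two nested products $d^{(-)}$ and $(-)^{\CC(d^n,d)}$ aligned with the explicit formulas for $\sigma_n$ and $\tau_n$.
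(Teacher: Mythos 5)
Your proposal is correct and follows essentially the same route as the paper: both compute the components $\sigma_{n,\alpha,\beta_1,\dots,\beta_n}={\sf pr}_{\alpha\circ\beta}$ and $\tau_{n,\alpha,\beta_1,\dots,\beta_n}=\alpha\circ({\sf pr}_{\beta_1},\dots,{\sf pr}_{\beta_n})$ and conclude by comparing components via the universal property of the products. You merely spell out the bookkeeping (the bijection $\CC(c,d^n)\cong\CC(c,d)^n$ and the $n=0$ case) that the paper compresses into ``The assertion follows.''
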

\begin{proof}
The components of the morphisms $\tau_{n,\alpha},\sigma_{n,\alpha}$ are  
$\sigma_{n,\alpha,\beta_1,\dots,\beta_n}  = {\sf pr}_{\alpha \circ \beta } 
$
and 
$\tau_{n,\alpha,\beta_1,\dots,\beta_n}  = \alpha\circ ({\sf pr}_{\beta_1}, \dots, {\sf pr}_{\beta_n}).
$ The assertion follows. 
\end{proof}

\begin{lemma}\label{lemma:eta_equal} For $n\geq 0$ the diagram 
\begin{equation}
    c \xrightarrow{\tilde \eta_c} d^{\CC(c,d)} \underset{\tau_n}{\overset{\sigma_n}{\rightrightarrows}} \Big(d^{\CC(c,d)^n}\Big)^{\CC(d^n,d)}
\end{equation}
is commutative ($\sigma_n\tilde \eta_c=\tau_n\tilde \eta_c$), where $(\tilde \eta_c)_\alpha=\alpha.$
\end{lemma}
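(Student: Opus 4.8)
The plan is to reduce this to Lemma~\ref{lemma:phi_equal}. Apply that lemma with $e=c$ and $\varphi=\tilde\eta_c:c\to d^{\CC(c,d)}$. It tells us that the displayed diagram commutes if and only if, for every morphism $\alpha:d^n\to d$ and every morphism $\beta:c\to d^n$, one has
\begin{equation*}
(\tilde\eta_c)_{\alpha\circ\beta}=\alpha\circ\big((\tilde\eta_c)_{\beta_1},\dots,(\tilde\eta_c)_{\beta_n}\big),
\end{equation*}
where $\beta_i={\sf pr}_i\circ\beta:c\to d$ are the components of $\beta$. So the whole task is to verify this identity for $\varphi=\tilde\eta_c$.

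This is then immediate from the definition of $\tilde\eta_c$, whose component indexed by a morphism $\gamma:c\to d$ is $(\tilde\eta_c)_\gamma=\gamma$. Taking $\gamma=\alpha\circ\beta$ gives that the left-hand side equals $\alpha\circ\beta$; taking $\gamma=\beta_i$ shows the right-hand side equals $\alpha\circ(\beta_1,\dots,\beta_n)$. Finally, by the very definition of the components of a morphism into a product, the tuple $(\beta_1,\dots,\beta_n):c\to d^n$ is $\beta$ itself, so both sides coincide with $\alpha\circ\beta$ and the identity holds.

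The case $n=0$ deserves a brief separate remark, since there $d^0=1$ is the terminal object and $\CC(c,d)^0=1$: here $\beta:c\to 1$ is the unique morphism, which is also the empty tuple $(\,)$, so $\alpha\circ\beta=\alpha\circ(\,)$ trivially and the identity again holds. I do not expect any real obstacle: once Lemma~\ref{lemma:phi_equal} is available, this is a one-line unwinding of the definition of $\tilde\eta_c$ together with the universal property of the product.
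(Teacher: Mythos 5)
Your proposal is correct and follows exactly the paper's route: the paper also proves this lemma as an immediate consequence of Lemma~\ref{lemma:phi_equal}, with the identity $(\tilde\eta_c)_{\alpha\circ\beta}=\alpha\circ\beta=\alpha\circ((\tilde\eta_c)_{\beta_1},\dots,(\tilde\eta_c)_{\beta_n})$ being the one-line verification you spell out. Your explicit treatment of the $n=0$ case is a harmless extra detail the paper leaves implicit.
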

\begin{proof}
It follows from Lemma \ref{lemma:phi_equal}. 
\end{proof}

\begin{lemma}\label{lemma:equalizer}
For $n\geq 0$ the diagram
\begin{equation}
    d^n \xrightarrow{\tilde \eta_{d^n}} d^{\CC(d^n,d)} \underset{\tau_n}{\overset{\sigma_n}{\rightrightarrows}} \Big(d^{\CC(d^n,d)^n}\Big)^{\CC(d^n,d)}
\end{equation}
is an equalizer.
\end{lemma}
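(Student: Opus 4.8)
The plan is to identify the equalizer of $\sigma_n,\tau_n$ concretely using Lemma \ref{lemma:phi_equal}, and then check that the map $\tilde\eta_{d^n}\colon d^n\to d^{\CC(d^n,d)}$ realizes precisely that equalizer. First I would recall that, by Lemma \ref{lemma:eta_equal} (applied with $c=d^n$), the composite $\sigma_n\tilde\eta_{d^n}$ and $\tau_n\tilde\eta_{d^n}$ already agree, so $\tilde\eta_{d^n}$ factors through the equalizer $E={\sf eq}(\sigma_n,\tau_n)$; call the induced map $\iota\colon d^n\to E$. It remains to produce an inverse, i.e.\ to show $\iota$ is an isomorphism. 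The natural candidate for the inverse is the map $E\to d^n$ whose $i$-th component ($i\in\{1,\dots,n\}$, thinking of $d^n$ as indexed by $\{1,\dots,n\}$) is the projection ${\sf pr}_{{\sf pr}_i}\colon d^{\CC(d^n,d)}\to d$ corresponding to the element ${\sf pr}_i\in\CC(d^n,d)$, restricted to $E$.

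The key computation is then to verify that these two maps are mutually inverse. In one direction, $d^n\xrightarrow{\tilde\eta_{d^n}} d^{\CC(d^n,d)}\xrightarrow{({\sf pr}_{{\sf pr}_1},\dots,{\sf pr}_{{\sf pr}_n})} d^n$ is the identity: the $i$-th component sends $x\mapsto (\tilde\eta_{d^n}(x))_{{\sf pr}_i}={\sf pr}_i(x)$ by the defining formula $(\tilde\eta_c)_\alpha=\alpha$, so the composite is $({\sf pr}_1,\dots,{\sf pr}_n)={\sf id}_{d^n}$. In the other direction one must show that any $\varphi\colon e\to d^{\CC(d^n,d)}$ equalizing $\sigma_n,\tau_n$ is determined by its components $\varphi_{{\sf pr}_1},\dots,\varphi_{{\sf pr}_n}$, and in fact equals $\tilde\eta_{d^n}\circ(\varphi_{{\sf pr}_1},\dots,\varphi_{{\sf pr}_n})$. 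This is where Lemma \ref{lemma:phi_equal} does the work: the equalizer condition says $\varphi_{\alpha\circ\beta}=\alpha\circ(\varphi_{\beta_1},\dots,\varphi_{\beta_n})$ for all $\alpha\colon d^n\to d$ and $\beta\colon c\to d^n$ — here with $c=d^n$. Taking $\beta={\sf id}_{d^n}$ (whose components are $\beta_i={\sf pr}_i$) gives $\varphi_\alpha=\alpha\circ(\varphi_{{\sf pr}_1},\dots,\varphi_{{\sf pr}_n})$ for every $\alpha\in\CC(d^n,d)$, which says exactly that $\varphi=\tilde\eta_{d^n}\circ(\varphi_{{\sf pr}_1},\dots,\varphi_{{\sf pr}_n})$ after unwinding $(\tilde\eta_{d^n})_\alpha=\alpha$ and whiskering. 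Hence every such $\varphi$ factors uniquely through $\tilde\eta_{d^n}$, which is the universal property of the equalizer.

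The main obstacle — and it is a mild one — is bookkeeping: keeping straight the two nested levels of indexing (the outer product over $\CC(d^n,d)$ and the inner identification of $d^n$ with functions out of $\{1,\dots,n\}$, or equivalently with the $n$-tuple of projections ${\sf pr}_i$), and making sure the "components" language of Lemma \ref{lemma:phi_equal} is being applied to the object $d^n$ in place of the generic $c$. Once the substitution $\beta={\sf id}_{d^n}$ is made, the statement is essentially the Yoneda-style observation that a map into $d^n$ is the same as its $n$ projections, compatibly with postcomposition by arbitrary $\alpha\colon d^n\to d$ — and the equalizer cuts out exactly that compatibility. I would write the proof in two short paragraphs: one invoking Lemma \ref{lemma:eta_equal} for the factorization and exhibiting the candidate inverse, and one invoking Lemma \ref{lemma:phi_equal} with $\beta={\sf id}$ to verify the universal property.
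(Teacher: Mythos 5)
Your proposal is correct and follows essentially the same route as the paper: both use Lemma \ref{lemma:eta_equal} to see that $\tilde\eta_{d^n}$ equalizes $\sigma_n,\tau_n$, then apply Lemma \ref{lemma:phi_equal} with $\beta={\sf id}_{d^n}$ to get $\varphi_\alpha=\alpha\circ(\varphi_{{\sf pr}_1},\dots,\varphi_{{\sf pr}_n})$, so that $\psi=(\varphi_{{\sf pr}_1},\dots,\varphi_{{\sf pr}_n})$ gives the factorization, with uniqueness coming from reading off components at the projections ${\sf pr}_i$. Your framing via a two-sided inverse versus the paper's direct verification of the universal property is only a cosmetic difference.
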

\begin{proof} By Lemma \ref{lemma:eta_equal} we have $\sigma_n\tilde \eta = \tau_n\tilde \eta.$
Let  $\varphi: e\to d^{\CC(d^n,d)}$  be a map that equalizes $\sigma_n$ and $\tau_n.$  Lemma \ref{lemma:phi_equal} implies that  for any $\alpha:d^n\to d$ and $\beta:d^n \to d^n$ we have $ \varphi_{\alpha \circ \beta} = \alpha \circ (\varphi_{\beta_1},\dots,\varphi_{\beta_n}).$ 
In particular, if we take $\beta={\sf id}_{d^n},$ we get 
\begin{equation}
\varphi_\alpha = \alpha\circ (\varphi_{{\sf pr}_1}, \dots, \varphi_{{\sf pr}_n}).   
\end{equation}
So, if we take $\psi= (\varphi_{{\sf pr}_1},\dots, \varphi_{{\sf pr}_n}),$ we obtain $ \tilde\eta \circ \psi = \varphi.$ Let us prove that such $\psi$ is unique.  Assume that $\psi:e\to d^n$ is a morphism such that $\tilde \eta \circ \psi = \varphi.$ Then 
$\varphi_\alpha = \alpha \circ \psi.$ It follows that
$\psi_i=\varphi_{{\sf pr}_i}.$
\end{proof}

\begin{lemma}\label{lemma:n'}
For  $1\leq n'\leq n,$ if $\varphi:e\to d^{\CC(c,d)}$ is a morphism such that $\sigma_n \varphi=\tau_n\varphi$, then $\sigma_{n'} \varphi=\tau_{n'}\varphi.$ 
\end{lemma}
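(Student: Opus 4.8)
The plan is to reduce the $n'$-condition to the $n$-condition by restricting attention to maps $\beta : c \to d^{n}$ whose components factor through the first $n'$ coordinates. First I would unwind what $\sigma_{n'}\varphi = \tau_{n'}\varphi$ means via Lemma \ref{lemma:phi_equal}: it says that for every $\alpha' : d^{n'} \to d$ and every $\beta' : c \to d^{n'}$ one has $\varphi_{\alpha' \circ \beta'} = \alpha' \circ (\varphi_{\beta'_1}, \dots, \varphi_{\beta'_{n'}})$. So the goal is to produce, from any such pair $(\alpha',\beta')$, a pair $(\alpha,\beta)$ with $\alpha : d^n \to d$ and $\beta : c \to d^n$ for which the $n$-level identity, which we are given, specializes to exactly the desired $n'$-level identity.

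The key step is the following choice. Given $\alpha' : d^{n'} \to d$, set $\alpha = \alpha' \circ ({\sf pr}_1,\dots,{\sf pr}_{n'}) : d^n \to d$, i.e. precompose with the projection $d^n \to d^{n'}$ onto the first $n'$ factors; and given $\beta' : c \to d^{n'}$ with components $\beta'_1,\dots,\beta'_{n'}$, set $\beta : c \to d^n$ to have components $\beta_i = \beta'_i$ for $1 \le i \le n'$ and $\beta_i = $ (any fixed map $c \to d$, say the one chosen arbitrarily, or if $c$ is not known to admit a map to $d$ one can instead use $n' = n$ padding — but generically a choice exists since we only need \emph{some} map) for $n' < i \le n$. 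Then $\alpha \circ \beta = \alpha' \circ ({\sf pr}_1,\dots,{\sf pr}_{n'}) \circ \beta = \alpha' \circ \beta'$, because $({\sf pr}_1,\dots,{\sf pr}_{n'})\circ \beta = \beta'$ by construction of the components. Applying the hypothesis $\varphi_{\alpha\circ\beta} = \alpha\circ(\varphi_{\beta_1},\dots,\varphi_{\beta_n})$ and using $\alpha = \alpha'\circ({\sf pr}_1,\dots,{\sf pr}_{n'})$ on the right-hand side, the extra components $\varphi_{\beta_i}$ for $i > n'$ are discarded by the projection, yielding $\varphi_{\alpha'\circ\beta'} = \alpha'\circ(\varphi_{\beta'_1},\dots,\varphi_{\beta'_{n'}})$, which by Lemma \ref{lemma:phi_equal} is precisely $\sigma_{n'}\varphi = \tau_{n'}\varphi$.

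The main obstacle I anticipate is the harmless-looking need to "pad" $\beta$ to $n$ components: this requires picking arbitrary maps $c \to d$ for the extra slots, and one should check the argument does not secretly depend on that choice (it does not, since those components are projected away on the right and do not affect $\alpha\circ\beta$ on the left). If one wants to avoid any choice at all, an alternative is to observe that the hypothesis $\sigma_n\varphi=\tau_n\varphi$, read through Lemma \ref{lemma:phi_equal}, already quantifies over \emph{all} $\alpha : d^n \to d$ and $\beta : c\to d^n$, so one may simply take $\beta$ to be $\beta'$ composed with a section-like map $d^{n'}\to d^n$ (the inclusion of the first $n'$ coordinates together with, say, duplicating the last coordinate) and $\alpha$ correspondingly adjusted; this is purely formal once the compatibility $\alpha\circ\beta = \alpha'\circ\beta'$ and the component-bookkeeping on the right are verified, which is routine. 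Either way the lemma follows immediately from Lemma \ref{lemma:phi_equal} by specialization, with no additional hypotheses on $\CC$ beyond completeness.
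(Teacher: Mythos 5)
Your proposal is correct and is essentially the paper's argument: both reduce the claim, via Lemma \ref{lemma:phi_equal}, to checking the component identity for a pair $(\alpha',\beta')$ at level $n'$ by padding $\beta'$ to a map $c\to d^n$ and precomposing $\alpha'$ with the projection ${\sf pr}_{\leq n'}:d^n\to d^{n'}$. The paper phrases this as an induction from $n$ to $n-1$ and pads by duplicating the last coordinate (your stated "alternative"), which also disposes of the choice-of-padding issue you flag; since $\beta'$ itself supplies components $c\to d$, your version with an arbitrary padding map is equally fine.
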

\begin{proof} It is enough to prove for $n'=n-1\geq 1.$
Take morphisms $\alpha:d^{n-1}\to d$ and $ \beta:c\to d^{n-1}.$ Since $d^n=d^{n-1}\times d$ we have a projection ${\sf pr}_{\leq n-1}:d^n\to d^{n-1}$ and we can take a map $\delta :d^{n-1}\to d^n$  such that ${\sf pr}_{\leq n-1} \circ  \delta={\sf id}_{d^{n-1}}$ and ${\sf pr}_n \circ \delta={\sf pr}_{n-1}$. We set $\alpha'=\alpha \circ {\sf pr}_{\leq n-1}:d^n\to d$ and $\beta'=\delta \circ \beta: c \to d^n.$ Then $\alpha\circ \beta = \alpha'\circ \beta';$ $\beta'_i=\beta_i$ for $1\leq i\leq n-1$ and $\beta'_n=\beta_{n-1}.$ Then by the assumption and Lemma \ref{lemma:phi_equal} we have
\begin{equation}
\varphi_{\alpha\circ \beta}=\varphi_{\alpha'\circ \beta'} = \alpha' \circ (\varphi_{\beta'_1},\dots,\varphi_{\beta'_n})=\alpha \circ (\varphi_{\beta_1},\dots,\varphi_{\beta_{n-1}}).    
\end{equation}
The assertion follows. 
\end{proof}

\begin{remark}
Generally the equation $\sigma_n \varphi = \tau_n \varphi$ for $n\geq 1$ does not imply $\sigma_0 \varphi = \tau_0 \varphi.$ The assumption $n'\geq 1$ of Lemma \ref{lemma:n'} is essential. 
\end{remark}

\begin{proof}[Proof of Theorem \ref{th:powers}]
Lemma \ref{lemma:n'} implies that \begin{equation}
\hom_{{\sf O}^+(d)}^n(\CC(-,d),d)\cong \hom_{{\sf O}^+(d)}^{\leq n}(\CC(-,d),d).
\end{equation}
Let us prove $T_{d^n}\cong \hom_{{\sf O}^+(d)}^{\leq n}(\CC(-,d),d).$ By Lemma \ref{lemma:equalizer} we have
\begin{equation}
\eta: d^n\cong \hom_{{\sf O}^+(d)}^{\leq n}(\CC(d^n,d),d). \end{equation}
So we need to prove the universal property.  Consider a co-augmented functor $F$ such that $d^n\in {\sf Inv}(F).$ Since $d$ is a retract of $d^n,$ we have $d\in {\sf Inv}(F).$ Therefore there is a unique morphism of co-augmented functors $\theta:F\to \hom_{{\sf End}(d)}(\CC(-,d),d).$ Taking the composition with the morphism $\hom_{{\sf End}(d)}(\CC(-,d),d) \to d^{\CC(-,d)},$ we obtain a morphism $\varphi:F\to d^{\CC(-,d)}.$ Then, in order to prove that $T_{d^n}\cong \hom_{{\sf O}^+(d)}^n(\CC(-,d),d)$ it is sufficient to prove that $\sigma_{n'} \varphi_c = \tau_{n'} \varphi_c$ for any $c,$ any $1\leq n'\leq n$ and any morphism of co-augmented functors 
$\varphi:F\to d^{\CC(-,d)}.$
By Lemma \ref{lemma:n'} it is enough to prove that $\sigma_{n} \varphi_c = \tau_{n} \varphi_c.$  

Let us prove that $\sigma_n \varphi_c = \tau_n \varphi_c$ for any $c.$  Take $\alpha:d^n\to d$ and $\beta:c\to d^n.$ Note that $\varphi \eta^F=\tilde \eta.$
The commutative diagram 
\begin{equation}\label{eq:diag1}
\begin{tikzcd}
c\ar[d,"\beta"]\ar[r,"\eta_c^F"] & F(c) \ar[d,"F(\beta)"] \ar[r,"\varphi_c"] & d^{\CC(c,d)} \ar[d,"d^{\CC(\beta,d)}"] \ar[r,"{\sf pr}_{\alpha\circ \beta}"] & d\ar[d,"{\sf id}"] \\
d^n \ar[rrr,bend right=25,"\alpha"] \ar[r,"\eta^F_{d^n}","\cong"'] & F(d^n) \ar[r,"\varphi_{d^n}"] & d^{\CC(d^n,d)} \ar[r,"{\sf pr}_\alpha"] & d
\end{tikzcd}
\end{equation}
shows that 
\begin{equation}
    (\varphi_c)_{\alpha \circ \beta} =  \alpha \circ  (\eta_{d^n}^F)^{-1} \circ  F(\beta).
\end{equation}
And the diagram 
\begin{equation}
\begin{tikzcd}
c\ar[d,"\beta"]\ar[r,"\eta_c^F"] & F(c) \ar[d,"F(\beta)"] \ar[r,"\varphi_c"] & d^{\CC(c,d)} \ar[d,"d^{\CC(\beta,d)}"] \ar[r,"{\sf pr}_{\beta_i}"] & d\ar[d,"{\sf id}"] \\
d^n  \ar[rrr,bend right=25,"{\sf pr}_i"] \ar[r,"\eta^F_{d^n}","\cong"'] & F(d^n) \ar[r,"\varphi_{d^n}"] & d^{\CC(d^n,d)} \ar[r,"{\sf pr}_{{\sf pr}_i}"] & d
\end{tikzcd}
\end{equation}
implies that 
\begin{equation}
    ((\eta_{d^n}^F)^{-1} \circ  F(\beta))_i = (\varphi_c)_{\beta_i}.
\end{equation}
Therefore, we have 
\begin{equation}
(\varphi_c)_{\alpha \circ \beta} = \alpha \circ ((\varphi_c)_{\beta_1},\dots, (\varphi_c)_{\beta_n}).
\end{equation}
Then Lemma \ref{lemma:phi_equal} implies that $\sigma_n \varphi_c=\tau_n \varphi_c.$ This implies that $T_{d^n} = \hom^n_{{\sf O}^+(d)}(\CC(-,d),d).$

Now we prove that $T_{\{1,d^n\}}=\hom^{\leq n}_{{\sf O}(d)}(\CC(-,d),d).$ The proof is similar. We just need to note that if $\eta^F_1:1\to F(1)$ is an isomorphism, then for any natural transformation $\varphi:F\to d^{\CC(-,d)},$ any $\alpha:1 \to d$ and $\beta:c\to 1$ we have a diagram similar to \eqref{eq:diag1}.
\begin{equation}
\begin{tikzcd}
c\ar[d,"\beta"]\ar[r,"\eta_c^F"] & F(c) \ar[d,"F(\beta)"] \ar[r,"\varphi_c"] & d^{\CC(c,d)} \ar[d,"d^{\CC(\beta,d)}"] \ar[r,"{\sf pr}_{\alpha\circ \beta}"] & d\ar[d,"{\sf id}"] \\
1 \ar[rrr,bend right=25,"\alpha"] \ar[r,"\eta^F_1","\cong"'] & F(1) \ar[r,"\varphi_1"] & d^{\CC(1,d)} \ar[r,"{\sf pr}_\alpha"] & d
\end{tikzcd}
\end{equation}
This diagram implies that 
\begin{equation}
(\varphi_c)_{\alpha \circ \beta} = \alpha \circ (),
\end{equation}
where $():F(c)\to 1.$ Then Lemma \ref{lemma:phi_equal} implies that $\sigma_0 \varphi_c=\tau_0 \varphi_c$ and the rest of the proof is the same as for $T_{d^n}.$

The fact that $T_{d^+}=\hom_{{\sf O}^+(d)}(\CC(-,d),d)$ follows from the equations $$T_{d^+}= \varprojlim T_{d^n}$$
and $$\hom_{{\sf O}^+(d)}(\CC(-,d),d)= \varprojlim \hom^{\leq n}_{{\sf O}^+(d)}(\CC(-,d),d).$$

Similarly we have $$T_{d^\bullet}=\varprojlim T_{\{1,d^n\}}=\hom_{{\sf O}(d)}(\CC(-,d),d).$$   
\end{proof}
$$$$

\section{An idempotent pro-completion tower}

 We end with a few comments on a pro-idempotent monad $M_\bullet$ associated with a given monad $M.$ Recall from \cite{Dror73} that the Bousfield-Kan $R$-homology completion tower $R_\bullet X,$ associated with a topological space $X,$ is pro-idempotent.  In addition, and  as consequence, its $R-$ homology $H_*(R_\bullet X,R)$ is naturally pro-isomorphic to the homology $H_*(X,R)$ of $X.$

One would like to have a similar result for a general monad $M.$ This is possible, with the price being the replacement of the 
tot-tower $tot_\bullet X,$ with a slightly 
more involved tower defined inductively. This line was considered to with clear results for a a general co-augmented functor 
in the homotopy category of spaces, by A. Libman, compare \cite{libmanuniversal}.
 
For a general subcategory $\DD\subseteq \CC$ of a nice category $\CC,$ one can construct the right Kan extension functor $T_\DD: \CC\to \CC.$ as above. This functor
is not idempotent. However, we can consider a "refined" right extension 
$$T_\DD^{pro}:\CC\to  pro-\DD.$$

This last functor associate, as usual to each $X\in \CC $ a diagram of objects  in $\DD$  indexed by the coma category $\DD_{X/}.$ The limit, in $\CC,$ of this diagram of objects in $\DD$ is the value of 
 right Kan extension, $T_\DD(X),$ on the object $X,$ see equation \ref{limitovercat} above.

Now  for the diagram $X/\DD\to \DD,$ to define a pro-object
it must be filtering. Therefore if we assume that $D$ is closed under finite limits, i.e. pullbacks. In this case, the diagram  $X/\DD\to \DD$ of objects in $\DD$ is filtering. Hence, the the above define diagram $T_\DD^{pro}$ is a pro-object
in $pro-\CC.$ Moreover, the functor can be directly 
prolonged to a functor:

$$ T^{pro}_\DD :pro-\CC\to pro-\CC$$

which deserves the name "tautological pro-completion" with respect to the inclusion $\DD\subseteq \CC.$  As such it is 
clearly pro-idempotent. For example, if $\DD$ is the full subcategory of {\em Groups} consisting of finite groups, one gets the usual diagram $G\to (\Gamma_i)_i$
of finite all finite groups under a given group $G.$ 

\medskip

Our aim is to show that in case $\DD$ is the closure under finite limits of  the image of a monad
$M,$ there is a small variant on the Bousfield-Kan tower associated with $M,$ which is pro-equivalent to this canonical pro-object $T^{pro}_\DD.$ Moreover, that pro-object is the {\em terminal pro-monad} among those that preserve the closure of $Im D$ under finite limits. Note that the image of $M$ is not, in general, closed under finite limits. Therefore, the following construction, which is valid for any $M,$ will be shown to be equivalent to the above $T^{pro}_{\overline{Im M}}$  where the subscript $\overline {Im M}$ denotes the closure of the image of the monad $M$ under finite limits.

$$$$

 Consider the inductively defined  tower of injective maps  of terminal monads:
 
 $$ M_0=M;  M_{i+1}:= T_{M_i}\to M_i\cdots \to M. $$

\medskip

To continue, we notice that under no additional assumptions on $\CC$, one has associated with 
$M$ an idempotent  pro-monad tower, $(M_i)_{i<\omega}.$ The limit of this tower
of  monads is precisely the terminal monad 
that preserves the closure of $Im M,$ the image of our monad $M,$ under all {\em finite} limits.
$$$$

By   \ref{preservelimit}, each $M_i$
in this tower not only preserves $M_j$
for $j<i,$ but also any finite limit of 
objects of the form $M_j(Y_j)$ for $Y_j\in \CC.$ 
It follows that if $X\in \CC$ is in the closure under finite limits of the image of $M,$ then for $i$ large enough, there is an equivalence $X\cong M_i(X).$ Since each $M_i (X)$ is, by construction, an element in the above finite limits closure of 
 $Im M$ in $\CC.$ Therefore we have a pro-idempotent tower:

$$
M_\bullet \cong  M_\bullet\circ  M_\bullet.
$$

In addition, using the argument in \cite{BousfieldKan1973completion} and \cite{CasacubertaFrei} it follows that for every monad $M$ one has a pro-equivalence:

$$ M\cong  M(M_\bullet).$$

In other words, the tower $M_\bullet$ satisfies  some of the basic properties  of the 
classical Bousfield-Kan  $R$-homology
completion tower as given by \cite{Dror73}.

The limit, $M_\infty=lim_iM_i,$ of the tower $M_i$
is the terminal monad among all co-augmented functors that preserve $\overline{Im M},$
the closure of the image of $M$ under finite limits.

$$$$

\begin{remark}
    
 following Fakir, \cite{Fakiridempotentmonads} one can continue to this tower of inclusions,  see \ref{equalizerequaterminal} above, trasfinitely. Under suitable  rather weak assumption on $\CC$ this tower converges to an idempotent monad $L_M.$
This idempotent monad is easily seen to be the terminal monad $T_{\overline{\DD}}$ where now $\overline{\DD}$ denotes the closure of the image of $M$ under {\em all limits.} In the infinity category of spaces
the classical example is the map $L_{HR}\to R_{\infty}$ from the idempotent Bousfield 
homological localization to the $R-$ completion functor on spaces, compare \cite{CasacubertaFrei}.
\end{remark}

{\bf Examples:}
In the category of groups we can
consider the terminal monad $T_G$ associated with a group $G,$ so that $T_G(G)\cong G.$
It is given as above by the double dual

$$T_G(\Gamma)\cong map_{End\,G}(map(\Gamma,G),G).$$ 

This is a subgroup of $G^l$ for $l=|map(\Gamma,G)|,$ the cardinality of the set of homomorphisms.
 For the group of integers $ \Gamma=\zz,$ we have $T_G(\zz)\subseteq G^{|G|}.$ The transfinite tower  of Fakir stabilizes at
$\zz\to L_G(\zz)\cong C,$ where $C$ is a cyclic group whose order is the   LCM of the orders of all elements of $G,$ since the image of the generator of $\zz\to T_G(\zz,)$ is the diagonal element $(g_g)_{g\in G}.$
Namely,  $L_G(\zz)$ is the image  subgroup of $\zz\to T_G(\zz).$  This  $L_G$ localization, or reflexive functor can be characterized as terminal among those that preserve the closure in the category of groups of $\{G\}$ under all limits, mapping $\CC$ into that closure, or the initial idempotent monad that turns every $map(-,G)$-equivalence (i.e. a sort of 'G-cohomology equivalence') into an equivalence.
\break

  {\em An infinity categorical example} see \cite{Regimovmonads}. The classical $R_\infty$
of Bousfield and Kan comes  from the monad
$R$ on the $\infty$-category of topological spaces.
It 
preserves not only spaces of the form $RX,$
i.e. $R-$GEMs but also $R$-polyGEM spaces i.e.  the closure of  $R-$GEMs under finite limits. In this special case the construction of the terminal $R_\infty$
is somewhat simpler than the above inductive
tower $(M_i)_i.$
\\
The  precise meaning or value of this tower for the (discrete)
pro-finite completion of groups, considered as a monad $\MM(G)=\widehat G,$   is not immediately clear. For every group $G,$ the monad
$\MM_\infty G$ is a natural subgroup of the pro-finite completion $\widehat G$ of $G;$ with the property that it is idempotent  ($\MM_\infty \MM_\infty (G)\cong \MM_\infty(G))$ if
$G$ is a finite limit of   (discrete) profinite groups.  In fact, since $\MM_\infty$ preserves all finite limits in  $Im \MM,$ see above, we have that it preserves  any  finite limit of profinite groups.  Note, however, that for a finitely presented group $G,$ one has an isomorphism, $\MM_\infty G=\widehat G,$ since for such a group one has an isomorphism  $\widehat G\cong \widehat{\widehat G},$ namely, the completion  $\MM=\widehat {(-)}$ is   idempotent on this subcategory of groups. The transfinite intersection or  limit of $lim_\alpha \MM_\alpha$ on all ordinals is also  an interesting subgroup of the pro-finite completion, which is just  $\widehat G$ itself if $G$
is finitely presented.

\printbibliography

@article{loregian2015coend,
  title={Coend calculus},
  author={Loregian, Fosco},
  journal={arXiv preprint arXiv:1501.02503},
  year={2015},
}

@book{mac2013categories,
  title={Categories for the working mathematician},
  author={Mac Lane, Saunders},
  volume={5},
  year={2013},
  publisher={Springer Science \& Business Media},
}

@book{FrancisBorceuxcategorical,
  title={Handbook of Categorical Algebra, volume 1 of Encyclopedia of Mathematics and its Applications.},
  author={Francis Borceux},
  number={Vol. 1},
  year={1994},
  publisher={Cambridge university press},
}

@book{BousfieldKan1973completion,
  title={ Homotopy limits, completions, and localization.},
  author={A.K. Bousfield and D. Kan},
  number={LNM 304},
  year={1973},
  publisher={Springer},
}

@article{Fakiridempotentmonads,
  title={Monade idempotente associée à une monade},
  author={S. Fakir},
  journal={C. R. Acad. Sci. Paris Ser. A-B 270 , A99-A101. (gallica)},
  year={1970},
}

@article{CasacubertaFrei,
  title={Localizations as idempotent approximations to completions},
  author={C. Casacouberta, A. Frei},
  journal={JPAA Vol. 142},
  year={1999},
}

@article{Regimovmonads,
  title={Terminal Monads },
  author={G.Kapon and  S. Regimov},
  journal={pre-print},
  year={2022},
}

@article{TomLeinster,
  title={Codensity and the ultrafilter monad},
  author={Tom Leinster},
  journal={Theory and Applications of Categories, 28(13):332–370},
  year={2022},
}

@article{Adamekcolimits,
  title={Colimits of Monads},
  journal={arxiv.org/abs/1409.3805v1},
  author={J. Adamek},
  year={2014},
}

@article{Ivandeliberticodensity,
  title={Codensity: Isbell duality, pro-objects, compactness and accessibility },
  author={Ivan de Liberti},
  journal={arXiv:1910.01014},
  year={2020},
}

@article{adamek2021d,
  title={D-ultrafilters and their monads},
  author={Ad{\'a}mek, J. and Sousa, L.},
  journal={Advances in Mathematics},
  volume={377},
  pages={107486},
  year={2021},
  publisher={Elsevier},
}

@article{Dror73,
  title={Pro-nipotent representation of homology types},
  author={E. Dror (Farjoun)},
  journal={proceedings of the AMS},
  volume={38},
  pages={657-660},
  year={1973},
  publisher={Elsevier},
}

@article{fixedpointnlab,
title={Fixed point-examples in Category theory},
author={nLab},
journal={ncatlab},
}

@article{nlabidempotentmonad,
title={idempotent+monad},
author={nlab},
journal={ncatlab.org/nlab/show/idempotent+monad)},
}

@article{libmanuniversal,
title={Universal spaces for homotopy limits of modules over coaugmented functors (I)
},
volume={42},
  pages={555-568},
author={A. Libman},
journal={Topology},
year={2003},
}

@article{Mandellpadichomotopy,
  title={$E_\infty$-algebras and p-adic homotopy theory.},
  author={M.A. Mandell},
  journal={Topology},
  volume={40},
  pages={43-94},
  year={2001},
  publisher={Elsevier},
}

@article{Mandellthmexposition,
  title={$E_\infty$-algebras and Mandell’s theorem},
  author={A. Berglund},
  pages={1-37},
  year={2016},
 }

@article{Yanovskimonadictower,
  title={The monadic tower for $\infty-$categories.},
  author={L. Yanovski},
  journal={JPAA},
  volume={226, No 6},
  year={2022},
 }
\end{document}